\numberwithin{equation}{section}
\theoremstyle{plain}
\newtheorem{thm}{Theorem}[section]
\newtheorem{lem}[thm]{Lemma}
\newtheorem{prop}[thm]{Proposition}
\newtheorem{cor}[thm]{Corollaray}
\newtheorem{ques}[thm]{Question}
\theoremstyle{definition}
\newtheorem{defi}[thm]{Definition}
\newtheorem{rem}[thm]{Remark}
\begin{document}
\title
  [Chemical distance exponent]
  {\large On the chemical distance exponent for the two-sided level-set of the 2D Gaussian free field}
\author{Yifan Gao and Fuxi Zhang}

\address
{Yifan Gao\\
School of Mathematical Sciences\\
Peking University\\
Beijing, China, 100871}
\email{gaoyif@pku.edu.cn}

\address
{Fuxi Zhang\\
	School of Mathematical Sciences\\
	Peking University\\
	Beijing, China, 100871}
\email{zhangfxi@math.pku.edu.cn}

\subjclass[2010]{Primary 60K35, 60G60.}

\keywords{Gaussian free field, percolation, chemical distance.}

\begin{abstract}
	In this paper we introduce the two-sided level-set for the two-dimensional discrete Gaussian free field. Then we investigate the chemical distance for the two-sided level-set percolation. Our result shows that the chemical distance should have dimension strictly larger than $1$, which in turn stimulates some tempting questions about the two-sided level-set.
\end{abstract}

\maketitle

\section{Introduction}

The discrete Gaussian free field (DGFF) in $\mathbb Z^d, d\ge 3$ is a Gaussian random field with mean zero and covariance given by the Green's function. As a ``strongly'' correlated random field, the
level-set percolation for the DGFF in three dimensions or higher has been extensively studied and
shown to exhibit a non-trivial phase transition by a series of work \cite{MR914444,MR3053773,MR3339867,MR3843421,duminil2020equality}. More precisely, there exists a critical level $0<h_*(d)<\infty$ such that if $h<h_*(d)$, the level-set (a.k.a. excursion set, the random set of points whose value is greater than or equal to $h$) has a unique infinite cluster; if $h>h_*(d)$, the level-set has only finite clusters. 

In this paper, we focus on the two-dimensional DGFF (also called harmonic crystal). However, in two dimensions, we can not define the DGFF in the whole discrete plane since the two-dimensional Green's function blows up, while one can take the scaling limit (the lattice spacing is sent to $0$ while the domain is fixed) to get the continuum Gaussian free field. It is then not possible to investigate the level-set percolation in $\mathbb Z^2$ directly as that in $\mathbb Z^d, d\ge 3$. The right way is to take a big discrete box $V_N$ of side length $N$ and define the two-dimensional DGFF on $V_N$ as a centered Gaussian process with covariance given by the Green's function on $V_N$. Then one can study the connectivity properties of the level-set on $V_N$ as $N$ goes to infinity. However, as shown in \cite{MR3800790}, for any level $h\in\mathbb R$, the level-set above $h$ crosses a macroscopic annulus in $V_N$ with non-vanishing probability as $N$ goes to infinity, which suggests that in some sense there is no non-trivial phase transition for the two-dimensional level-set percolation. Furthermore, the chemical distance (intrinsic distance) between two boundaries of a macroscopic annulus is bounded from above by $N(\log N)^{1/4}$ \cite{MR3800790,MR4112719}. Roughly speaking, the chemical distance has dimension $1$. This inspires us to think that once we truncate the DGFF from two sides to get the so-called two-sided level-set in this paper, whether the chemical distance will be of dimension strictly larger than $1$. Our main result below answers this affirmatively in the sense that if there exists macroscopic (nearest-neighbor) path inside the two-sided level-set, its length must be greater than $N^{1+\varepsilon}$ for some $\varepsilon>0$. Although at present we are not able to show that there could exist some macroscopic path inside the two-sided level-set (This is not obvious, see Question~\ref{que:q1} below), our result gives some expected fractal structure for the two-sided level-set, which is drastically different from the (one-sided) level-set.

Next, we introduce our model and then state our main result. 
For each positive integer $N$, let {$V_{N}=[-N/2,N/2]^2\cap \mathbb Z^2$}. Denote by $\{ \eta^{V_{2N}}(v): v\in V_{2N} \}$ the discrete Gaussian free field (DGFF) on $V_{2N}$ with Dirichlet boundary conditions, which is a mean-zero Gaussian process, vanishing on the boundary, with covariance given by
\[
\mathbb E \eta^{V_{2N}} (u) \eta^{V_{2N}} (v) = G_{2N} (u,v) \quad \text{ for } u,v \in V_{2N},
\]
where $G_{2N}(u,v)$ is the Green's function of the two-dimensional simple random walk in $V_{2N}$. As usual, we restrict ourselves to consider the DGFF $\eta^{V_{2N}}$ on $V_N$ to avoid boundary issues (This can be made more general, see Remark~\ref{rem:boundary-issue}). Suppose $\lambda > 0$. Let
\begin{equation}
{\Lambda_{N,\lambda}} : = \{ v\in V_N: |\eta^{V_{2N}}| \le \lambda  \}.
\end{equation}
We say that a vertex $v\in V_N$ is $\lambda$-open if $\left|\eta^{V_{2N}}(v)\right|\le\lambda$, and interpret $\Lambda_{N, \lambda}$ as the ``two-sided'' level set. Let
 \begin{equation} \label{Eq.defnPke}
{\mathcal{P}_{N}^{\kappa, \epsilon} }= \left\{P: P \text { is a path in } V_{N},\|P\| \geq \kappa N, \text { and }|P| \leq N^{1+\epsilon}\right\},
 \end{equation}
where $\|P\|=\|x-y\|$ if $P$ is a path from $x$ to $y$ and $|P|$ is the length of $P$. We say that $P$ is $\lambda$-open if so is every vertex in $P$. Our main result is the following theorem.

\begin{thm}\label{thm:1.1}
	For each $\lambda>0$, there exists  $\epsilon=\epsilon(\lambda)>0$ such that for every $\kappa\in (0,{1})$,

\begin{equation}\label{eq:complement-event}
\lim _{N \rightarrow \infty}\mathbb{P}\big( P \text { is } \lambda\text{-open for some } P \in {\mathcal{P}_{N}^{\kappa, \epsilon} }\big)=0.
\end{equation}
\end{thm}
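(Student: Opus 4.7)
The plan is a multiscale coarse-graining argument combined with an entropy-versus-probability estimate. The underlying intuition is that the two-sided constraint $|\eta|\le\lambda$ is far more restrictive than its one-sided counterpart: since the DGFF has $\sqrt{\log N}$-scale fluctuations, the coarse-grained field on any mesoscopic sub-scale is typically much larger than $\lambda$ in magnitude, so at every vertex of a $\lambda$-open path the fine-scale fluctuations must cancel the coarse field---an atypical event. Enforcing this over a macroscopic path should cost enough probability to beat the combinatorial entropy of paths, provided $|P|\le N^{1+\epsilon}$ for a suitably small $\epsilon$.

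Concretely, I would partition $V_N$ into disjoint sub-boxes $\{B_i\}$ of side length $r=N^{1-\delta}$ for a small parameter $\delta=\delta(\epsilon,\lambda)>0$ chosen at the end. By the domain Markov property, decompose $\eta^{V_{2N}}=\phi+\psi$, where $\phi$ is the harmonic extension inside each $B_i$ of the boundary values $\eta^{V_{2N}}|_{\partial B_i}$, and conditionally on $\phi$ the fields $\psi|_{B_i}$ are independent zero-boundary DGFFs on the $B_i$. Using $\operatorname{Var}(\phi(v))=\operatorname{Var}(\eta(v))-\operatorname{Var}(\psi(v))\asymp\tfrac{2}{\pi}\log(N/r)\asymp\delta\log N$ at interior points, the coarse field has fluctuations of order $\sqrt{\delta\log N}\to\infty$. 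Call a sub-box $B_i$ \emph{bad} if $|\phi|>A\lambda$ on most of $B_i$ for a sufficiently large constant $A=A(\lambda)$; a first-moment computation shows that with overwhelming probability all but an $o(1)$-fraction of sub-boxes are bad.

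For any $P\in\mathcal{P}_{N}^{\kappa,\epsilon}$, the diameter constraint $\|P\|\ge\kappa N$ forces $P$ to enter at least $\Omega(N^\delta)$ distinct sub-boxes, a positive fraction of which are bad. Inside each bad $B_i$, being $\lambda$-open on $P\cap B_i$ requires $\psi(v)\in[-\phi(v)-\lambda,-\phi(v)+\lambda]$ for every $v\in P\cap B_i$, pinning $\psi$ to a narrow window far from its typical value. Picking a subset of well-separated probes $v_{i,1},\dots,v_{i,k_i}$ in $P\cap B_i$ at pairwise distance at least some intermediate $\rho$, and conditioning iteratively via the Markov property of $\psi$ inside $B_i$, the conditional variance of $\psi$ at each successive probe remains $\gtrsim\log\rho$, so each probe contributes a multiplicative factor $\lesssim\lambda/\sqrt{\log\rho}$ in the conditional probability. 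Multiplying these factors across all bad sub-boxes visited by $P$ and then averaging over $\phi$ yields a quantitative upper bound on $\mathbb{P}(P\text{ is }\lambda\text{-open}\mid P\text{ visits a prescribed list of sub-boxes})$.

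The main obstacle will be the entropy side: a naive union bound over the $4^{|P|}\le 4^{N^{1+\epsilon}}$ microscopic paths is hopeless. My approach would be to coarse-grain the path description, encoding $P$ by the ordered sequence of sub-boxes it visits together with the entry/exit points on each sub-box boundary; since consecutive sub-boxes must be adjacent, each coarse step adds only $O(1)$ topological choices and $O(r)$ boundary-crossing locations, giving entropy $O(\log r)$ per coarse step rather than $O(|P|)$. Choosing $\delta$ somewhat larger than $\epsilon$ so that the number of bad sub-boxes visited dominates the coarse entropy, while keeping $\delta$ small enough that each probe's Gaussian cost $\lambda/\sqrt{\log\rho}$ is comfortably bounded away from $1$, the probabilistic gain from $\Omega(N^\delta)$ bad sub-boxes should dominate. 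The most delicate step is controlling paths that wiggle densely across many sub-box boundaries so that each sub-box visit is short, for which one may need to iterate the decomposition across a nested hierarchy of scales $r_0<r_1<\cdots$; the other technical heart of the argument is verifying, via the Markov property of $\psi$ at successively finer scales, that correlations between probes within a single sub-box do not inflate the Gaussian cost enough to wipe out the gain.
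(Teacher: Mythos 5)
Your high-level intuition -- that the coarse field at a mesoscopic scale has $\sqrt{\log}$ fluctuations and forces the fine field to cancel it pointwise along the path, and that this should beat path entropy when $|P|\le N^{1+\epsilon}$ -- is indeed the driving idea of the paper. But the way you propose to execute it has two gaps that the paper's machinery is specifically designed to fill, and neither of them is a routine technicality.

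First, the ``probe'' argument requires knowing where the path sits inside a bad sub-box. You propose to bound the probability that $\psi$ is pinned at well-separated points $v_{i,1},\dots,v_{i,k_i}\in P\cap B_i$, but these points are chosen from the random path $P$, and a union bound over all possible probe configurations inside a sub-box re-introduces the microscopic entropy you set out to avoid. The paper sidesteps this with a \emph{contour argument} (Lemma~\ref{lem:para-crossing}, following \cite{MR3800790}): rather than conditioning on probes along a path, one observes that any tame crossing of a sub-box forces a crossing of a parallelogram with aspect ratio $O(K)$, slices that into $O(\sqrt K)$ sub-parallelograms, and applies an exploration/outermost-contour argument together with the Markov property inside each. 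This gives a bound on the probability of \emph{any} open crossing, uniformly over all microscopic realizations of the path -- no probes, no per-path union bound within a box.

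Second, the ``bad box'' dichotomy is not robust to paths navigating the level set of the coarse field. You say a box is bad if $|\phi|>A\lambda$ ``on most of $B_i$'', but a harmonic extension of DGFF boundary data is not close to constant on $B_i$: near $\partial B_i$ it matches $\eta|_{\partial B_i}$ and it changes sign along level lines of fractal dimension $<2$. So the set $\{|\phi|\le A\lambda\}$ is typically non-empty in \emph{every} sub-box, and an open path could in principle thread through it. Consequently the claim ``a positive fraction of the sub-boxes visited by $P$ are bad'' does not follow from the first-moment computation that most sub-boxes are bad -- $P$ could preferentially visit the few that are not, or wind through the $\{|\phi|\approx0\}$ region of each. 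What actually rules this out is the geometric input from \cite{MR3947326}: the hierarchical tree $\mathcal{T}_P$ and the quantitative statement (Lemma~\ref{lem:untame-flow}) that for $|P|\le N^{1+\delta/(K^2k)}$, only an $O(\delta)$-fraction of the flow passes through untamed nodes. Tame nodes look like straight lines, and a straight line cannot follow a level set of $\phi$; combining this with the contour argument at each scale via the induction of Theorems~\ref{thm:r=1}--\ref{thm:xi-bound} closes the argument. You flag ``paths that wiggle densely'' as the most delicate step and propose iterating the decomposition over nested scales, which is the right instinct -- but that iteration, together with a replacement for the probe step, is essentially the entire content of Sections~\ref{sec:3-open-path} and~\ref{sec:Multi-scale analysis}, and would need to be worked out to have a proof.
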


\begin{rem}\label{rem:lambda_0}\label{rem:boundary-issue}
	The choice of working with $V_{2N}$ is for convenience; the above theorem holds with { $V_{2N}$ and $V_{N}$ being respectively replaced with $V_{N}$ and $V_{\delta N}$} for any fixed $0<\delta<1$. 
\end{rem}

\begin{rem}
	Theorem~\ref{thm:1.1} also holds for the Gaussian free field on metric graphs since percolation on the metric graph is dominated by the percolation on the integer lattice.
\end{rem}

\begin{rem}
	Note that it suffices to show \eqref{eq:complement-event} holds for large $\lambda$ since the event is increasing in $\lambda$. In fact, we also obtain some quantitive results, namely, it is able to take $\varepsilon(\lambda)=e^{-a\lambda^2}$ for some absolute constant $a>0$ (see \eqref{eq:eps-lambd}), and the probability in \eqref{eq:complement-event} decays faster than $N^{-c\lambda^{-2}}$ for some absolute constant $c>0$ (see \eqref{eq:decay-rate}).
\end{rem}

\begin{rem}
	Furthermore, our method is still effective if $\lambda$ depends on $N$. For example, taking $\lambda=\lambda_N=\sqrt{(2a)^{-1}\log\log N}$, then Theorem~\ref{thm:1.1} shows that for any $\lambda_N$-open path with macroscopic distance, its length should be at least $N(\log N)^{1/2}$. On the other hand, it has been shown in \cite[Theorem 2]{MR1880237} that the maximum of the DGFF is at most $2\sqrt{\frac{2}{\pi}}\log N$ with probability tending to $1$, see \cite{MR4043225} for more about the level-set at heights proportional to the absolute maximum. By symmetry, we see that if we take $\lambda_N=2\sqrt{\frac{2}{\pi}}\log N$, then all points are $\lambda_N$-open with overwhelming probability. Our result stimulates the tempting question of determining the borderline with respect to $\lambda_N$ for linear growth of the chemical distance.
\end{rem}

\begin{ques}\label{que:q1}
	Does there exist a large constant $\lambda>0$ such that with non-vanishing probability there exists a $\lambda$-open path $P$ in $V_N$ with $\|P\| \geq \kappa N$? 
\end{ques}
Let $p(\lambda, N)$ denote the probability of the above event.
Here, we use non-vanishing to mean that $\inf_{N}p(\lambda, N)>0$. It is readily to see that if $\lambda$ is sufficiently small such that $\rho:=\mathbb P(|Z(0,4)|\le\lambda)<\frac14$ where $Z(0,4)$ is a Gaussian random variable with mean $0$ and variance $4$, then $p(\lambda, N)\le N^2(4\rho)^{\kappa N}$ which vanishes exponentially fast in $N$. However, to show it is not the case for large constant $\lambda$ is quite non-trivial. To the best of our knowledge, this question has not been answered yet. We expect that there exists a non-trivial ``phase transition'' for the two-sided level-set percolation. We would like to mention that recently the authors in \cite{ding2020crossing} show that the probability for (one-sided) level-set crossing a rectangle is bounded away from $0$ and $1$.
Another closely related work to this direction is \cite{MR3163210}, in which the ``two-sided'' level-set in $\mathbb Z^d, d\ge 3$ is defined as the random set of points whose absolute value is larger than $h$ (note that this is contrary to our convention for the two-sided level-set), and the associated critical value is proved to be finite for all $d\ge 3$. 

\begin{ques}
	Is there a way to take a scaling limit of the two-sided level-set?
\end{ques}
This might be reminiscent of the Schramm-Sheffield contour line of DGFF in \cite{MR2486487}, which is shown to converge in distribution to $\mathrm{SLE}_4$, as well as the bounded-type thin local sets (BTLS) constructed in \cite{MR3936643} and the first passage set (FPS) introduced in \cite{MR4091511}. Dynkin’s isomorphism enables us to relate the absolute value of the DGFF to the occupation field generated by the random walk loop soup (see \cite{MR2815763,MR3502602}), so all the above questions can be understood in terms of the loop-soup percolation with respect to the occupation field (see \cite{MR2979861,MR3547746,MR3941462} for more related works about the loop-soup). We hope to find some appealing connections between the two-sided level-set and the objects we mentioned.

\subsection{Background}\label{subsec:background}
 The two-dimensional Gaussian free field (GFF) is an important object in statistical physics and the theory of random surfaces \cite{MR2322706}. As {the analog of the Brownian motion with two-dimensional time parameter}, it demonstrates fractal structures in many aspects \cite{MR2486487,MR3947326,MR4019914,MR4076090}. From the perspective of the level-set percolation of the DGFF, we will focus on the chemical distance, which plays a crucial role in {the study of} the fractal structure of clusters in the theory of percolation.

Roughly speaking, the chemical distance is the graph distance on the induced (random) subgraph in some probability models. For instance, the chemical distance for classic percolation models can be defined as the length of the shortest path inside open clusters \cite{MR750568,havlin1985chemical}. However, estimating the chemical distance is quite difficult, which often requires subtle analysis of the structure of the shortest path. Especially, for the two-dimensional critical Bernoulli percolation model, physicists expect that there exists an exponent $d_{\mathrm{min}}$  such that
\begin{equation}
\ell\sim r^{d_{\mathrm{min}}},
\end{equation}
where $r$ and $\ell$ are respectively the Euclidean distance and the chemical distance between two vertices $x$ and $y$, and only the case $r < \infty$ is considered. However, a rigorous way to clarify the equivalence above, i.e., the precise meaning of ``$\sim$", remains to be an open problem \cite{MR2334202,MR3698744}.
It is expected in the physics community that $d_{\mathrm{min}}$ is universal in the sense that it does not depend on the choice of vertices and the type of lattice. In \cite{MR1712629}, Aizenman and Burchard show that $\ell\ge r^{\eta}$ for some $\eta>1$, which implies that $d_{\mathrm{min}}>1$. {Furthermore}, upper bounds on the chemical distance can be obtained by comparing the shortest horizontal crossing with the lowest crossing \cite{MR3698744,damron2017strict}. Specifically, for the critical Bernoulli bond percolation on the edges of a box of side length $n$, the expectation of chemical distance between the left and right sides of the box is $O(n^{2-\delta}\pi_3(n))$ for some $\delta>0$, where $\pi_3(n)$ is the three-arm probability to distance $n$ \cite{damron2017strict}.

Additionally, in the subcritical and supercritical cases of Bernoulli percolation of dimension $d\ge2$, chemical distance is comparable to Euclidean distance \cite{MR762034,MR1404543,MR1068308,MR2319709}. In the critical case in high dimensions, it is shown that macroscopic connecting paths have dimension $2$ \cite{MR2551766,MR2748397,MR3224297}.

We next turn to some correlated percolation models, which have been intensively studied recently \cite{biskup2004scaling,MR2915665,MR3692311,MR3800790}. In the supercritical case for a general class of percolation models on $\mathbb Z^d$ {($d\ge3$)}, with long-range correlations (e.g., the random interlacements, the vacant set of random interlacements, the level sets of the GFF), the chemical distance behaves linearly as in the case of Bernoulli percolation; {see \cite{MR3390739} for details. However, the methods developed in three dimensions and higher are invalid in two dimensions, since the two-dimensional DGFF is log-correlated. Thus it becomes quite complicated when we consider the level-set percolation of the DGFF in two dimensions. Recently, {it is shown in \cite{MR3800790} that for level-set percolation of the two-dimensional  DGFF, the associated chemical distance between two boundaries of a macroscopic annulus is {$O(Ne^{(\log N)^{\alpha}})$} for any $\alpha>1/2$ with positive probability. Later, the order is improved to $O(N(\log N)^{1/4})$ with high probability, on metric graphs, given connectivity  \cite{MR4112719}.}  Note that these two results imply that the undetermined chemical distance exponent for level sets of the two-dimensional DGFF is expected to be $1$ in any phase.

In this paper, we investigate the two-sided level-set cluster of the DGFF. By Theorem~\ref{thm:1.1}, two vertices $x$ and $y$ has chemical distance $\ge N^{1+\epsilon}$, provided that they are connected. Then it will indicate the fractal structure of the two-sided level-set clusters, contrasting to the afore-mentioned case of no fractality of the level-set clusters.

\subsection{Notation conventions}\label{subsec:notations}
For the sake of the reader, we list some notations here.

For $x=(x_1,x_2), y=(y_1,y_2)\in\mathbb R^2$, let
\[
\|x-y\|=\sqrt{|x_1-y_1|^2+|x_2-y_2|^2} \ \text{ and }  \
|x-y|_{\infty}=|x_1-y_1|\wedge|x_2-y_2| 
\]
Let $d(x,B)=\inf_{y\in B}\|x-y\|$ and $d(B_1,B_2)=\inf_{x\in B_1}d(x,B_2)$. Similarly, we define $ d_{\infty}(x,B)=\inf_{y\in B}|x-y|_{\infty}$ and $d_{\infty}(B_1,B_2)=\inf_{x\in B_1}d_{\infty}(x,B_2)$. 

For $x\in\mathbb R^2$ and $\ell>0$, let
\[
B(x,\ell)=\{ y\in \mathbb R^2:\|x-y\|\le \ell \} \ \text{ and }  \
B_{\infty}(x,\ell)=\{ y\in \mathbb R^2: |x-y|_\infty \le \ell \}.
\]
Denote
\[
V_{\ell}(x)=B_{\infty}(x,\ell/2) \cap\mathbb Z^2.
\]

For $a\in\mathbb R$, let $\lfloor a\rfloor$ be the greatest integer that is at most $a$. For $r\ge 1$, let $[r]=\{1,\cdots,\lfloor r\rfloor\}$. Throughout this paper, let $C_1,C_2,\cdots>0$ be universal constants. Let $K=2^k$ be large but fixed in terms of $N$ and to be chosen later, where $k$ is a positive integer. Recall $\kappa\in (0,1)$ as stated in Theorem \ref{thm:1.1}. Let $m\in\mathbb Z_+$ be such that
\[
K^{m+1} \leq \kappa N<K^{m+2}.
\]
Note that $m\rightarrow\infty$ since $N\rightarrow\infty$ and $K, \kappa$ are fixed.

Suppose $B$ is a box in $\mathbb R^2$ and $B\cap\mathbb Z^2\neq\emptyset$, we denote the lower left corner of $B\cap\mathbb Z^2$ by $z_B$.
For each path $P$ in $\mathbb Z^2$, denote by $x_P$ and $y_P$ the starting and ending vertices of $P$, respectively. Denote $\|P\|=\|x_P-y_P\|$.

\subsection{Outline of the proof}

The general proof strategy we employ in this paper is  multi-scale analysis, which is a classic and powerful method in the percolation theory; see for instance \cite{MR1378847,MR1624084,MR3417515,MR3947326}. In order to apply it to prove \eqref{eq:complement-event}, it requires us to combine a contour argument analogous to \cite[Proposition 4]{MR3800790}, which plays an initial role, with the induction analysis analogous to \cite[Lemma 4.4]{MR3947326}. The former is quite similar to \cite{MR3800790}, while the latter is hard in this paper. The main difficulty lies in planning a proper induction strategy and tackling the fluctuation of the harmonic functions in all scales.

Section \ref{sec:2-pre} is devoted to preliminaries, for the sake of the reader. We will list basic results about the DGFF, and show some facts required in later proofs. We will also review the tree structure of a path constructed in \cite{MR3947326}. Roughly speaking, a path $P$ in scale $K^j$, i.e. $\| P \|$ is comparable to $K^j$, is associated with a tree $\mathcal{T}_P$ of depth $j$. Nodes at level $r$ in $\mathcal T_P$ are identified as disjointed sub-paths of $P$ in scale $K^{j-r}$, and the parent/child relation of nodes corresponds to path/sub-path relation. Tame paths are those looking like straight lines, and untamed ones are those looking like curves (see Definition \ref{def:tame}). Then, the fact is that untamed nodes are rare in $\mathcal{T}_P$ for all $P \in \mathcal{P}^{\kappa, \epsilon} $ \cite[Proposition 3.6]{MR3947326}, where
 \[
\mathcal{P}^{\kappa, \epsilon} = \left\{P: P \text { is a path in } V_{N},\|P\| \geq \kappa N, \text { and }|P| \leq N^{1+\epsilon}\right\}
 \]
is defined in \eqref{Eq.defnPke}, and we drop the subscript $N$ for brevity in the context below. Therefore, it remains to show that it is unlikely that tame nodes are all $\lambda$-open, which is actually the essential ingredient of Theorem~\ref{thm:1.1}.

In Section \ref{sec:3-open-path}, we will deal with the contour argument. Concretely, we will show that the probability of there existing a tame and open path started in a fixed box decays stretched-exponentially in $K$ (see Theorem \ref{thm:r=0}). To carry this out, note that the existence of a tame and open path implies that a parallelogram $D$ with aspect ratio $O(K)$ has an open crossing. Next, we cut $D$ into $O(\sqrt K)$ sections uniformly, and extract a  sub-parallelograms $D_i$ with aspect ratio $O(1)$ from the middle of each section (see Figure \ref{fig:LDP}). Then, exponential decay follows from the following two facts. One is that with positive probability, a parallelogram with aspect ratio $O(1)$ has no open crossings (Lemma~\ref{lem:para-crossing}). The other is that the Gaussian values in different $D_i$'s are roughly independent.

In Section \ref{sec:Multi-scale analysis}, we will deal with the induction analysis. Recall that $P$ of scale $K^j$ corresponds to a tree $\mathcal T_P$ of depth $j$. Thus the ratio of tame and open leaves in $\mathcal T_P$ is the average of those in $\mathcal T_{P^{(i)}}$'s, where $P^{(i)}$'s are the children of $P$ and are of number at least $K$. Since $K$ is chosen large, one can apply a large deviation analysis (see Theorem \ref{thm:r=1} and Theorem \ref{thm:xi-bound}). However, we will encounter some technicalities during the proof. Concretely, we need to control the fluctuation of harmonic functions at all scales in an efficient way, so that one can translate the open property into a demand on the GFF at every sub-scale. To this goal, for each level $0\le r\le j$, corresponding to scale $K^{j-r}$, we choose the threshhold $\varepsilon_r$ (see~\eqref{eq:epsilon}) to be large enough to make sure that the harmonic functions at scale $K^{j-r}$ exceed $\varepsilon_r$ with probability decaying sufficiently fast (see Lemmas~\ref{lem:E0}, \ref{lem:E-1} and \ref{lem:E-r}), but on the other hand, $\varepsilon_r$ is not too large in the sense that $\varepsilon_r$'s is a summable sequence. The balance of these two parts ensures that our strategy works.

\section{Preliminaries}\label{sec:2-pre}
There are some basic facts about the two-dimensional discrete Gaussian free field which will be used intensively throughout this paper. For completeness, we will introduce them in Section \ref{subsec:DGFF}. In Section \ref{subsec:tree-structure}, we will collect the results we need from \cite{MR3947326}, including the tree structure of a path (Proposition \ref{prop:tree}) and the upper bound on the total flow through untamed nodes in the associated tree (Lemma \ref{lem:untame-flow}).

\subsection{Properties of two-dimensional DGFF}\label{subsec:DGFF}
In this section, we give a rigorous definition for the DGFF and review some standard estimates about the DGFF. Let $B\subseteq \mathbb Z^2$ be finite and non-empty. Denote by $\{ \eta^B(v): v\in B \}$ the DGFF on $B$ with Dirichlet boundary conditions. It is a mean-zero Gaussian process that vanishes on the boundary $\partial B=\{u\in B:\|u-v\|=1 \text{ for some } v\in B^c\}$, with covariance given by
\[
\mathbb E \eta^B(u)\eta^B(v)=G_B(u,v) \quad \text{ for } u,v\in B,
\]
where $G_B(u,v)$ is the Green's function associated with a simple random walk in $B$, i.e., the expected number of visits to $v$ before reaching $\partial B$ for a discrete simple random walk started at $u$. Without loss of generality, we always assume $\eta^B|_{B^c}=0$. 

To eliminate boundary issues, we will need to consider vertices that have at least an appropriate distance from the boundary. For this purpose, fix $\chi=\frac{1}{10}$, and if $B\subseteq\mathbb Z^2$ is a box of side length $L$, define the box $B^{\chi}:=\{z\in B:d_{\infty}(z,\partial B)>\chi L\}.$

The next lemma says that the DGFF is log-correlated, which can be found in \cite[Eqaution (4)]{MR3947326}.
\begin{lem}\label{lem:log-corr}
Suppose that $B\subseteq\mathbb Z^2$ is a box of side length $L$. There is a universal constant $C_1>0$ such that
\[
\left|\mathbb E\left(\eta^B(u)\eta^B(v)\right)-\frac{2}{\pi}\log\frac{L}{|u-v|_{\infty}\vee 1}\right|\le C_1 \quad \text{for all } u,v \in B^{\chi}.
\]
\end{lem}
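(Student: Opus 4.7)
The plan is to reduce the estimate to classical asymptotics for the potential kernel of the two-dimensional simple random walk together with its relation to the Green's function. Let $a\colon\mathbb Z^2\to\mathbb R$ denote the potential kernel of the simple random walk, normalized by $a(0)=0$. A standard Fourier-inversion computation (see, e.g., Lawler, \emph{Intersections of Random Walks}, Theorem~1.6.2) yields the asymptotic
\[
a(x)=\frac{2}{\pi}\log\|x\|+\kappa_0+O(\|x\|^{-2})\qquad (x\ne 0)
\]
for a universal constant $\kappa_0$, uniformly in $x$. Since $\|x\|$ and $|x|_\infty$ are comparable up to a factor of $\sqrt 2$, this equivalently reads $a(x)=\tfrac{2}{\pi}\log(|x|_\infty\vee 1)+O(1)$ for all $x\in\mathbb Z^2$.

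Next I would invoke the classical identity
\[
G_B(u,v)=\mathbb E_u\bigl[a(S_{\tau_{\partial B}}-v)\bigr]-a(u-v),\qquad u,v\in B,
\]
where $\tau_{\partial B}$ is the hitting time of $\partial B$. One verifies it by noting that the right-hand side, viewed as a function of $u$, is discrete-harmonic on $B\setminus\{v\}$, vanishes on $\partial B$, and has the correct point source at $v$ (since $a$ satisfies $(\tfrac14\Delta)a=\delta_0$), so by uniqueness it coincides with $G_B(\cdot,v)$.

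Given these two ingredients the lemma is essentially immediate. For $u,v\in B^\chi$ the hypothesis $d_\infty(v,\partial B)>\chi L$ forces $\|w-v\|\asymp L$ for every $w\in\partial B$ (lower bound $\chi L$, upper bound $\sqrt 2\,L$), so the asymptotic for $a$ gives $a(w-v)=\tfrac{2}{\pi}\log L+O(1)$ uniformly in $w\in\partial B$; taking $\mathbb E_u$ yields $\mathbb E_u[a(S_{\tau_{\partial B}}-v)]=\tfrac{2}{\pi}\log L+O(1)$, with no information about the harmonic measure on $\partial B$ required. The bulk term obeys $a(u-v)=\tfrac{2}{\pi}\log(|u-v|_\infty\vee 1)+O(1)$ directly, the case $u=v$ being absorbed by $a(0)=0$. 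Subtracting the two expressions and collecting the error terms into a single constant $C_1$ proves the lemma. The only step deserving real care, and the closest thing to a genuine obstacle, is the derivation of the $O(1)$-uniform expansion of $a$; everything downstream is harmonic-measure-agnostic and uses only norm comparisons, which keeps the argument uniform in $L$ and independent of the precise boundary geometry of $B$.
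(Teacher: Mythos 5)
The paper does not prove this lemma---it cites it directly from \cite[Equation (4)]{MR3947326}---so there is no in-text proof to compare against. Your argument via the potential kernel $a(x)=\tfrac{2}{\pi}\log\|x\|+\kappa_0+O(\|x\|^{-2})$ and the representation $G_B(u,v)=\mathbb E_u[a(S_{\tau_{\partial B}}-v)]-a(u-v)$ is correct, is the standard derivation of this estimate, and handles the boundary term cleanly by observing that $v\in B^{\chi}$ forces $\|w-v\|\asymp L$ uniformly over $w\in\partial B$, so no control of harmonic measure is needed.
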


The next lemma is the well-known Markov property of the DGFF. A version can be found in \cite[Section 2.2]{MR3947326}. 
\begin{lem}\label{lem:DMP}
	Let $D$ be a finite subset of $\mathbb Z^2$, and $B\subseteq D$. Let $\eta^D$ be the DGFF on $D$, $H^B$ be the conditional expectation of $\eta^D$ given $\eta^D|_{B^c\cup\partial B}$. Then
	\[
	\eta^B:=\eta^D-H^B
	\]
	is a version of the DGFF on B, and it is independent of $H^B$. In other words, $\eta^D=\eta^B\oplus H^B$ is an orthogonal decomposition.
\end{lem}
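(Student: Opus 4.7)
The plan is to reduce the statement to Gaussian orthogonality plus one standard Green's function identity. Since $(\eta^D(w))_{w\in D}$ is a centred Gaussian vector, the conditional expectation $H^B=\mathbb{E}(\eta^D\mid \eta^D|_{B^c\cup\partial B})$ is linear in the conditioning variables and coincides with the $L^2$-orthogonal projection of $\eta^D$ onto $\mathcal{H}:=\mathrm{span}\{\eta^D(w):w\in B^c\cup\partial B\}$. Orthogonality plus joint Gaussianity forces the residual $\eta^B:=\eta^D-H^B$ to be independent of the whole $\sigma$-algebra generated by $\eta^D|_{B^c\cup\partial B}$, and in particular of $H^B$. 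This settles the independence claim and the orthogonal-decomposition assertion at once; it only remains to identify the law of $\eta^B$.

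Next I would produce the explicit random-walk representation
\begin{equation*}
H^B(u)=\mathbb{E}_u\!\left[\eta^D(S_{\tau_B})\right]=\sum_{w\in\partial B\cup B^c}\mathbb{P}_u(S_{\tau_B}=w)\,\eta^D(w),\qquad u\in B\setminus\partial B,
\end{equation*}
where $S$ is simple random walk and $\tau_B$ its first hitting time of $\partial B\cup B^c$. Since the right-hand side manifestly lies in $\mathcal{H}$, it must equal $H^B$ provided that $\eta^D(u)$ minus it is orthogonal to every $\eta^D(w)$ with $w\in B^c\cup\partial B$. That orthogonality reduces to the identity
\begin{equation*}
G_D(u,w)=\mathbb{E}_u\!\left[G_D(S_{\tau_B},w)\right],\qquad u\in B\setminus\partial B,\ w\in B^c\cup\partial B,
\end{equation*}
which is an immediate application of the strong Markov property at time $\tau_B$ to the interpretation of $G_D$ as expected number of visits before exiting $D$. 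As a by-product, the formula shows that $H^B$ agrees with $\eta^D$ on $B^c\cup\partial B$, so $\eta^B$ vanishes there, matching the Dirichlet convention.

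The final step is to verify that the covariance of $\eta^B$ on $B\setminus\partial B$ is $G_B$. Expanding $\eta^B=\eta^D-H^B$ and applying the representation above twice yields
\begin{equation*}
\mathbb{E}\bigl[\eta^B(u)\eta^B(v)\bigr]=G_D(u,v)-\mathbb{E}_u\!\left[G_D(S_{\tau_B},v)\right],
\end{equation*}
and the standard last-exit decomposition of the walk in $D$ identifies the right-hand side with $G_B(u,v)$. I do not expect any conceptual obstacle: the whole result is a textbook Markov property for a Gaussian field, the only genuine input being the Green's function identity above. The main risk is purely bookkeeping, namely keeping the distinction between $B\setminus\partial B$ and $\partial B$ consistent throughout so that the boundary values of $\eta^B$ on $\partial B$ really are zero rather than inadvertently left free.
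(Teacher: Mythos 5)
Your argument is correct and is the standard proof of the domain Markov property for the DGFF: identify the conditional expectation with the $L^2$-projection onto the Gaussian space spanned by $\{\eta^D(w):w\in B^c\cup\partial B\}$, realise it explicitly as the harmonic extension $u\mapsto\mathbb{E}_u[\eta^D(S_{\tau_B})]$ by checking the required orthogonality via the Green's function identity $G_D(u,w)=\mathbb{E}_u[G_D(S_{\tau_B},w)]$, and then compute the residual covariance to recover $G_B$. The paper itself does not prove this lemma; it simply cites it as the well-known Markov property (pointing to Section 2.2 of Ding--Li), so there is no in-paper proof to compare against. One minor terminological remark: the final step identifying $G_D(u,v)-\mathbb{E}_u[G_D(S_{\tau_B},v)]$ with $G_B(u,v)$ is really a first-passage (strong Markov at $\tau_B$) decomposition of the number of visits, not a last-exit decomposition, though the computation you intend is the right one.
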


For the next lemma, we quote a version suited to our needs, which follows straightforwardly from the version in \cite[Lemma 3.10]{MR3433630}.
\begin{lem}\label{lem:H^2}
	In addition to the assumptions in Lemma \ref{lem:DMP}, we further assume that $B$ is a box of side length $L$. Then
	\begin{equation}
	\mathbb E\left( H^B(x)-H^B(y) \right)^2\le C_2\frac{|y-x|_{\infty}}{L}\quad \text{ for all } x,y \in B^{\chi},
	\end{equation}
	where $C_2>0$ is a universal constant.
\end{lem}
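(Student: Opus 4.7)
The plan is to leverage the orthogonal decomposition in Lemma~\ref{lem:DMP}, together with standard asymptotics of the two-dimensional random walk Green's function and potential kernel; this is essentially the strategy of \cite[Lemma~3.10]{MR3433630}, which I would adapt to the present setting.

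First, the orthogonality $\eta^B\perp H^B$ yields
\[
\mathbb{E}\bigl(H^B(x)-H^B(y)\bigr)^2 = \bigl[G_D(x,x)+G_D(y,y)-2G_D(x,y)\bigr] - \bigl[G_B(x,x)+G_B(y,y)-2G_B(x,y)\bigr].
\]
A strong Markov argument, decomposing random walk visits before and after the first exit time $\tau_B$ from $B$, gives the identity $G_D(u,v)-G_B(u,v) = \mathbb{E}_u\bigl[G_D(S_{\tau_B},v)\bigr]$ for $u,v\in B$; combined with the symmetry of $G_D$, this simplifies to
\[
\mathbb{E}\bigl(H^B(x)-H^B(y)\bigr)^2 = (\mathbb{E}_x-\mathbb{E}_y)\bigl[G_D(S_{\tau_B},x)-G_D(S_{\tau_B},y)\bigr].
\]

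Second, I would bound the integrand $F(w):=G_D(w,x)-G_D(w,y)$ uniformly for $w\in\partial B$. Substituting the potential-kernel identity $G_D(w,u)=\mathbb{E}_w[a(S_{\tau_D},u)]-a(w,u)$ and using the classical expansion $a(u,v)=\frac{2}{\pi}\log|u-v|_\infty+\kappa_0+O(|u-v|_\infty^{-2})$, a first-order Taylor estimate gives
\[
\bigl|a(w,x)-a(w,y)\bigr| \le C\,\frac{|x-y|_\infty}{|w-x|_\infty}\quad\text{whenever } |w-x|_\infty\gtrsim|x-y|_\infty.
\]
Since $x,y\in B^\chi$ and $B\subseteq D$, every $w\in\partial B$ and every $z\in\partial D$ lies at $\ell^\infty$-distance at least $\chi L/2$ from both $x$ and $y$, so applying this Taylor bound to both the $a(w,\cdot)$ term and the $a(S_{\tau_D},\cdot)$ term yields $\|F\|_{\infty,\,\partial B}\le C|x-y|_\infty/L$ in the regime $|x-y|_\infty\le\chi L/4$. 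In the complementary regime $|x-y|_\infty>\chi L/4$, the claimed bound is of constant order; matching it reduces to the trivial estimate $\mathbb{E}(H^B(x)-H^B(y))^2=O(1)$, which follows from the leading-order cancellations between $G_D$ and $G_B$ in the orthogonal decomposition.

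Third, since $(\mathbb{E}_x-\mathbb{E}_y)[F(S_{\tau_B})]$ is dominated in absolute value by $2\|F\|_{\infty,\,\partial B}$, combining with the previous step delivers the claimed $\mathbb{E}(H^B(x)-H^B(y))^2\le C_2|x-y|_\infty/L$. The main technical obstacle is precisely the Taylor-level cancellation: a more pedestrian approach via discrete gradient estimates on the (random) harmonic function $H^B$ would introduce an undesirable $\log L_D$ factor from the (unbounded) variance of $H^B$ on the boundary; carrying out the cancellation in $F=G_D(\cdot,x)-G_D(\cdot,y)$ at the deterministic level of the potential kernel is what eliminates this factor and produces the linear rate $|x-y|_\infty/L$.
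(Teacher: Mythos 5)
The paper does not supply a proof of this lemma; it simply quotes it as a direct consequence of \cite[Lemma 3.10]{MR3433630}. Your proposal reconstructs the standard argument behind that reference, and it is correct in its essential structure: using the orthogonal decomposition of Lemma~\ref{lem:DMP} to write the left side as a difference of Green's function second differences, then the last-exit/strong-Markov identity $G_D(u,v)-G_B(u,v)=\mathbb E_u\big[G_D(S_{\tau_B},v)\big]$ to reduce everything to the quantity $(\mathbb E_x-\mathbb E_y)\big[F(S_{\tau_B})\big]$ with $F(w)=G_D(w,x)-G_D(w,y)$, and finally the potential-kernel expansion together with a first-order cancellation to obtain $\|F\|_{\infty,\partial B}=O(|x-y|_\infty/L)$. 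One small caveat worth recording: in the large-separation regime $|x-y|_\infty>\chi L/4$ it is cleaner to keep working with the same representation and note that $a(u,x)-a(u,y)=\tfrac2\pi\log\tfrac{\|u-x\|}{\|u-y\|}+O(L^{-2})$ is uniformly $O(1/\chi)$ once $\|u-x\|\wedge\|u-y\|\gtrsim\chi L$; this makes the ``trivial $O(1)$'' bound precise without relying on a vague appeal to cancellation between $G_D$ and $G_B$. Also, the usual expansion of the potential kernel is in Euclidean norm (the additive constant $\kappa_0$ is tied to $\|\cdot\|_2$), but since all norms on $\mathbb R^2$ are equivalent and only differences of $a$ enter, your use of $|\cdot|_\infty$ is harmless. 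Up to these cosmetic points, the proof is valid and is the same argument the cited lemma is built on.
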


Next, we estimate the difference of harmonic functions. It will be intensively used for the rest parts of this paper. 
\begin{lem}\label{lem:fluct-H}
	In addition to the assumptions in Lemma \ref{lem:H^2}, we further assume that $U$ is a box of side length $\ell$ in $B^{\chi}$. There exists a universal constant $C_3>0$ such that if $\varepsilon\ge C_3\sqrt{\frac{\ell}{L}}$, then for all $z\in U$,
	\[
	\mathbb P\Big(\left|H^B(x)-H^B(z)\right|\ge \varepsilon \text{ for some } x\in U \Big)\le
	4\exp\left\{-\frac{\varepsilon^2 L}{8C_2\ell}\right\}.
	\]
\end{lem}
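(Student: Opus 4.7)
The plan is to treat $X_x := H^B(x) - H^B(z)$, $x \in U$, as a centered Gaussian process and control its supremum by combining Dudley's entropy bound (for the mean) with the Borell--TIS concentration inequality (for the fluctuation about the mean). By Lemma~\ref{lem:H^2}, the intrinsic metric $d(x,y) := \sqrt{\mathbb{E}(X_x - X_y)^2}$ satisfies $d(x,y) \leq \sqrt{C_2 |x-y|_\infty / L}$ on $U \subseteq B^{\chi}$; consequently $\sup_{x \in U} \mathrm{Var}(X_x) \leq \sigma^2 := C_2 \ell / L$ and $\mathrm{diam}_d(U) \leq \sigma_0 := \sqrt{C_2 \ell / L}$.

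First I would bound $\mathbb{E}\sup_{x \in U} X_x$ via Dudley's entropy integral. Since a $d$-ball of radius $r$ contains the $\ell_\infty$-ball of radius $r^2 L / C_2$, the covering number $N(r)$ of $U$ satisfies $N(r) \lesssim (\ell C_2/(r^2 L))^2$ when $r \geq \sqrt{C_2/L}$, while the trivial bound $N(r) \leq |U| \lesssim \ell^2$ suffices on the lattice-scale sliver $r \in [0, \sqrt{C_2/L}]$. Substituting $t = r/\sigma_0$ in the main piece of Dudley's integral reduces it to $\sigma_0$ times the universal convergent quantity $\int_0^1 \sqrt{\log(1/t^2)}\,dt$, while the sliver contributes only a further $O(\sqrt{(\log \ell)/L})$, which is absorbed into $\sigma_0$ once $\ell \geq e$ (for bounded $\ell$ the box $U$ has $O(1)$ points and a single-point Gaussian tail bound already suffices). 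The upshot is $\mathbb{E}\sup_{x \in U} X_x \leq C''\sqrt{\ell/L}$ for some universal constant $C''$.

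Choosing $C_3 := 2 C''$, the hypothesis $\varepsilon \geq C_3 \sqrt{\ell/L}$ then forces $\mathbb{E}\sup_{x \in U} X_x \leq \varepsilon/2$, so Borell--TIS with deviation $u = \varepsilon - \mathbb{E}\sup_{x\in U} X_x \geq \varepsilon/2$ yields
$$\mathbb{P}\bigl(\sup_{x \in U} X_x \geq \varepsilon\bigr) \leq \exp\bigl(-(\varepsilon/2)^2/(2\sigma^2)\bigr) = \exp\bigl(-\varepsilon^2 L/(8 C_2 \ell)\bigr);$$
by symmetry the same bound controls $\sup_{x \in U}(-X_x)$, and a union bound over the two one-sided events produces the claim with prefactor $2$, well below the stated $4$. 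The only delicate point in the proof is ensuring that the Dudley integral picks up no unwanted $\sqrt{\log \ell}$ factor, which is exactly what pins down the threshold $\varepsilon \geq C_3 \sqrt{\ell/L}$ with no logarithmic correction; an equivalent route avoiding Borell--TIS is a direct dyadic chaining along grids of mesh $2^{-k}\ell$ with a geometric allocation $\varepsilon \cdot 2^{-k/4}$ of the total deviation to each scale, which produces the same exponent and prefactor up to universal constants.
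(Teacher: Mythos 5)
Your argument is correct and follows the same two-step architecture as the paper's proof: bound $\mathbb{E}\sup_{x\in U}\bigl(H^B(x)-H^B(z)\bigr)$ by a Dudley-type estimate and then concentrate via Borell--TIS with $\sigma^2=C_2\ell/L$ and deviation at least $\varepsilon/2$. The only (cosmetic) difference is that the paper applies the pre-packaged lattice version of Dudley (Lemma~\ref{lem:dudley}) after rescaling to the normalized metric $\mathbb{E}(G_x-G_y)^2\le|x-y|_\infty/\ell$, whereas you re-derive the $\sqrt{\ell/L}$ bound from the entropy integral with explicit covering numbers, carefully checking that the lattice-scale piece carries no $\sqrt{\log\ell}$ loss.
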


We need the following two lemmas to prove Lemma \ref{lem:fluct-H}.

\begin{lem}[{Dudley’s inequality, \cite[Lemma 4.1]{MR1088478}}]\label{lem:dudley}
	Let $U\subseteq \mathbb Z^2$ be a box of side length $\ell$ and $\{G_w: w\in U\}$ be a mean zero Gaussian field satisfying
	\[
	\mathbb{E}\left(G_{z}-G_{w}\right)^{2} \leq|z-w|_{\infty} / \ell \quad \text { for all } z, w \in U.
	\]
	Then $\mathbb E\max_{w\in U}G_w\le C_4$, where $C_4>0$ is a universal constant.
\end{lem}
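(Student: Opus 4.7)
The plan is to prove this by the classical Gaussian chaining argument that underlies Dudley's entropy bound. Introduce the canonical (pseudo)metric $d(z,w):=\sqrt{\mathbb{E}(G_z-G_w)^2}$ on $U$, so the hypothesis reads $d(z,w)\le\sqrt{|z-w|_\infty/\ell}$; in particular the $d$-diameter of $U$ is at most $1$.

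The main step is a covering-number estimate in the metric $d$. For $\varepsilon\in(0,1]$, if $|z-w|_\infty\le\ell\varepsilon^2$ then $d(z,w)\le\varepsilon$, so every $\ell^\infty$-ball of radius $\ell\varepsilon^2$ is contained in a $d$-ball of radius $\varepsilon$. Since $U$ is an $\ell\times\ell$ box in $\mathbb Z^2$, it can be covered by at most $C\max(1,\ell/(\ell\varepsilon^2))^2=C\max(1,\varepsilon^{-4})$ such $\ell^\infty$-balls. Hence the minimal $d$-covering number satisfies $N(U,d,\varepsilon)\le C\varepsilon^{-4}$ for $\varepsilon\in(0,1]$, and $N(U,d,\varepsilon)=1$ for $\varepsilon>1$.

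Fix any anchor $w_0\in U$ and apply Dudley's entropy inequality, cited as \cite[Lemma 4.1]{MR1088478}, to the centered Gaussian process $\{G_w-G_{w_0}\}_{w\in U}$ along the dyadic scales $2^{-n}$. Since $\mathbb{E}G_{w_0}=0$, this gives
\[
\mathbb{E}\max_{w\in U}G_w \;=\; \mathbb{E}\max_{w\in U}(G_w-G_{w_0}) \;\le\; C\int_0^{1} \sqrt{\log N(U,d,\varepsilon)}\,d\varepsilon \;\le\; C\int_0^{1}\sqrt{\log(C\varepsilon^{-4})}\,d\varepsilon,
\]
and the final integral is finite and independent of $\ell$ and all other parameters, producing the universal constant $C_4$.

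There is no substantial obstacle: the argument is a textbook application of Gaussian chaining. The only small point to check is that the polynomial growth $\varepsilon^{-4}$ of the covering numbers (arising from $U$ being two-dimensional and $d$ being the square root of an $\ell^\infty$-distance) gives an integrand of order $\sqrt{\log(1/\varepsilon)}$, which is integrable near the origin and hence yields a finite universal bound.
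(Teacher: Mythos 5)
The paper does not prove this lemma; it simply quotes it from Adler's lecture notes (MR1088478, Lemma 4.1). Your proof is a correct derivation via the standard metric-entropy calculation: the canonical pseudometric $d(z,w)=\sqrt{\mathbb{E}(G_z-G_w)^2}$ satisfies $d(z,w)\le\sqrt{|z-w|_\infty/\ell}$, giving a $d$-diameter of at most $1$ and a covering-number bound $N(U,d,\varepsilon)\le C\varepsilon^{-4}$, after which Dudley's entropy integral $\int_0^1\sqrt{\log N(U,d,\varepsilon)}\,d\varepsilon$ is finite and independent of $\ell$. This is exactly how the stated version follows from the general entropy bound in the cited reference, and the two scales (the square-root from the variance bound and the two-dimensionality of the index set) combine to produce the $\varepsilon^{-4}$ growth you identify, which is integrable under $\sqrt{\log}$.

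One presentational remark worth tightening: as written, you invoke ``Dudley's entropy inequality, cited as [Lemma 4.1, MR1088478]'' inside the proof of a lemma that the paper itself attributes to [Lemma 4.1, MR1088478], which reads as circular. The intended reading is fine --- the cited source states the general entropy-integral form, and the lemma in the paper is the specialized consequence you derive --- but you should make that distinction explicit (e.g., ``the general entropy bound from [MR1088478] gives...'') so the logic is clearly non-circular. Also, since the process is already centered, the entropy bound applies directly to $\mathbb{E}\max_w G_w$ without anchoring at $w_0$; the anchoring step is harmless but unnecessary.
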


\begin{lem}[{Borell–Tsirelson inequality, \cite[Lemma 7.1]{MR3184689}}]\label{lem:borell}
	Let $\{ G_z: z\in X\}$ be a Gaussian field on a finite index set $X$. Set $\sigma^2=\max_{z\in X}\mathrm{Var}(G_z)$. Then
	\[
	\mathbb{P}\left(\left|\max _{z \in X} G_{z}-\mathbb{E} \max _{z \in X} G_{z}\right| \geq a\right) \leq 2 e^{-\frac{a^{2}}{2 \sigma^{2}}} \quad \text { for all } a>0.
	\]
\end{lem}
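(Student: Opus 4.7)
The plan is to reduce the statement to a Gaussian concentration inequality for Lipschitz functions on Euclidean space, and then prove that by the Herbst entropy method based on the Gaussian logarithmic Sobolev inequality.

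First, I reduce to a standard Gaussian vector. Let $n=|X|$ and write the covariance matrix $\Sigma$ of $(G_z)_{z\in X}$ as $\Sigma=AA^{T}$ for some $n\times m$ matrix $A$; this is possible because $\Sigma$ is symmetric positive semidefinite. Then $(G_z)_{z\in X}\stackrel{d}{=}A\mathbf g$, where $\mathbf g$ is a standard Gaussian vector in $\mathbb R^{m}$, so $\max_{z\in X}G_z\stackrel{d}{=}F(\mathbf g)$ with $F(\mathbf x):=\max_{i}(A\mathbf x)_{i}$. Next I verify that $F$ is $\sigma$-Lipschitz: for any $\mathbf x,\mathbf y\in\mathbb R^{m}$, choose $i^{*}$ with $F(\mathbf x)=(A\mathbf x)_{i^{*}}$; then
\[
F(\mathbf x)-F(\mathbf y)\le(A\mathbf x)_{i^{*}}-(A\mathbf y)_{i^{*}}=\langle A_{i^{*}},\mathbf x-\mathbf y\rangle\le\|A_{i^{*}}\|\,\|\mathbf x-\mathbf y\|,
\]
and symmetry in $\mathbf x,\mathbf y$ gives $|F(\mathbf x)-F(\mathbf y)|\le\sigma\|\mathbf x-\mathbf y\|$ using $\|A_{i}\|^{2}=\Sigma_{ii}=\mathrm{Var}(G_{z_{i}})\le\sigma^{2}$.

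It then remains to prove the Gaussian concentration statement: any $\sigma$-Lipschitz $F:\mathbb R^{m}\to\mathbb R$ satisfies
\[
\mathbb P\bigl(|F(\mathbf g)-\mathbb E F(\mathbf g)|\ge a\bigr)\le2\exp\!\left(-\frac{a^{2}}{2\sigma^{2}}\right).
\]
I would prove this by Herbst's argument based on the Gaussian logarithmic Sobolev inequality. Assume $F$ smooth with $\mathbb E F(\mathbf g)=0$ (the general case follows by mollification by a small Gaussian kernel and centering). Let $H(\lambda)=\mathbb E e^{\lambda F(\mathbf g)}$. The Gaussian LSI applied to $e^{\lambda F/2}$ gives
\[
\lambda H'(\lambda)-H(\lambda)\log H(\lambda)=\mathrm{Ent}\bigl(e^{\lambda F(\mathbf g)}\bigr)\le\frac{\lambda^{2}}{2}\,\mathbb E\bigl[\|\nabla F(\mathbf g)\|^{2}e^{\lambda F(\mathbf g)}\bigr]\le\frac{\lambda^{2}\sigma^{2}}{2}H(\lambda).
\]
Dividing by $\lambda^{2}H(\lambda)$, one recognizes $(\log H(\lambda)/\lambda)'\le\sigma^{2}/2$; integrating from $0$ (with $\log H(\lambda)/\lambda\to\mathbb E F(\mathbf g)=0$) yields $\log H(\lambda)\le\lambda^{2}\sigma^{2}/2$. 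A Chernoff bound optimized at $\lambda=a/\sigma^{2}$ produces the one-sided estimate $\mathbb P(F(\mathbf g)\ge a)\le e^{-a^{2}/(2\sigma^{2})}$; applying the same argument to the $\sigma$-Lipschitz function $-F$ and union-bounding gives the two-sided claim.

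The main obstacle is the Gaussian logarithmic Sobolev inequality itself, which is the nontrivial analytic input. The classical routes to it are tensorization of Gross' two-point inequality followed by the central limit theorem, or a $\Gamma_{2}$-calculus computation along the Ornstein--Uhlenbeck semigroup; either gives the sharp constant that is responsible for the $2\sigma^{2}$ in the exponent. Once the LSI is in hand, all the remaining steps above are mechanical, so in practice the lemma is invoked as a black-box concentration estimate, exactly as done in the references cited in the excerpt.
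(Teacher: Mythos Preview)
Your argument is correct and follows the standard route to Borell--TIS concentration: represent the Gaussian vector as a linear image of a standard Gaussian, check that the coordinate maximum is $\sigma$-Lipschitz, and then invoke Gaussian concentration for Lipschitz functionals via the Herbst argument and the logarithmic Sobolev inequality. One small omission: the lemma as stated does not assume the field is centered, whereas you wrote $(G_z)\stackrel{d}{=}A\mathbf g$. This is harmless, since with means $\mu_z$ one has $\max_z G_z = F(\mathbf g)$ for $F(\mathbf x)=\max_z(\mu_z+(A\mathbf x)_z)$, and the same Lipschitz computation goes through unchanged; you might add a sentence to that effect.

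As for comparison with the paper: there is nothing to compare. The paper does not prove Lemma~2.6; it is quoted verbatim from \cite[Lemma~7.1]{MR3184689} and used as a black box in the proof of Lemma~2.4. Your write-up therefore supplies strictly more than the paper does. If you want an alternative that avoids the log-Sobolev machinery, the original Borell and Sudakov--Tsirelson proofs go through the Gaussian isoperimetric inequality, and Pisier's proof via the interpolation $\mathbf g_\theta=\mathbf g\sin\theta+\mathbf g'\cos\theta$ gives the same exponent with only elementary calculus; but the LSI/Herbst route you chose is the cleanest for a self-contained exposition.
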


\begin{proof}[Proof of Lemma \ref{lem:fluct-H}]
	Note that $U\subseteq B^{\chi}\subseteq B\subseteq D$. Fix $z\in U$. For $x\in U$, let $G_x=H^B(x)-H^B(z)$. By Lemma \ref{lem:H^2}, for $x,y\in U$,
	\begin{gather}
	\mathbb{E}\left(G_{x}-G_{y}\right)^{2} \leq C_2\frac{|y-x|_{\infty}}{L}
	\le \frac{C_2\ell}{L}\cdot\frac{|y-x|_{\infty}}{\ell}, \label{eq:Gx-Gy}\\
	\mathrm{Var}(G_x)\le C_2\frac{|x-z|_{\infty}}{L}\le \frac{C_2\ell}{L}. \label{eq:Gx}
	\end{gather}
	By \eqref{eq:Gx-Gy} and  Lemma \ref{lem:dudley}, we have $\mathbb E\max_{x\in U} G_x\le \frac{C_3}{2}\sqrt{\frac{\ell}{L}}$,
	where $C_3=2C_4\sqrt{C_2}$. Combining the symmetry of Gaussian distribution, we have
	\begin{align*}
	&\mathbb P\Big(\left|H^B(x)-H^B(z)\right|\ge \varepsilon \text{ for some } x\in U \Big)\\
	\le&2\mathbb P\Big(G_x\ge \varepsilon \text{ for some } x\in U \Big)
	\le 2\mathbb P\left( \max _{x \in U} G_{x}-\mathbb{E} \max _{x \in U} G_x\ge \varepsilon-\frac{C_3}{2}\sqrt{\frac{\ell}{L}} \right).
	\end{align*}
	Noting that $\varepsilon\ge C_3\sqrt{\frac{\ell}{L}}$ and applying Lemma \ref{lem:borell}, we obtain
	\begin{align*}
	&\mathbb P\Big(\left|H^B(x)-H^B(z)\right|\ge \varepsilon \text{ for some } x\in U \Big)\\
	\le&2\mathbb P\left( \max _{x \in U} G_{x}-\mathbb{E} \max _{x \in U} G_x\ge \frac{\varepsilon}{2}\right) \le 4 \exp\left\{-\frac{\varepsilon^{2}}{8 \sigma^{2}}\right\},
	\end{align*}
	where $\sigma^2=\max_{x\in U}\mathrm{Var}(G_x)\le\frac{C_2\ell}{L}$ by \eqref{eq:Gx}. This concludes the lemma.
\end{proof}

We will need the following standard estimates on simple random walks. We refer the reader to \cite[Lemma 1]{MR3800790} for a similar derivation.
\begin{lem}\label{lem:green's}
	Let $\ell_1>0$, $\ell_2\ge\ell_1+2$ and $z\in \mathbb Z^2$. Suppose $V_{\ell_1}(z)\subseteq D\subseteq V_{\ell_2}(z)$. Then for all $u\in\partial V_{\ell_1}(z)$,
	\[
	\sum_{v\in \partial V_{\ell_1}(z)} G_{D}(u,v)\le\sum_{v\in \partial V_{\ell_1}(z)} G_{V_{\ell_2}(z)}(u,v)\le 2(\ell_2-\ell_1).
	\]
\end{lem}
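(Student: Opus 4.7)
The lemma has two inequalities, which I address in turn.

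The first inequality, $\sum_{v\in\partial V_{\ell_1}(z)}G_D(u,v) \le \sum_{v\in\partial V_{\ell_1}(z)}G_{V_{\ell_2}(z)}(u,v)$, is just the standard domain monotonicity of the Green's function: since $D\subseteq V_{\ell_2}(z)$, a simple random walk killed on exit from $D$ visits any vertex $v$ no more often than the walk killed on exit from $V_{\ell_2}(z)$, so $G_D(u,v)\le G_{V_{\ell_2}(z)}(u,v)$ pointwise, after which one sums over $v$.

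For the second inequality I translate so $z=0$, write $T$ for the exit time of the simple random walk $(S_n)$ from $V_{\ell_2}(0)$, and interpret
\[
\sum_{v\in\partial V_{\ell_1}(0)}G_{V_{\ell_2}(0)}(u,v) = \mathbb E_u\Big[\sum_{n<T}\mathbf 1_{S_n\in\partial V_{\ell_1}(0)}\Big] =: \mathbb E_u[N]
\]
as the expected occupation time of the layer $\partial V_{\ell_1}(0)$ by the walk started at $u$. The plan is a supersolution argument: produce a nonnegative $f:V_{\ell_2}(0)\to\mathbb R$ vanishing on $\partial V_{\ell_2}(0)$ whose discrete Laplacian $\Delta f(x) := \tfrac14\sum_{y\sim x}f(y)-f(x)$ satisfies $\Delta f \le -c\,\mathbf 1_{\partial V_{\ell_1}(0)}$ on the interior of $V_{\ell_2}(0)$ for some explicit $c>0$; then optional stopping on the nonnegative supermartingale $M_n := f(S_n)+c\sum_{k<n}\mathbf 1_{S_k\in\partial V_{\ell_1}(0)}$ gives $c\,\mathbb E_u[N] \le f(u)$. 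A natural candidate is the clipped $\ell^\infty$-distance $f(x) := 2(\lfloor\ell_2/2\rfloor - \max(|x|_\infty,\lfloor\ell_1/2\rfloor))$, which is linear in $|x|_\infty$ on the annulus $V_{\ell_2}(0)\setminus V_{\ell_1}(0)$, constant on $V_{\ell_1}(0)$, vanishes on $\partial V_{\ell_2}(0)$, and equals $2(\lfloor\ell_2/2\rfloor-\lfloor\ell_1/2\rfloor)$ on $\partial V_{\ell_1}(0)$.

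The principal technical step, and where I expect the main obstacle, is verifying the supersolution inequality through a case analysis of the discrete Laplacian of $|x|_\infty$. The function $|x|_\infty$ is harmonic at vertices where a single coordinate strictly realizes the $\ell^\infty$-maximum but has Laplacian $-1$ on the diagonal rays $\{|x_1|=|x_2|\}$. On a non-corner face of $\partial V_{\ell_1}(0)$, $f$ is harmonic along the face and toward the inner region, while the single outward neighbor (lying in the annulus, where $f$ is smaller by $2$) contributes an extra downward drift of $1/2$; at each of the four corners of $\partial V_{\ell_1}(0)$, two outward neighbors contribute, yielding a drift of $1$. Assembling these cases gives $\Delta f \le -\tfrac12\,\mathbf 1_{\partial V_{\ell_1}(0)}$, so $c=1/2$, and hence $\mathbb E_u[N] \le 2 f(u) \le 2(\ell_2-\ell_1)$, as required. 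The factor of $2$ in the stated bound is precisely this bottleneck on the flat faces; the whole argument mirrors the derivation in \cite[Lemma 1]{MR3800790} that the excerpt refers to.
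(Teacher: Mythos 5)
Your two-step structure is correct and gives a self-contained proof: domain monotonicity for the first inequality, then a supersolution plus optional stopping for the second. The paper itself does not prove the lemma (it only cites \cite[Lemma 1]{MR3800790}); the argument there proceeds by a gambler's-ruin estimate on the escape probability per visit to $\partial V_{\ell_1}(z)$, giving a geometric bound on the number of visits, which lands on the same quantity $4\bigl(\lfloor\ell_2/2\rfloor-\lfloor\ell_1/2\rfloor\bigr)$ that your supersolution produces. So your route is genuinely different in framing but equivalent in content, and each has its merits: the escape-probability argument is shorter to state, while your potential-theoretic version makes the constant transparent and avoids conditioning on the sequence of layer visits.

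Two small corrections. First, the parenthetical claim that $|x|_\infty$ ``has Laplacian $-1$ on the diagonal rays'' has the wrong sign: at a diagonal point $x=(s,s)$ with $s>0$, two neighbours increase $|x|_\infty$ and two keep it, so $\Delta|x|_\infty=+\tfrac12$, hence $\Delta f=-2\Delta|x|_\infty=-1$ there. If one took your stated sign at face value, $\Delta f$ would be $+2$ on the diagonal, which would destroy the supersolution inequality on the annulus; the assembled conclusion $\Delta f\le -\tfrac12\mathbf{1}_{\partial V_{\ell_1}(0)}$ is nonetheless the correct one, since the actual computation gives $\Delta f\le 0$ throughout $V_{\ell_2}(0)\setminus V_{\ell_1}(0)$. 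Second, the final line $\mathbb{E}_u[N]\le 2f(u)\le 2(\ell_2-\ell_1)$ silently uses $f(u)=2\bigl(\lfloor\ell_2/2\rfloor-\lfloor\ell_1/2\rfloor\bigr)\le\ell_2-\ell_1$, which holds when $\ell_1$ and $\ell_2$ have the same parity but can fail by an additive $2$ otherwise (e.g.\ $\ell_1$ odd, $\ell_2$ even). This does not matter for the paper's use of the lemma, where $\ell_1=2w$ and $\ell_2=4l$ are both even, but it is worth either stating the parity assumption or weakening the bound to $2(\ell_2-\ell_1)+2$ in the general case.
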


\subsection{The tree structure associated with a path}\label{subsec:tree-structure}
We now briefly recall some facts from \cite[Section 3]{MR3947326} in this section. For an integer $r\ge 1$, let
 \[
\mathcal{B D}_{r}=\left\{\left[a r-\frac{1}{2},(a+1) r-\frac{1}{2}\right] \times\left[b r-\frac{1}{2},(b+1) r-\frac{1}{2}\right]: a, b \in \mathbb{Z}\right\}.
\]
Note that $(B\cap\mathbb Z^2)$'s partition $\mathbb Z^2$, where $B$ is taken over $\mathcal{B D}_{r}$. Define the sets of paths 
 \[
 {\mathcal{S} \mathcal{L}_{0}:= \mathbb{Z}^{2}}, \ \mathcal{S} \mathcal{L}_{j}:=\left\{P: 1 \leq \frac{1}{K^{j}}\|P\| \leq 1+\frac{1}{K}, P \subseteq B\left(x_{P},\|P\|\right)\right\} \quad \text { for all } j \geq 1,
 \]
recalling $x_P$ and $y_P$ are the two ends of $P$.
If $P\in \mathcal{SL}_j$, $P$ is said to be in scale $K^j$. 

 \begin{defi}\label{def:tame}
For $j\ge0$ and each $P\in \mathcal{SL}_{j+1}$, let

\centerline{$E(P):=\left\{z \in \mathbb{R}^{2}:\left\|x_{P}-z\right\|+\left\|y_{P}-z\right\| \leq\left(1+\frac{2}{K^{2}}\right)\|P\|\right\},$}
\centerline{and $\tilde{E}(P)=\left\{z \in \mathbb{R}^{2}: d(z, E(P)) \leq 4 K^{j}\right\}$.}
\noindent A path $P$ is said to be tame if $P\subseteq \tilde{E}(P)$ and untamed otherwise.
 \end{defi}
Note that only when a path is in scale $K^j$ with $j\ge 1$ could we say it is tame or untamed.

\begin{prop}[{\cite[Proposition 3.1]{MR3947326}}]\label{prop:tree}
Suppose that $j\in[m-2]$ and $P\in\mathcal{SL}_{j+1}$. Then, there exists $\ell\in[K^j,(1+\frac1K)K^j]$, a positive integer $d$, and disjoint child-paths $P^{(i)}$of $P$ for $i\in [d]$ such that the following hold.
\begin{itemize}
	\item[(a)] $d\ge K$.
	\item[(b)] Each box in $\mathcal{BD}_{K^j}$ is visited by at most $12$ sub-paths of the form $P^{(i)}, i \in[d]$.
	\item[(c)] $P^{(i)} \in \mathcal{S} \mathcal{L}_{j}$ for each $i \in[d]$.
\end{itemize}
Furthermore, $d\ge \frac12\|P\|$ for $j=0$; and one can extract $d_0$ disjoint sub-paths in $\mathcal{S} \mathcal{L}_{m-1}$ from $P$ with $\|P\| \geq \kappa N$ such that (b) holds with $j=m-1$, where
$
d_{0}:=\left\lfloor\frac{\kappa N}{K^{m-1}}\right\rfloor \geq K.
$
\end{prop}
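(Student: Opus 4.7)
The plan is to construct the child-paths $P^{(i)}$ via a greedy first-exit decomposition of $P$. Fix a scale parameter $\ell \in [K^j, (1+1/K)K^j]$, enumerate the vertices of $P$ in order as $v_0 = x_P, v_1, \ldots, v_n = y_P$, set $t_0 = 0$ and inductively let $t_{i+1}$ be the smallest $t > t_i$ with $\|v_t - v_{t_i}\| \ge \ell$, terminating when no such index exists. Take $P^{(i)}$ to be the sub-path of $P$ from $v_{t_{i-1}}$ to $v_{t_i}$.

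Conclusion (c) follows directly from the stopping rule: every vertex of $P^{(i)}$ strictly before its endpoint lies in the open ball of radius $\ell$ around $x_{P^{(i)}}$, and since lattice steps have length $1$ the endpoint satisfies $\|P^{(i)}\| \in [\ell, \ell + 1)$; choosing $\ell$ so that $\ell + 1 \le (1+1/K)K^j$ gives $\|P^{(i)}\| \in [K^j, (1+1/K)K^j]$, and clearly $P^{(i)} \subseteq B(x_{P^{(i)}}, \|P^{(i)}\|)$, so $P^{(i)} \in \mathcal{SL}_j$. For conclusion (a), the triangle inequality applied to the polygonal trace $x_{P^{(1)}}, \ldots, x_{P^{(d)}}, y_{P^{(d)}}$ gives $\|y_{P^{(d)}} - x_P\| \le d \cdot (1+1/K)K^j$, while the termination rule yields $\|y_P - y_{P^{(d)}}\| < \ell$; combined with $\|P\| \ge K^{j+1}$ this produces $d \ge K$ after a mild integer adjustment of $\ell$ or by extracting one extra sub-path from the residual tail when available.

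Conclusion (b) is the genuinely subtle step. If sub-paths $P^{(i_1)}, \ldots, P^{(i_s)}$ all visit a given box $B \in \mathcal{BD}_{K^j}$, then by (c) each starting vertex $x_{P^{(i_r)}}$ lies in the $(1+1/K)K^j$-neighborhood of $B$, a region contained in a box of side at most $3K^j + 2$. Consecutive starts along $P$ are at Euclidean distance in $[K^j, (1+1/K)K^j]$, so a packing estimate for sequences of points in such a region with consecutive-spacing $\ge K^j$ bounds $s$ by an absolute constant; elementary geometry refines this to $12$.

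The $j = 0$ case is handled separately: since $\mathcal{SL}_0 = \mathbb{Z}^2$, every distinct vertex of the self-avoiding path $P$ is a valid singleton sub-path, which immediately gives $d \ge \tfrac{1}{2}\|P\|$ and ensures (b) trivially since each box in $\mathcal{BD}_1$ contains a single lattice point. The $j = m-1$ extraction from $P$ with $\|P\| \ge \kappa N$ uses the same first-exit construction at scale $K^{m-1}$, yielding at least $d_0 = \lfloor \kappa N/K^{m-1} \rfloor$ sub-paths. I expect the main obstacle to be the packing argument for (b): the greedy construction only ensures that each visiting sub-path starts near $B$, not that the number of such starts is bounded by a small absolute constant, and obtaining the precise bound $12$ requires careful geometric bookkeeping that exploits the uniform lower bound on consecutive-start spacing.
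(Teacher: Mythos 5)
The paper does not prove this proposition; it is imported verbatim as \cite[Proposition 3.1]{MR3947326}, so there is no internal proof to compare against. Judged on its own merits, your greedy first-exit construction handles (c) correctly, but the argument for (b) has a genuine gap, and (a) is not quite closed either.

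For (b), the packing claim is false as stated, in two ways. First, the sub-paths visiting a fixed box $B\in\mathcal{BD}_{K^j}$ have indices $i_1<\dots<i_s$ that are in general \emph{not} consecutive in $[d]$, so the spacing $\|x_{P^{(i+1)}}-x_{P^{(i)}}\|\ge K^j$ controls nothing about $\|x_{P^{(i_{r+1})}}-x_{P^{(i_r)}}\|$. Second, even for a genuinely consecutive subsequence, a lower bound on consecutive spacing in a box of side $3K^j$ does not bound the number of points: a sequence may bounce back and forth (e.g.\ alternating between two clusters at distance $K^j$) indefinitely. This is not a bookkeeping subtlety but a real obstruction, because $\mathcal{SL}_{j+1}$ imposes no upper bound on $|P|$; one can take $P$ that reaches distance $K^{j+1}$ from $x_P$ and then oscillates near a single box $B$ an arbitrary number of times before terminating at $y_P$, and the raw first-exit decomposition would then produce arbitrarily many sub-paths all meeting $B$. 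Hence (b) cannot hold for the unpruned greedy decomposition, and a proof must either select a sub-family of the greedy pieces (and re-establish $d\ge K$ for the survivors) or use a decomposition rule that tracks the radial coordinate $\|v_t-x_P\|$ so that sub-paths visiting a given box are automatically confined to a bounded window of indices. Your proof contains neither.

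For (a), the triangle-inequality computation you outline gives, with $\|P^{(i)}\|<\ell+1\le(1+1/K)K^j$ and a residual tail of norm $<\ell$, the bound $K^{j+1}\le\|P\|<(d+1)(1+1/K)K^j$, hence $d+1>K^2/(K+1)=K-1+\tfrac{1}{K+1}$, i.e.\ $d\ge K-1$, one short of the claim. The ``mild integer adjustment of $\ell$'' does not obviously help (the deficit is structural, coming from the residual tail, not from rounding), and the fallback of ``extracting one extra sub-path from the residual tail'' is precisely unavailable because the tail has diameter $<\ell$. A correct proof must account for this loss, for instance by showing the residual tail can always be merged into $P^{(d)}$ without leaving $\mathcal{SL}_j$, or by a different choice of decomposition. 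As written, this step also does not close.

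Your treatment of (c), the verification that each $P^{(i)}$ lands in $\mathcal{SL}_j$ and is contained in $B(x_{P^{(i)}},\|P^{(i)}\|)$, is correct. Your $j=0$ discussion implicitly assumes $P$ is self-avoiding, which the statement does not require, but the conclusion $d\ge\tfrac12\|P\|$ survives: any lattice path from $x_P$ to $y_P$ visits at least $|x_P-y_P|_\infty+1\ge\|P\|/\sqrt2$ distinct vertices.
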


Fix $P\in\mathcal{SL}_{j}$. The tree $\mathcal{T}_P$ associated with $P$ is constructed as follows. The nodes of $\mathcal{T}_P$ correspond to a family of sub-paths of $P$, where the parent/child relation in $\mathcal{T}_P$ corresponds to path/sub-path relation in the plane by Proposition \ref{prop:tree}. In particular, the root denoted by $\rho$ corresponds to $P$ and the leaves denoted by $\mathcal{L}$ correspond to vertices on $P$. Denote the level of a node $u$ by $L(u)$ with $L(\rho)=0$ and identify $u$ as a sub-path in $\mathcal{SL}_{j-L(u)}$ with $d_u$ children. Each node as a path enjoys the properties in Proposition \ref{prop:tree}.

Especially, $P$ with $\|P\| \geq \kappa N$ is associated with a tree $\mathcal{T}_P$ of depth $m$. Let $\theta_P$ be the unit uniform flow on $\mathcal{T}_P$ from $\rho$ to $\mathcal{L}$, with $\theta_{P}(\rho)=1$ and $\theta_{P}(v)=\frac{1}{d_{u}} \theta_{P}(u)$ if $v$ is a child of $u$. For $\delta\in(0,1)$, let
\begin{equation}\label{eq:kap-delta-K}
\mathcal{P}^{\kappa, \delta, K}:=\left\{P: P \text { is a path in } V_{N},\|P\| \geq \kappa N \text { and }|P| \leq N^{1+\frac{\delta}{K^{2} k}}\right\}.
\end{equation}

\begin{lem}[{\cite[Proposition 3.6]{MR3947326}}]\label{lem:untame-flow}
	For each $P\in\mathcal{P}^{\kappa, \delta, K}$,
	\[
	\sum_{u: 1 \leq L(u) \leq m-1} \theta_{P}(u) 1_{ \{u \text{ is untamed}\}} \leq 2 \delta m.
	\]
\end{lem}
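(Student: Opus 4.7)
This is the content of \cite[Proposition 3.6]{MR3947326}; I outline the proof strategy here. The starting observation is the probabilistic identification
\[
\sum_{u: 1 \le L(u) \le m-1} \theta_P(u) 1_{\{u \text{ untamed}\}} = \sum_{r=1}^{m-1} \mathbb P(V_r \text{ is untamed}),
\]
where $V$ is a random leaf of $\mathcal T_P$ sampled with probability $\theta_P(\cdot)$ and $V_r$ is its level-$r$ ancestor. The plan is to bound each summand on the right by exploiting the length budget $|P| \le N^{1+\delta/(K^2 k)}$ that defines $\mathcal P^{\kappa, \delta, K}$.

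The geometric input is a definite length excess at untamed sub-paths. If $u$ is untamed at level $r$, then there exists $z \in P^{(u)}$ with $d(z, E(P^{(u)})) > 4K^{m-r-1}$; a direct calculation on the ellipse $E(P^{(u)})$, whose minor-to-major axis ratio is $O(1/K)$ (coming from $(1+2/K^2)^2 - 1 \asymp 1/K^2$), then gives
\[
|P^{(u)}| \ge \|x_u - z\| + \|z - y_u\| \ge (1+2/K^2)\|P^{(u)}\| + 8K^{m-r-2}.
\]
By Proposition \ref{prop:tree} the level-$r$ nodes partition the vertex set of $P$ into pairwise disjoint pieces ordered along $P$, so the triangle inequality yields
\[
\sum_{u: L(u) = r}\bigl(|P^{(u)}| - \|P^{(u)}\|\bigr) \le |P| - \|P\|.
\]
Therefore the number $|U_r|$ of untamed nodes at level $r$ satisfies $|U_r| \le (|P| - \|P\|)/(8 K^{m-r-2})$. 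Combined with the uniform-flow estimate $\theta_P(u) \le K^{-r}$ (valid since each ancestor has at least $K$ children by Proposition \ref{prop:tree}(a)) this yields a per-level bound on $\sum_{u \in U_r} \theta_P(u)$.

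The main technical obstacle is converting these per-level estimates into the precise total $2\delta m$. This relies on the interplay among $K = 2^k$, $K^{m+1} \le \kappa N$, and the linearization
\[
N^{\delta/(K^2 k)} - 1 \le \exp\bigl(\delta m (\log 2)/K^2\bigr) - 1 \lesssim \delta m /K^2,
\]
which, together with $\|P\| \le C_\kappa N$, gives $|P| - \|P\| \lesssim C_\kappa \delta m N/K^2$. A level-dependent refinement of the flow bound above---tracking how $\theta_P$ is distributed among the $d_u \ge K$ children of a common parent more carefully than the crude estimate $\theta_P(u) \le K^{-r}$ does, and iterating over levels---then telescopes to the required constant $2\delta m$; the full book-keeping is carried out in \cite[Section~3]{MR3947326}.
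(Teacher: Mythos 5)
The paper does not prove this lemma; it is invoked directly as \cite[Proposition~3.6]{MR3947326}, so there is no in-paper argument to compare against. Your outline reproduces the high-level shape of the argument correctly (random-leaf reformulation of the flow sum, geometric excess of order $K^{m-r-2}$ at an untamed node, disjointness of the level-$r$ sub-paths from Proposition~\ref{prop:tree}), but the final bookkeeping as you have written it does not close, because the budget you assign to $|P|-\|P\|$ is wrong. You claim $|P|-\|P\| \lesssim C_\kappa\,\delta m N/K^2$, obtained by linearizing $N^{\delta/(K^2 k)}-1$. That linearization controls $|P|-N$, not $|P|-\|P\|$: the definition of $\mathcal P^{\kappa,\delta,K}$ only guarantees $\|P\|\ge\kappa N$ with $\kappa\in(0,1)$ fixed, so $|P|-\|P\|$ can be as large as $N^{1+\delta/(K^2k)}-\kappa N = \Theta\big((1-\kappa)N\big)$. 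Feeding this into your per-level count $|U_r| \le (|P|-\|P\|)/(8K^{m-r-2})$ together with $\theta_P(u)\le K^{-r}$ gives
\[
\sum_{r=1}^{m-1}\sum_{u\in U_r}\theta_P(u)
\le (m-1)\,\frac{(|P|-\|P\|)K^2}{8K^{m}}
= \Theta\!\left(\frac{mK^{4}(1-\kappa)}{\kappa}\right),
\]
which is nowhere near $2\delta m$ and in fact blows up with $K$. So the additive ``length-excess budget'' you propose cannot work as stated.

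The underlying difficulty is that the relevant budget in \cite{MR3947326} is not the \emph{additive} quantity $|P|-\|P\|$ but a \emph{multiplicative/logarithmic} one. Roughly, one works with $1/\theta_P(V_m)=\prod_{r=0}^{m-1} d_{V_r}$ for a $\theta_P$-random leaf $V_m$; each $d_u\ge K$ always, while at an untamed ancestor the geometric excess forces $d_u$ to be larger by a factor $1+\Theta(1/K^2)$. Taking logarithms and using $\mathbb E[\,1/\theta_P(V_m)\,]=|\mathcal L|\le|P|\le N^{1+\delta/(K^2 k)}$ together with $K^{m+1}\le\kappa N<K^{m+2}$ (so $\log N\approx m k\log 2$), one finds that the \emph{expected number} of untamed ancestors, i.e.\ $\sum_{r}\mathbb P(V_r\text{ untamed})$, is controlled by $\big(\log|P|-m\log K\big)\cdot\Theta(K^2)\lesssim \delta m$. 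This is the step your outline glosses over as ``a level-dependent refinement of the flow bound,'' but it is not a refinement of the count estimate you set up --- it is a different accounting scheme, and it is the one that actually uses the exponent $\delta/(K^2k)$ in the definition of $\mathcal P^{\kappa,\delta,K}$ in the way that matters. A minor secondary point: Proposition~\ref{prop:tree} gives that level-$r$ nodes are pairwise disjoint sub-paths of $P$, not that they partition its vertex set; your inequality $\sum_u(|u|-\|u\|)\le|P|-\|P\|$ survives (the gaps only help, since $|g|\ge\|g\|$), but the phrasing should be corrected.
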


At the end of this section, we give some definitions that are similar to those in \cite{MR3947326}. Define
\[
\mathcal{P}_{j}\left(B\right) :=\left\{P \in \mathcal{S} \mathcal{L}_{j} : x_{P} \in B \right\},
\]
\[
T_j(B):=\left\{ P\in\mathcal{P}_{j}(B): P\text{ is tame} \right\},
\]
\[
\mathrm{END}_{j} :=\big\{B: B \in \mathcal{B} \mathcal{D}_{K^{j-2}}\text{ and }B\cap V_{N} \neq \emptyset\big\}.
\]
For $B, B' \in \mathrm{END}_j$, define $T_j(B,B'):=\{ P\in T_j(B): y_P\in B' \}$. 

\section{Tame paths are unlikely to be open}\label{sec:3-open-path}
In this section, we will use a contour argument to show that the probability of there existing a tame and open path started in a fixed box decays stretched- exponentially in $K$ (see Theorem \ref{thm:r=0}). To this goal, we start by showing that, with positive probability, a parallelogram with aspect ratio $O(1)$ has no open crossings (Lemma \ref{lem:para-crossing} ) in Section \ref{subsec:good-para}. Then in Section \ref{subsec:proof-r=0}, we give the proof of Theorem \ref{thm:r=0} by using Lemma \ref{lem:para-crossing} and estimates about harmonic functions in Lemma \ref{lem:fluct-H}.

Recall that we call a vertex $x\in V_N$ is $\lambda$-open if $\big|\eta^{V_{2N}}(x)\big|\le \lambda$. Next, we extend this definition a bit. For $V\subseteq V_{2N}$ and $\alpha\in\mathbb R$, we say a vertex $x\in V$ is $(V, \lambda, \alpha)$-open if $|\eta^V(x)+\alpha|\le \lambda$. A path $P\subseteq V$ is said to be $(V, \lambda, \alpha)$-open if so is every vertex in $P$. For brevity, we will occasionally use open  to mean $\lambda$-open or $(V, \lambda, \alpha)$-open according to the context.

\begin{thm}\label{thm:r=0}
	For any $\lambda_0>0$, let $\lambda\ge\lambda_0$. There exists $c=c(\lambda_0)>0$ such that the following holds for all $K\ge K_0(\lambda):=e^{c\lambda^2}$. Suppose that $j\in [m-1]$, $B\in \mathrm{END}_j$, $V_{4K^j}(z_{B})\subseteq V\subseteq V_{2N}$, and $\alpha\in\mathbb R$. Then,
    \begin{equation}
    \mathbb P\big(P \text{ is } (V, \lambda,\alpha)\text{-open for some } P\in T_j(B)\big)
    \le e^{-0.01\sqrt{K}}.
    \end{equation}
\end{thm}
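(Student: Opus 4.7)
The plan is to reduce the event in the theorem to open long-crossings of $O(K)$ fixed parallelograms, and then to bound each such crossing probability by a stretched-exponential in $K$ via a Markov decomposition combined with Lemma~\ref{lem:para-crossing} and Lemma~\ref{lem:fluct-H}. First I would discretize direction: any $P\in T_j(B)$ is contained in $\tilde E(P)$, which is essentially a parallelogram of length $(1+O(K^{-1}))K^j$, width $O(K^{j-1})$, and hence aspect ratio $\asymp K$, with long axis $\vec v=(y_P-x_P)/\|P\|$. Fix a set $\mathcal V$ of $O(K)$ unit vectors spaced by angle $\asymp 1/K$, and for each $\vec v\in\mathcal V$ define a parallelogram $D_{\vec v}\subseteq V_{4K^j}(z_B)$ of those dimensions with long axis $\vec v$, slightly enlarged so as to absorb every $P\in T_j(B)$ whose direction rounds to $\vec v$. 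The event in the theorem is then contained in $\bigcup_{\vec v\in\mathcal V}\{D_{\vec v}\text{ admits a }(V,\lambda,\alpha)\text{-open long crossing}\}$. Partition the long axis of $D_{\vec v}$ into $\lfloor\sqrt K\rfloor$ sections of length $K^{j-1/2}$, and extract from the middle of each section a sub-parallelogram $D_i$ of side $\asymp K^{j-1}$ and aspect ratio $O(1)$; any long crossing of $D_{\vec v}$ must traverse each $D_i$ in the short direction.

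Next I would invoke the Markov property. Enlarge each $D_i$ to a box $U_i$ of side $\asymp K^{j-1/2}$ so that the $U_i$'s are pairwise disjoint, contained in $V$, and $D_i\subseteq U_i^\chi$. By Lemma~\ref{lem:DMP} applied to $B=\bigcup_i U_i$ inside $V$, one has the orthogonal decomposition $\eta^V=H\oplus\tilde\eta$ on $B$, where $H=H^B$ is the harmonic extension of $\eta^V|_{V\setminus B}$ and the restrictions $\tilde\eta|_{U_i}$ are mutually independent DGFFs on $U_i$. Fix a constant $\varepsilon=\varepsilon(\lambda_0)>0$, and let $\mathcal G_i:=\{\max_{x\in D_i}|H(x)-H(z_{D_i})|\le\varepsilon\}$. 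Since the side ratio $|D_i|/|U_i|\asymp K^{-1/2}$, Lemma~\ref{lem:fluct-H} gives $\mathbb P(\mathcal G_i^c)\le 4\exp(-c'\varepsilon^2\sqrt K)$ for a universal constant $c'>0$. On $\mathcal G_i$, a $(V,\lambda,\alpha)$-open crossing of $D_i$ translates to a $(U_i,\lambda+\varepsilon,\alpha+H(z_{D_i}))$-open crossing of a parallelogram of aspect ratio $O(1)$, which by Lemma~\ref{lem:para-crossing} has conditional probability at most $1-c_0(\lambda+\varepsilon)$ given $H$. Since the $\tilde\eta|_{U_i}$'s are conditionally independent given $H$, the joint probability that every $D_i$ is crossed open is at most $(1-c_0)^{\sqrt K}\le e^{-c_0\sqrt K}$. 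Combining this with the union bound over $\bigcup_i\mathcal G_i^c$ and over $\vec v\in\mathcal V$, and choosing $\varepsilon$ a suitable constant, yields the overall bound $e^{-0.01\sqrt K}$ provided $K$ is sufficiently large in terms of $\lambda$.

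\textbf{Main obstacle.} The central tension is to pick $\varepsilon$ large enough to make $\mathbb P(\mathcal G_i^c)$ stretched-exponentially small in $K$, yet small enough that the loss from $c_0(\lambda_0)$ to $c_0(\lambda_0+\varepsilon)$ is absorbed. This precisely forces the $\lambda$-dependent threshold $K_0(\lambda)=e^{c\lambda^2}$, which arises because the non-crossing constant $c_0(\lambda)$ in Lemma~\ref{lem:para-crossing} is expected to decay like $e^{-c\lambda^2}$ for large $\lambda$ (a standard Gaussian tail estimate), so $K$ must be large enough that the factor $|\mathcal V|=O(K)$ and the union bound over $\mathcal G_i^c$ can both be absorbed into $e^{-c_0\sqrt K}$. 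A secondary point is maintaining uniformity of Lemma~\ref{lem:para-crossing} in the arbitrary shift $\alpha\in\mathbb R$, which is expected since the non-crossing probability only increases as $|\alpha|\to\infty$.
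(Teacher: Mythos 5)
Your approach matches the paper's: reduce to a union bound over finitely many parallelograms (the paper unions over target boxes $B'\in\mathrm{END}_j$ with $T_j(B,B')\neq\emptyset$, of which there are $\le K^5$, rather than over discretized directions, but this is cosmetic), cut the long parallelogram $D$ into $\asymp\sqrt K$ sections, extract unit-aspect-ratio sub-parallelograms $D_i$ sitting inside disjoint boxes $V_i$ of side $K^{j-1/2}$, decompose via the Markov property, control the harmonic-part fluctuation on each $D_i$ by Lemma~\ref{lem:fluct-H}, and close with conditional independence and Lemma~\ref{lem:para-crossing}. This is exactly the paper's Proposition~\ref{prop:BB'} plus the reduction Step.

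One point in your commentary is off, and it is worth flagging because it misattributes the source of the $\lambda$-dependence. You write that Lemma~\ref{lem:para-crossing} yields a non-crossing constant $c_0(\lambda)$ which "is expected to decay like $e^{-c\lambda^2}$," and that $K_0(\lambda)=e^{c\lambda^2}$ is forced by absorbing $O(K)$ union-bound losses into $e^{-c_0(\lambda)\sqrt K}$. That is not how the lemma is stated or used: Lemma~\ref{lem:para-crossing} gives a \emph{fixed} non-crossing probability at least $1/8$, uniformly in $\alpha$ and $\lambda$, but only under the hypothesis $L/w\ge e^{c'(\lambda_0)\lambda^2}$ on the aspect ratio. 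When this is applied with $L=K^{j-1/2}$ and $w\asymp K^{j-1}$, i.e.\ $L/w\asymp K^{1/2}$, the hypothesis translates directly into the requirement $K\ge K_0(\lambda)=e^{c\lambda^2}$ (after replacing $\lambda$ by $\lambda+\varepsilon_0$ to accommodate the harmonic fluctuation). With your reading — a $\lambda$-dependent $c_0$ — the final bound would be $e^{-c_0(\lambda)\sqrt K}$, which is \emph{not} the $\lambda$-uniform $e^{-0.01\sqrt K}$ the theorem asserts. So the uniform exponent in the statement is precisely a consequence of the non-crossing probability being $\lambda$-independent once the aspect ratio (hence $K$) is large enough, not of some delicate cancellation between $c_0(\lambda)$ and $K_0(\lambda)$. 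The rest of your sketch, including the choice of a universal $\varepsilon$ (the paper takes $\varepsilon_0=100\sqrt{C_2}$) and the conditional independence step, is faithful to the paper's argument.
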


\subsection{Good parallelograms}\label{subsec:good-para}
In this section, we consider a closed parallelogram $D$ with corners $(a,b)$, $(a+l,b+h)$, $(a+l,b+h+w)$ and $(a,b+w)$, where $(a,b)\in\mathbb R^2$, $l\ge w\ge 10$ (here $10$ is a somewhat arbitrary choice), and $l\ge h\ge 0$. Especially, we say $D$ is \emph{good} if $a,l\in\mathbb Z$ and $l=16 w$. We call $l, w$, $\theta=\arctan \frac{h}{l}$, and
\[
v_0=\left(\left\lfloor\frac{a+h+l-7w\sin^2\theta}{2}\right\rfloor , \left\lfloor \frac{b+h-l+7w\sin\theta\cos\theta}{2} \right\rfloor\right)
\]
respectively the length, width, angle and anchor of $D$. Note that $\theta\in [0,\frac{\pi}{4}]$ and $v_0\in\mathbb Z^2$. By crossing of good $D$ we mean a path in $D$ connecting the left and right sides of $D$ (see Figure \ref{fig:para}).

Let $V$ be a finite set in $\mathbb Z^2$ and $D\cap\mathbb Z^2\subseteq V$, then let $\mathcal{A}(D,V,\lambda,\alpha)$ be the event that there exists a $(V,\lambda,\alpha)$-open crossing of $D$. The reasoning of the following lemma is analogous to that of \cite[Proposition 4]{MR3800790}.

 \begin{figure}
	\centering
	\includegraphics[scale=0.4]{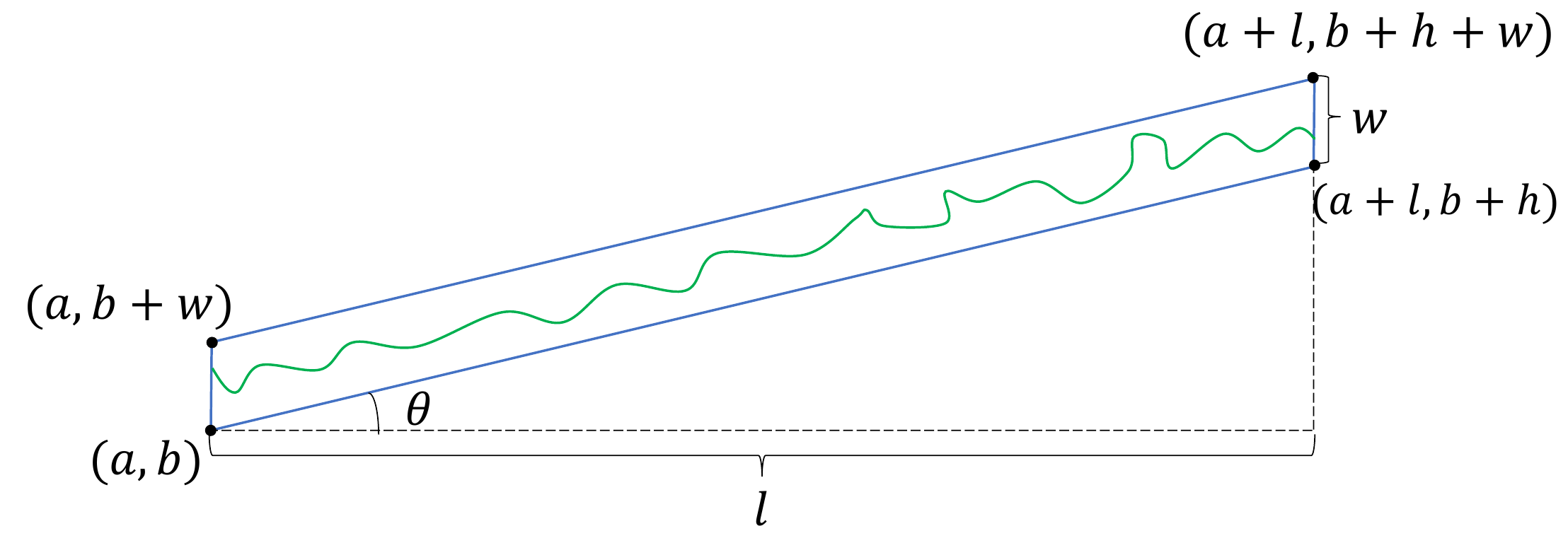}
	\caption{$D$ and its crossing.}
	\label{fig:para}
\end{figure}

\begin{lem}\label{lem:para-crossing}
	For any $\lambda_0>0$, let $\lambda\ge\lambda_0$. There exists $c'=c'(\lambda_0)>0$ such that if 
	\begin{equation}\label{eq:L-w}
	L/w\ge e^{c'\lambda^2},
	\end{equation}
	then for any good parallelogram $D$ with width $w$ and anchor $v_0$, and any  $\alpha\in\mathbb{R}$, we have
	\begin{equation}\label{eq:A}
	\mathbb P\Big( \mathcal{A}\big(D,V_L(v_0),\lambda,\alpha\big) \Big)\le \frac78.
	\end{equation}
\end{lem}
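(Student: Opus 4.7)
The plan is to adapt the contour argument of \cite[Proposition 4]{MR3800790} to our parallelogram setting with the additional shift $\alpha$. It suffices to exhibit, with probability at least $1/8$, a top-to-bottom path of \emph{closed} (i.e.\ not $(V_L(v_0),\lambda,\alpha)$-open) vertices in $D$, since any such dual barrier automatically blocks every left-to-right open crossing and forces the complement of $\mathcal{A}(D,V_L(v_0),\lambda,\alpha)$.

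I would implement this through a single Markov decomposition at an intermediate scale. Pick a large absolute constant $c$ so that $D\subseteq V_{cw}(v_0)^{\chi}$, set $U=V_{cw}(v_0)$, and decompose $\eta^{V_L(v_0)}=\eta^U+H^U$ on $U$ with $\eta^U\perp H^U$. By Lemma~\ref{lem:log-corr}, $\mathrm{Var}(H^U(v_0))=\tfrac{2}{\pi}\log(L/(cw))+O(1)$, which by \eqref{eq:L-w} is at least $c''\lambda^2$ once $c'$ is chosen large. By Lemma~\ref{lem:fluct-H}, the oscillation of $H^U$ over $D$ is at most $1$ except on an event of probability $e^{-\Omega(c)}$. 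Combining, with probability bounded below by some $c_0=c_0(\lambda_0)>0$ a good event $\mathcal{E}$ holds on which $|H^U(x)+\alpha|\ge M$ for every $x\in D$, where $M$ is a large multiple of $\lambda$ coming from a small fraction of the standard deviation of $H^U(v_0)$.

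Conditional on $\mathcal{E}$, the open event at $x\in D$ forces $\eta^U(x)$ into a window of width $2\lambda$ at distance at least $M-1-\lambda$ from the origin. Since $\eta^U(x)$ is centered Gaussian with variance $\tfrac{2}{\pi}\log(cw)+O(1)$, this conditional open probability is bounded by $C\lambda/\sqrt{\log(cw)}\cdot e^{-cM^2/\log(cw)}$; the hypothesis $L/w\ge e^{c'\lambda^2}$ ensures this product is arbitrarily small once $c'$ is large (when $w$ is small, the prefactor is controlled by $1/\sqrt{c'}$ via the joint variance; when $w$ is large, the exponential wins via the growth of $M$). Given such a small conditional open density, a Russo-Seymour-Welsh/duality argument, powered by the Gaussian FKG inequality applied to the monotone events $\{\eta^U(x)>t\}$ (with the sign of $t$ chosen according to $\mathrm{sign}(H^U+\alpha)$), yields a top-to-bottom closed crossing of $D$ with conditional probability close to $1$. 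Unconditioning against $\mathbb{P}(\mathcal{E})\ge c_0$ then delivers \eqref{eq:A}.

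The main obstacle is the delicate balance between (i) the large-scale shift $H^U$ being large enough, on the scale $\sqrt{\log(L/w)}$, to push the open window of $\eta^U$ into its Gaussian tail, and (ii) the intra-$D$ oscillation of $H^U$ remaining $O(1)$. This balance is precisely what the hypothesis $L/w\ge e^{c'\lambda^2}$ and the choice $U=V_{cw}(v_0)$ encode. A secondary difficulty is that $\eta^U$ has long-range correlations so Bernoulli site percolation does not apply directly; Gaussian FKG combined with the sign split substitutes for independence, and because only the constant bound $7/8$ is needed (not quantitative decay) this percolation step is quite forgiving.
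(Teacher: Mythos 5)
Your Markov decomposition at scale $U=V_{cw}(v_0)$ and the estimate on the variance of $H^U(v_0)$ (about $\tfrac{2}{\pi}\log(L/(cw))\gtrsim c''\lambda^2$) and its oscillation over $D$ are correct, and the conditional per-site open probability analysis is sound. The genuine gap is the final step: you assert that Gaussian FKG plus a ``sign split'' lets you run an RSW/Peierls-type duality argument to extract a top-to-bottom closed crossing from a small per-site open probability, and you call this step ``quite forgiving.'' It is not. The field $\eta^U$ restricted to $D$ is log-correlated on a box of side $cw$, so two adjacent sites $x,y$ satisfy $\mathrm{Var}(\eta^U(x)-\eta^U(y))=O(1)$ even though each has variance $\tfrac{2}{\pi}\log(cw)$. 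Consequently, conditional on $\eta^U(x)$ lying in the (translated) open window, $\eta^U(y)$ also lies in the window with probability bounded below by an absolute constant, not by $\epsilon$. Thus $\mathbb{P}(\text{all sites of a path }P\text{ open})$ is not of order $\epsilon^{|P|}$: for a short path it is roughly $\epsilon\cdot c^{|P|}$ with $c<1$ an absolute constant unrelated to $\epsilon$, and a crude Peierls sum over $3^{|P|}$ paths diverges. FKG applied to the increasing events $\{\eta^U(x)>t_x\}$ gives a \emph{lower} bound on the probability that a fixed path is entirely closed, which goes the wrong way; it does not give you a lower bound on the probability that \emph{some} top-to-bottom path is entirely closed, nor an upper bound on the probability of an open left-right crossing. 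This percolation step is precisely the hard part, and it is the reason Theorem~\ref{thm:r=0} requires a separate multi-scale argument even after Lemma~\ref{lem:para-crossing} is in hand.

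The paper avoids this difficulty entirely by a different geometric and probabilistic device, which your sketch does not reproduce. The parallelogram $D$ is rotated by $\pi/2$ three times about the anchor $v_0$ to form an annulus $R$; four open crossings concatenate into an open contour in $R$, so it suffices to bound $\mathbb{P}(\mathscr{C}\neq\emptyset)$ where $\mathscr{C}$ is the set of open contours. Conditioning on the \emph{outermost} open contour $\mathcal{C}=\mathbf C$ fixes $\eta^V$ on $\mathbf C$, and the key observation is that the boundary average $X$ over $\partial V_{2w}(v_0)$ then has conditional mean of magnitude $\le\lambda$ (since $\mathbf C$ is open) and conditional variance $\le 16$ (a Green's function estimate, Lemma~\ref{lem:green's}, because $\mathbf C^*$ sits inside $V_{4l}(v_0)$). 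Unconditionally $\mathrm{Var}(X)\gtrsim\log(L/w)$ is large, so $\mathbb{P}(|X|\le 2\lambda)$ is small while $\mathbb{P}(|Y|\le\lambda)$ is bounded below; the ratio bounds $\mathbb{P}(\mathscr{C}\neq\emptyset)$. This sidesteps any site-percolation argument and is not a ``forgiving'' simplification of it but a structurally different proof. If you want to pursue your route, you would need a genuine quantitative estimate on $\mathbb{P}(\text{path open})$ for log-correlated Gaussians that replaces the Peierls bound — and that is essentially the content of Section~\ref{sec:3-open-path} as a whole.
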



\begin{proof}
	Rotate $D$ around $v_0$ counterclockwise by $i\pi/2$ and denote it by $D_i$, noting $D_0=D$. Since $D$ is good, by our appropriate choice of the anchor $v_0$, $\cup_{i=0}^{3}D_i$ forms an annulus $R$ centered at $v_0$ in $V_{4l}(v_0)$, surrounding $V_{2w}(v_0)$ (see Firgure \ref{fig:rotation}). Let $\mathfrak{C}$ be the collection of all contours in $R$. Here, by contour we mean a path with two endpoints coinciding. We consider a natural partial order on $\mathfrak{C}$: $\mathbf C_1\preceq \mathbf C_2$ if $\mathbf C_1^*\subseteq \mathbf C_2^*$, where $\mathbf C^*$ is the collection of vertices that are surrounded by $\mathbf C$.
	
	\begin{figure}
		\centering
		\includegraphics[scale=0.33]{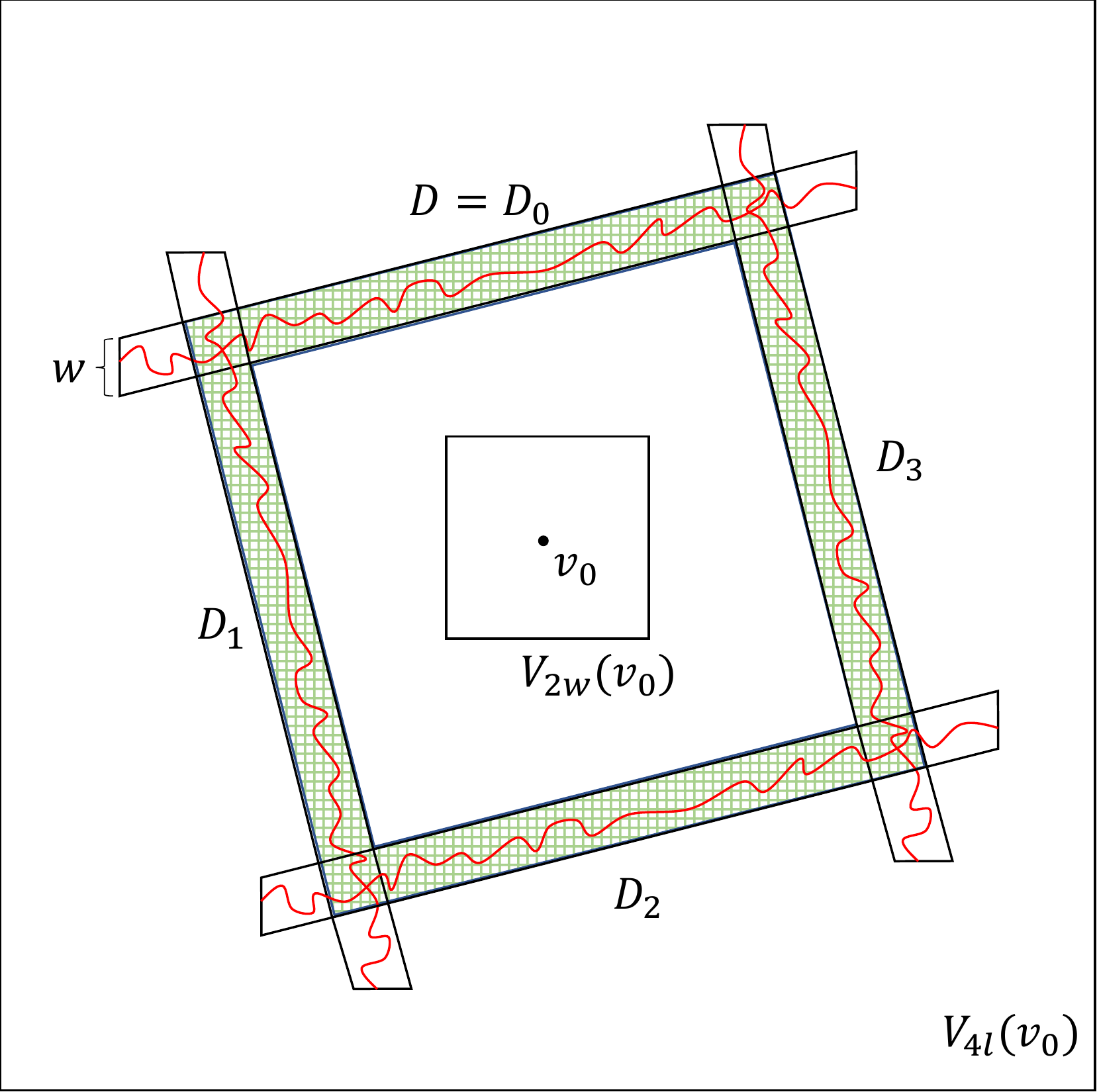}
		\caption{$R$ is the (green) annulus. (Red) curves are open crossings of $D_i$'s.}
		\label{fig:rotation}
	\end{figure}
	
	Denote $V:=V_L(v_0)$ and $\mathcal{A}_i:=\mathcal{A}\big(D_i,V,\lambda,\alpha\big)$, where correspondingly by crossing of $D_i$ for odd $i$, we mean a path in $D_i$ connecting the top an bottom sides of $D_i$. On the event $\cap_{i=0}^3 \mathcal{A}_i$, we can find at least one $(V,\lambda,\alpha)$-open contour in $R$. Let $\mathscr{C}$ be the random subset of $\mathfrak{C}$ consisting of all open contours in $R$. Then, the partial order above generates a well-defined unique maximum contour on $\mathscr{C}$, which is denoted by $\mathcal C$. To the goal, it remains to show
	\begin{equation}\label{eq:C*}
	\mathbb P\big( \mathscr{C}\neq\emptyset\big)\le \frac12.
	\end{equation}
	Assuming \eqref{eq:C*} holds, noting $\cap_{i=0}^3 \mathcal{A}_i\subseteq\{ \mathscr{C}\neq\emptyset \}$ and $\mathbb P\left( \mathcal{A}_i\right)=\mathbb P\left( \mathcal{A}_0 \right) $ by rotation invariance, one has $4\big(1-\mathbb P(\mathcal{A}_0)\big)\ge 1-\mathbb P\big(\cap_{i=0}^3 \mathcal{A}_i\big)\ge1/2$, completing the proof.
	Next, we will prove \eqref{eq:C*}. Denote
	\[
	X:=\frac{1}{|\partial V_{2w}(v_0)|}\sum_{u\in\partial V_{2w}(v_0)}\left(\eta^{V}(u)+\alpha\right).
	\]
    By Lemma \ref{lem:log-corr}, if \eqref{eq:L-w} is satisfied for sufficiently large $c'$, then
	\[
	\mathbb E\eta^{V}(u)\eta^{V}(v)\ge \frac{2}{\pi}\log\left(\frac{L}{2w}\right)-C_1\ge\frac{1}{\pi}\log\left(\frac{L}{2w}\right) \  \text{ for all } u,v\in \partial V_{2w}(v_0).
	\]
	It follows that
	\begin{equation}\label{eq:var-X}
	\mathrm{Var}(X)\ge \frac{1}{\pi}\log\left(\frac{L}{2w}\right).
	\end{equation}
	For a deterministic contour $\mathbf C\in\mathfrak{C}$, let $\hat{\mathbf {C}}=(V\backslash\mathbf C^*)\cup\mathbf C$ be the set of points outside $\mathbf C$ but within $V$. Denote
	$
	\mathcal{F}_{\hat{\mathbf C}}:=\sigma\big\{ \eta^{V}(x): x\in \hat{\mathbf  C}\big\}
	$
	and $Y:=X-\mathbb E\big(X|\mathcal{F}_{\hat{\mathbf C}}\big)$.
    Then,
    \begin{equation}\label{eq:Var-Y}
    \mathrm{Var}(Y)=\frac{1}{|\partial V_{2w}(v_0)|^2}\sum_{u,v\in\partial V_{2w}(v_0)}G_{\mathbf C^*}(u,v)\le 16,
    \end{equation}
    where we have used
    $
    \sum_{v\in\partial V_{2w}(v_0)}G_{\mathbf C^*}(u,v)\le 2(4l-2w)
    $
    by setting $D=\mathbf C^*, \ell_1=2w,\ell_2=4l$ in Lemma \ref{lem:green's}.
    Note that for each $u\in\partial V_{2w}(v_0)$,
   \begin{equation}\label{eq:S-tau}
   \mathbb E\big(\eta^{V}(u)|\mathcal{F}_{\hat{\mathbf C}}\big)=\sum_{v\in \mathbf C}\mathbb P^u(S_{\tau}=v)\cdot\eta^{V}(v),
   \end{equation}
  where $\{S_n\}$ is a simple random walk on $\mathbb Z^2$ started from $u$, and $\tau$ is the first time it hits $\mathbf C$.
  By the definition that $\mathcal{C}$ is the outermost open contour in $\mathscr{C}$, one has $\{ \mathcal{C}=\mathbf C \}\in \mathcal{F}_{\hat{\mathbf C}}$.
  On the event $\{\mathcal{C}=\mathbf C\}$, we have $|\eta^{V}(v)+\alpha| \le \lambda$ for all $v\in \mathbf C$. Combined with \eqref{eq:S-tau}, it gives that
  \[
  \Big|\mathbb E\left(\eta^{V}(u)+\alpha\big|\mathcal{F}_{\hat{\mathbf C}}\right)\Big|
  \le \sum_{v\in \mathbf C}\mathbb P^u(S_{\tau}=v)\cdot \big|\eta^{V}(v)+\alpha\big| \le \lambda \  \text{ for all } u\in \partial V_{2w}(v_0),
  \]
   implying
   $
   \big|\mathbb E\big(X|\mathcal{F}_{\hat{\mathbf C}}\big)\big|
   \le \lambda.
   $
   Consequently, $|Y|\le\lambda$ implies $|X|=\big|\mathbb E\big(X|\mathcal{F}_{\hat{\mathbf C}}\big)+Y\big|\le 2\lambda$.
   Noting that $Y$ and $\mathcal{F}_{\hat{\mathbf C}}$ are independent,
   \begin{equation*}
   \mathbb P\big(|X|\le 2\lambda\big| \mathcal{C}=\mathbf C\big)
   \ge\mathbb P\big( |Y|\le\lambda\big|\mathcal{C}=\mathbf C \big)
   =\mathbb P\big(|Y|\le\lambda\big).
   \end{equation*}
   It follows that
   \begin{equation}\label{eq:cal-C}
   \mathbb P\big(\mathscr C\neq\emptyset\big)
   =\sum_{\mathbf C\in\mathfrak{C}}\mathbb P\big(\mathcal C=\mathbf C\big)
   =\sum_{\mathbf C\in\mathfrak{C}}\frac{\mathbb P(|X|\le 2\lambda, \mathcal{C}=\mathbf C)}{\mathbb P(|X|\le 2\lambda | \mathcal{C}=\mathbf C)}\le \frac{\mathbb P(|X|\le 2\lambda)}{\mathbb P(|Y|\le \lambda)}.
   \end{equation}
   Let $\phi_{\sigma^2}$ be the probability density function of a centered Gaussian random variable with variance $\sigma^2$. Set $\sigma_1^2:=\frac{1}{\pi}\log\left(\frac{L}{2w}\right)$. By \eqref{eq:var-X}, \eqref{eq:Var-Y} and \eqref{eq:cal-C},
   \[
   \mathbb P\big(\mathscr C\neq\emptyset\big)\le  \frac{\phi_{\sigma_1^2}(0)\cdot4\lambda}{\phi_{16}(\lambda_0)\cdot2\lambda_0}.
   \]
  Choose large $c'=c'(\lambda_0)$ such that $\sigma_1$ is large enough to make sure the right hand side above is less than $\frac12$. 
   This completes the proof of the lemma.
\end{proof}

\subsection{Proof of Theorem \ref{thm:r=0}}\label{subsec:proof-r=0}
To prove Theorem \ref{thm:r=0}, it suffices to prove the following proposition.

\begin{prop}\label{prop:BB'}
	Let $K\ge K_0(\lambda)$. For all $B, B' \in \mathrm{END}_j$ such that $T_j(B,B')\neq\emptyset$,
	\begin{equation}
	\mathbb P\Big(P \text{ is } (V,\lambda,\alpha)\text{-open for some } P\in T_j(B,B')\Big)
	\le e^{-0.015\sqrt{K}}.
	\end{equation}
\end{prop}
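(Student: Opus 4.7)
The plan is to implement the contour-crossing strategy sketched in the introduction. The first step is geometric: since every $P \in T_j(B,B')$ is tame with endpoints in $B \times B'$, one has $P \subseteq \tilde E(P)$, and as $(x_P, y_P)$ varies over $B \times B'$, the union $\bigcup_{P} \tilde E(P)$ fits inside a single parallelogram $\Pi$ of length $\asymp K^j$ and width $w_\Pi \asymp K^{j-1}$ aligned with the segment from $z_B$ to $z_{B'}$. By the $\pi/2$-rotation and reflection symmetries of $\mathbb Z^2$ I may assume the angle of $\Pi$ lies in $[0, \pi/4]$. I then partition $\Pi$ into $s := \lfloor \sqrt K \rfloor$ sections of length $\asymp K^{j-1/2}$ along the long axis, and from the middle of each section I carve out a sub-parallelogram $D_i$ of width $w_\Pi$, length $16 w_\Pi$, and the same angle as $\Pi$, making minor integer adjustments so that $D_i$ is ``good'' in the sense of Lemma \ref{lem:para-crossing}; write $v_0^{(i)}$ for its anchor. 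Since $16 w_\Pi \ll K^{j-1/2}$ for $K$ large, the $D_i$'s are well-separated, and as $D_i$ spans the full perpendicular width of $\Pi$, any path in $\Pi$ that traverses the $i$-th section from one end to the other must contain a left-to-right crossing of $D_i$. Hence the event ``some $P \in T_j(B,B')$ is $(V,\lambda,\alpha)$-open'' forces every $D_i$ to admit a $(V,\lambda,\alpha)$-open crossing.

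Next I decouple the $D_i$'s via the Markov property. Choose $L^* \asymp K^{j-1/2}/2$ so that the boxes $V_{L^*}(v_0^{(i)})$ are pairwise disjoint and contained in $V_{4K^j}(z_B) \subseteq V$. Let $\mathcal G$ be the $\sigma$-algebra generated by $\eta^V$ outside $\bigcup_i (V_{L^*}(v_0^{(i)}) \setminus \partial V_{L^*}(v_0^{(i)}))$; Lemma \ref{lem:DMP} gives orthogonal decompositions $\eta^V|_{V_{L^*}(v_0^{(i)})} = H_i + \tilde \eta_i$ in which the $\tilde \eta_i$ are conditionally independent DGFFs. Introduce the $\mathcal G$-measurable event $F_i := \{|H_i(x) - H_i(v_0^{(i)})| \le 1 \text{ for all } x \in D_i\}$. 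On $F_i$, any $(V,\lambda,\alpha)$-open crossing of $D_i$ is automatically a $(V_{L^*}(v_0^{(i)}), \lambda + 1, H_i(v_0^{(i)}) + \alpha)$-open crossing for $\tilde \eta_i$. Choosing $K_0(\lambda) = e^{c\lambda^2}$ with $c$ large enough that $L^*/w_\Pi \ge e^{c'(\lambda+1)^2}$ (where $c'$ is the constant of Lemma \ref{lem:para-crossing}), Lemma \ref{lem:para-crossing} applied to $\tilde \eta_i$ yields
\[
\mathbf 1_{F_i}\, \mathbb P\bigl(D_i \text{ has an open crossing} \,\big|\, \mathcal G\bigr) \le \tfrac{7}{8}\, \mathbf 1_{F_i}.
\]

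Combining this with the conditional independence under $\mathcal G$ of the events ``$D_i$ is crossed'' gives $\mathbb P\bigl(\bigcap_i \{D_i \text{ crossed}\} \cap \bigcap_i F_i\bigr) \le (7/8)^s$. Meanwhile, Lemma \ref{lem:fluct-H} applied with side lengths $L = L^* \asymp K^{j-1/2}$, $\ell \asymp K^{j-1}$ and $\varepsilon = 1 \gg C_3 K^{-1/4}$ yields $\mathbb P(F_i^c) \le 4 e^{-c'' \sqrt K}$ for a universal $c''>0$. Summing over $i$,
\[
\mathbb P\bigl(P \text{ is open for some } P \in T_j(B,B')\bigr) \le (7/8)^{\sqrt K} + 4\sqrt K\, e^{-c''\sqrt K} \le e^{-0.015 \sqrt K}
\]
for $K$ sufficiently large, since $\log(8/7) \approx 0.133 \gg 0.015$. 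The step I expect to be most delicate is the geometric construction in the first paragraph --- arranging the $D_i$'s to be simultaneously ``good'' parallelograms, each crossed by every tame path in $T_j(B,B')$, and with enclosing boxes $V_{L^*}(v_0^{(i)})$ that are pairwise disjoint and fit inside $V$.
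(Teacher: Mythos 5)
Your proposal follows exactly the same strategy as the paper's proof: construct a long thin parallelogram containing $\bigcup_{P\in T_j(B,B')}\tilde E(P)$, carve out $\Theta(\sqrt K)$ well-separated good sub-parallelograms $D_i$ each of which must be crossed, decouple them via the Markov property, control the conditional mean $H_i$ by Lemma~\ref{lem:fluct-H}, and invoke Lemma~\ref{lem:para-crossing} with the shifted level $\lambda+\varepsilon$ to force $(7/8)^{\sqrt K}$ decay. The geometric facts you label as ``the most delicate step'' are precisely what the paper packages into (G1)--(G3) and likewise does not spell out in full.

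There is one concrete gap in your final estimate. You set the fluctuation threshold $\varepsilon=1$ in the event $F_i$, and Lemma~\ref{lem:fluct-H} then gives $\mathbb P(F_i^c)\le 4\exp\{-\varepsilon^2 L/(8C_2\ell)\}$ with $\varepsilon^2L/(8C_2\ell)\asymp K^{1/2}/C_2$; hence your $c''$ is proportional to $1/C_2$, which is a universal constant but not one you control. Your closing inequality $(7/8)^{\sqrt K}+4\sqrt K\,e^{-c''\sqrt K}\le e^{-0.015\sqrt K}$ is justified only by $\log(8/7)\gg 0.015$, which bounds the first term but says nothing about the second: if $C_2$ is large enough that $c''\le 0.015$, the second term dominates and the inequality fails for \emph{every} $K$, not just small ones. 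The paper avoids this by taking the threshold $\varepsilon_0=100\sqrt{C_2}$ proportional to $\sqrt{C_2}$, so that the exponent $\varepsilon_0^2L/(8C_2\ell)$ becomes a numerical multiple of $\sqrt K$ with no hidden dependence on $C_2$ (yielding $\mathbb P(\mathcal E_0)\le e^{-0.5\sqrt K}$), at the cost of requiring $K_0(\lambda)\gtrsim e^{2c'(\lambda+\varepsilon_0)^2}$ in the application of Lemma~\ref{lem:para-crossing}, which is absorbed into the constant $c(\lambda_0)$ in $K_0(\lambda)=e^{c\lambda^2}$. Replacing your $\varepsilon=1$ by $\varepsilon\asymp\sqrt{C_2}$ (and correspondingly $\lambda+1$ by $\lambda+\varepsilon$) closes the gap.
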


\begin{proof}[Proof of Theorem \ref{thm:r=0}, assuming Proposition \ref{prop:BB'}]
	 Note that for $B\in\mathrm{END}_j$, one can find at most $K^5$ boxes $B'$'s in $\mathrm{END}_j$ such that $T_j(B,B')\neq\emptyset$. By a union bound, for $K\ge K_0(\lambda)$,
	\begin{equation*}
	\mathbb P\Big(P \text{ is } (V,\lambda,\alpha)\text{-open for some } P\in T_j(B)\Big)
	\le  K^5e^{-0.015\sqrt{K}}\le e^{-0.01\sqrt{K}}.
	\end{equation*}
	This completes the proof.
\end{proof}

We formulate ingredients to prove Proposition \ref{prop:BB'} in the remaining context of this section. In what follows, we will always assume that $B, B' \in \mathrm{END}_j$ and $T_j(B,B')\neq\emptyset$. Let $(x,y)$ and $(x',y')$ be lower-left corners of $B$ and $B'$, respectively. Without loss of generality, suppose that $x'-x\ge y'-y\ge 0$.
Then, it is not hard to show the following geometric facts hold for $K\ge 2^{32}$ (see Figure \ref{fig:LDP}).
\begin{enumerate}
	\item[(G1)]
	One can find a parallelogram $D$ with width $w=20K^{j-1}$ and length $K^j/4$ such that every path in $T_j(B,B')$ contains a crossing of $D$, recalling Definition \ref{def:tame} for the tame path and noting $B, B' \in \mathrm{END}_j$;
	\item[(G2)]
	One can extract good $D_i$'s from $D$ for $i\in[\sqrt{K}/8]$ with width $w=20K^{j-1}$ and length $l=16w$ such that $D_i\subseteq V_{4l}(v_i)\subseteq V_i$ for each $i$, where $v_i$ is the anchor of $D_i$, $L=K^{j-1/2}$, $V_i:=V_L(v_i)$, and $V_i$'s are disjoint.
	\item[(G3)]
     Let $U=\cup_i V_i$. Then $U\subseteq V_{4K^j}(z_{B})\subseteq V$, where $V$ is the set in the statement of Theorem \ref{thm:r=0}.
\end{enumerate}
Set $\mathcal{F}_\partial:=\{ \eta^{V}(z):z\in (V\backslash U)\cup\partial U\}$.
Let $H_{\partial}(z):=\mathbb E\left(\eta^{V}(z)\big|\mathcal{F}_\partial\right)$ and
$
\eta^{V_i}(z):=\eta^{V}(z)-H_{\partial}(z) \text{ for all } z\in V_i.
$
By Markov property (Lemma \ref{lem:DMP}), we know that
$
\eta^{V_i}=\{\eta^{V_i}(z):z\in V_i \}
$
is a DGFF on $V_i$ for each $i\in [\sqrt{K}/8]$, and $\eta^{V_i}$'s are mutually independent by (G2), and they are independent of $H_{\partial}$.
Set
$
\varepsilon_0=100\sqrt{C_2},
$
where $C_2$ is defined in Lemma \ref{lem:H^2}.
Denote
\begin{equation}
\mathcal{E}_0=\Big\{ \big|H_{\partial}(z)-H_{\partial}(v_{i})\big|\ge \varepsilon_0 \text{ for some } i\in [\sqrt{K}/8] \text{ and }z\in D_i \Big\}.
\end{equation}
Set
$
C_5=(2\vee C_4)^{32},
$
where $C_4$ is defined in Lemma \ref{lem:dudley}.

\begin{lem}\label{lem:E0}
	Let $K\ge C_5$. Then, $\mathbb P\big(\mathcal{E}_0\big)\le e^{-0.5\sqrt{K}}$.
\end{lem}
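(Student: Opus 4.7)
The plan is to apply Lemma~\ref{lem:fluct-H} separately on each box $V_i$ and then union-bound over the $\sqrt{K}/8$ indices. The key preliminary observation is that by the construction in (G2) the $V_i$'s are not merely disjoint but separated by a gap of order $L$ (each $V_i$ sits in the middle of a section of $D$ of length roughly $2L$ while having side $L$); consequently $\partial V_i \subseteq \partial U$ for every $i$, so the harmonic extension on $V_i \setminus \partial V_i$ that defines $H_\partial$ uses only data in $\mathcal{F}_\partial$ and therefore agrees with the harmonic function $H^{V_i}$ produced by the Markov decomposition of Lemma~\ref{lem:DMP} applied with $D=V$ and $B=V_i$.

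With this identification in hand, I would invoke Lemma~\ref{lem:fluct-H} with reference point $z = v_i$ and inner box $V_{4l}(v_i)$, which by (G2) contains both $v_i$ and all of $D_i$. The parameters are $L = K^{j-1/2}$ and $\ell = 4l = 64w = 1280\,K^{j-1}$, so $\ell/L = 1280/\sqrt{K}$. Two hypotheses must be verified: $V_{4l}(v_i) \subseteq V_i^\chi$ amounts to $64w \leq (1-2\chi)L$, i.e.\ $\sqrt{K} \geq 1600$; and $\varepsilon_0 \geq C_3\sqrt{\ell/L}$ reduces, after plugging in $\varepsilon_0 = 100\sqrt{C_2}$ and $C_3 = 2 C_4 \sqrt{C_2}$, to $K^{1/4} \geq C_4\sqrt{1280}/50$. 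Both follow from $K \geq C_5 = (2 \vee C_4)^{32}$.

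Lemma~\ref{lem:fluct-H} then yields, for each $i$,
\[
\mathbb{P}\bigl(|H_\partial(z) - H_\partial(v_i)| \geq \varepsilon_0 \text{ for some } z \in D_i\bigr) \leq 4 \exp\!\left(-\frac{\varepsilon_0^2 L}{8 C_2\, \ell}\right) = 4 \exp\!\left(-\frac{10000\sqrt{K}}{10240}\right) \leq 4 e^{-0.97\sqrt{K}},
\]
and a union bound over the $\sqrt{K}/8$ indices absorbs the polynomial prefactor into the exponential once $K \geq C_5$, delivering the required $\mathbb{P}(\mathcal{E}_0) \leq e^{-0.5\sqrt{K}}$. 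The whole argument is essentially arithmetic bookkeeping; the only step that genuinely requires care is the opening identification of $H_\partial|_{V_i}$ with $H^{V_i}$, which relies on the $V_i$'s being separated enough that each inner boundary $\partial V_i$ lies in $\partial U$.
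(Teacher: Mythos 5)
Your proof is correct and follows the same route as the paper's: apply Lemma~\ref{lem:fluct-H} on each $V_i$ with $\ell = 4l$ and $L = K^{j-1/2}$, verify the two hypotheses, then union-bound over the $\sqrt K/8$ indices. Your opening observation that $H_\partial$ restricted to the interior of $V_i$ agrees with the single-box harmonic function $H^{V_i}$ of Lemma~\ref{lem:DMP}---because the $V_i$'s are separated so that a walk exiting $V_i$ has already exited $U$---is a point the paper leaves implicit, and it is precisely what makes Lemma~\ref{lem:fluct-H} (which is stated for a box $B$) applicable to $H_\partial$.
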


\begin{proof}
	Recall that $w=20K^{j-1}$, $l=16w$, $L=K^{j-1/2}$, and $C_3=2C_4\sqrt{C_2}$. For $K\ge C_5$, we have $C_3\sqrt{\frac{4l}{L}}\le\varepsilon_0$.
	Setting $\ell=4l$ and $\varepsilon=\varepsilon_0$ in Lemma \ref{lem:fluct-H}, we have
	\begin{align*}
	&\mathbb P\Big(\big|H_{\partial}(z)-H_{\partial}(v_{i})\big|\ge \varepsilon_0 \text{ for some } z\in D_i \Big)\\
	\le&\mathbb P\Big(\big|H_{\partial}(z)-H_{\partial}(v_{i})\big|\ge \varepsilon_0 \text{ for some } z\in V_{4l}(v_i)\Big)
	\le 4\exp\left\{-\frac{\varepsilon_0^2 L}{32C_2 l} \right\}\le e^{-0.9\sqrt{K}},
	\end{align*}
	where we have used (G2) in the first inequality, and $K\ge C_5\ge 2^{32}$ in the last inequality. By a union bound,
	$
	\mathbb P\big(\mathcal{E}_0\big)\le \frac18 \sqrt{K}e^{-0.9\sqrt{K}}\le e^{-0.5\sqrt{K}}.
	$
\end{proof}

 \begin{figure}
	\centering
	\includegraphics[scale=0.5]{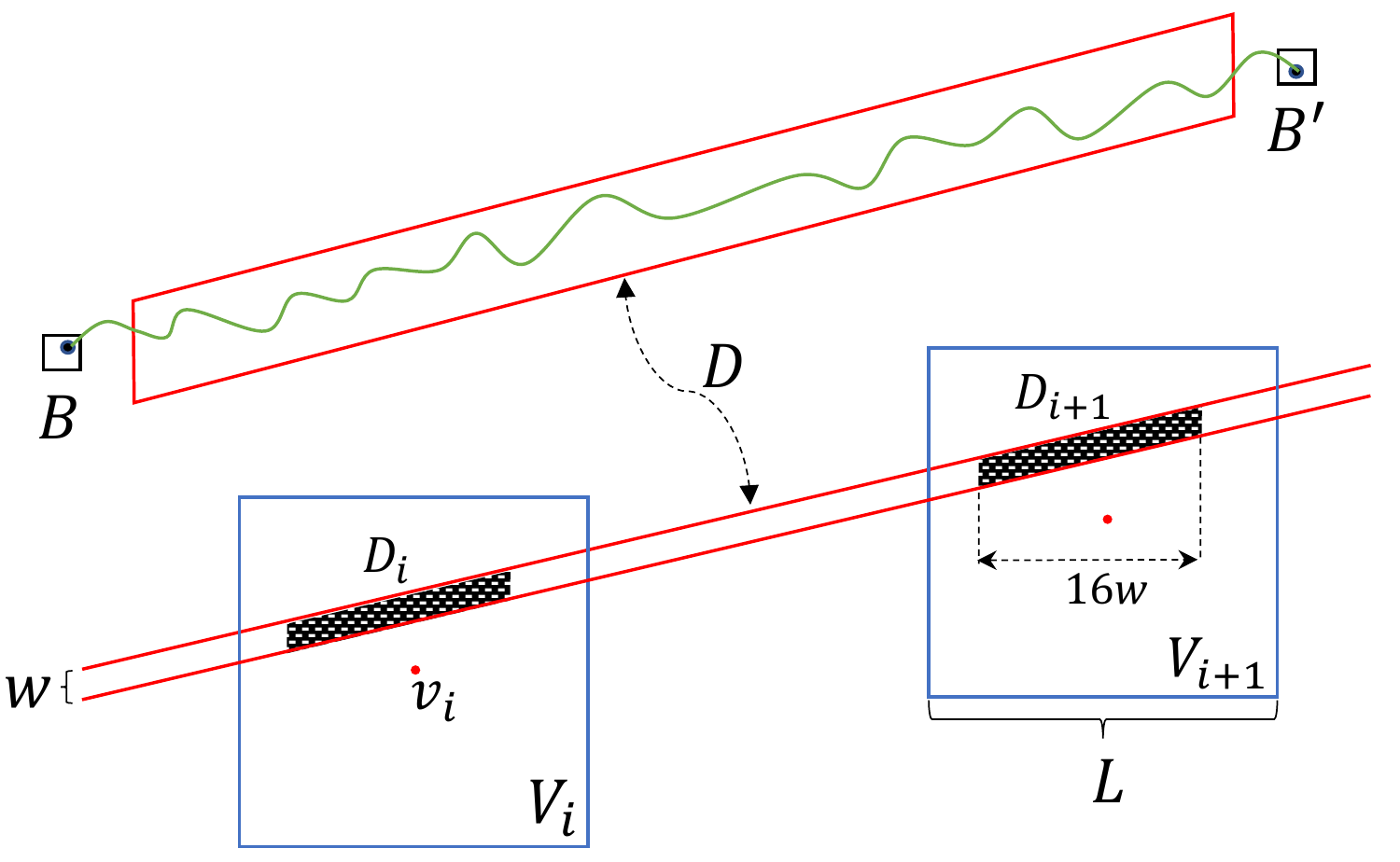}
	\caption{$D$ is the (red) parallelogram, with aspect ratio $O(K)$. 
The parallelograms with (black) shading are $D_i$'s, with aspect ratio $O(1)$. The (blue) squares are $V_i$'s.}
	\label{fig:LDP}
\end{figure}

\begin{proof}[Proof of Proposition \ref{prop:BB'}]
	Let $w=20K^{j-1}$ and $L=K^{j-1/2}$ as above.
	Let $c=c(\lambda_0)$ be a constant such that 
	\begin{equation}\label{eq:K_0}
	K_0(\lambda):=e^{c\lambda^2}\ge 400e^{2c'(\lambda+\varepsilon_0)^2}\vee C_5.
	\end{equation}
	 For $K\ge K_0(\lambda)$, we have
     $
	 L/w\ge e^{c'(\lambda+\varepsilon_0)^2}.
	$
	Then by (G2) and Lemma \ref{lem:para-crossing}, for each $\alpha$,
	\begin{equation}\label{eq:first-term}
			\mathbb P\big( \mathcal A\left(D_i,V_i,\lambda+\varepsilon_0,\alpha\right) \big)\le \frac78 \quad \text{ for all } i,
	\end{equation}
	recalling that $\mathcal A\left(D_i,V_i,\lambda+\varepsilon_0,\alpha\right)$ is the event that there exists a $\left(V_i,\lambda+\varepsilon_0,\alpha\right)$-open crossing of $D_i$ in $V_i$.
	 Note that for $z\in D_i$,
	 \[
	 \eta^{V}(z)=\eta^{V_i}(z)+H_{\partial}(z)
	 =\big(\eta^{V_i}(z)+H_{\partial}(v_i)\big)+\big(H_{\partial}(z)-H_{\partial}(v_i)\big).
	 \]
	 By the triangle inequality, if $|H_{\partial}(z)-H_{\partial}(v_i)|\le\varepsilon_0$ for all $z\in D_i$, then $\left|\eta^{V}(z)+\alpha\right|\le\lambda$
	 implies
	 $
	 \big|\eta^{V_i}(z)+\alpha+H_{\partial}(v_i)\big|\le\lambda+\varepsilon_0.
	 $
	 Thus, on the event $\mathcal{E}_0^c$,
	 \[
	 \mathcal A\subseteq \bigcap_{i\in [\sqrt{K}/8]}\mathcal A\big(D_i,V,\lambda,\alpha\big)
	 \subseteq \bigcap_{i\in [\sqrt{K}/8]}\mathcal A_i,
	 \]
	 where $\mathcal A:=\mathcal A\big(D,V,\lambda,\alpha\big)$, $\mathcal A_i:=\mathcal A\big(D_i,V_i,\lambda+\varepsilon_0,\alpha+H_{\partial}(v_{i})\big)$, and $H_{\partial}(v_{i})$ is regarded as a constant with respect to the DGFF $\eta^{V_i}$ for independence. Therefore,
	 \begin{equation}\label{eq:mathcal-A}
	 \mathbb P\big( \mathcal A \big)\le \mathbb P\Big( \bigcap_{i}\mathcal A_i \Big)+\mathbb P\big( \mathcal{E}_0 \big),
	 \end{equation}
	 where the intersection is over $i\in [\sqrt{K}/8]$. By \eqref{eq:first-term},
     \begin{equation}\label{eq:intersection}
     \mathbb P\Big( \bigcap_{i}\mathcal A_i \Big)
     \le \mathbb E\Big(\mathbb P\Big(\bigcap_i\mathcal A_i\Big| \mathcal{F}_\partial\Big)\Big)
     \le \mathbb E \prod_i \mathbb P\big(\mathcal A_i\big| \mathcal{F}_\partial\big)
     \le \left(\frac78\right)^{\lfloor\sqrt{K}/8\rfloor},
     \end{equation}
     where we have used the conditional independence of $\mathcal A_i$ given $\mathcal{F}_\partial$.
	 Combining (G1), \eqref{eq:mathcal-A}, \eqref{eq:intersection} and Lemma \ref{lem:E0}, for $K\ge K_0(\lambda)\ge C_5\ge 2^{32}$,
	 \[
	 \mathbb P\Big(P \text{ is } (V,\lambda,\alpha)\text{-open for some } P\in T_j(B,B')\Big)
	 \le \left(\frac78\right)^{\lfloor\sqrt{K}/8\rfloor}+e^{-0.5\sqrt{K}} \le e^{-0.015\sqrt{K}}.
	 \]
	 This completes the proof.
\end{proof}

\section{Multi-scale analysis on the hierarchical structure of the path}\label{sec:Multi-scale analysis}
In this section, we will prove Theorem \ref{thm:1.1}. It suffices to prove the theorem for $\lambda\ge\lambda_0$ with $\lambda_0>0$ fixed, see Remark~\ref{rem:lambda_0}.
Note that if there is a $\lambda$-open path $P$ in $\mathcal{P}^{\kappa,\delta,K}$, then all nodes in $\mathcal{T}_P$ are $\lambda$-open. We will prove that this event has probability tending to $0$ as the depth of the tree $m$ tends to infinity, by showing that tame and open nodes are rare (Theorem \ref{thm:xi-bound} below). Note that untamed nodes are rare by Lemma \ref{lem:untame-flow}.

Let $j\in[m-1], B\in \mathrm{END}_j$.
For each $P\in\mathcal P_j(B)$, recall that $\mathcal T_P$ is a tree of depth $j$, associated with $P$. Each node $u$ of $\mathcal T_P$ is identified with a sub-path of $P$, which is also denoted by $u$ to lighten notation. Let $\mathcal T_{P,r}$ be the collection of nodes of level $r$. Note that the root has level $0$. For each $u\in \mathcal T_{P,r}$, there is a unique starting box $B_u\in \mathrm{END}_{j-r}$ containing the starting point of $u$.
Let $\mathscr A$ be the collection of real functions defined on $\cup_{j\in[m-1]}\mathrm{END}_{j}$, i.e., on all end-boxes. We always assume that $\bar\alpha$ is a real function in $\mathscr A$. Note that for any $P$, $\bar\alpha$ induces a function on $\mathcal T_P$ by setting $\bar\alpha_u:=\bar\alpha\big(B_u\big)$ for each $u\in \mathcal T_{P}$.
Let $\theta_P$ be the unit uniform flow on $\mathcal T_P$ from $\rho$ to $\mathcal{L}$ (the definition is just before \eqref{eq:kap-delta-K}), where $\rho$ is the root and $\mathcal{L}$ is the set of leaves. For $\lambda>0, V_{4K^j}(z_{B})\subseteq V\subseteq V_{2N}$, define
\begin{gather}
Y_{P,r,\lambda,\bar\alpha}:=\sum_{u\in \mathcal T_{P,r}}\theta_P(u)1_{\{u \text{ is tame and } (V,\lambda,\bar\alpha_u)\text{-open}\}}, \label{eq:Y-Pr}\\
\xi_{r,\lambda,\bar\alpha,j,B}:=\max\big\{Y_{P,r,\lambda,\bar\alpha}: P\in \mathcal P_j(B)\big\}.\label{eq:xi-Pr}
\end{gather}


Recall $\lambda\ge\lambda_0$ and $K_0(\lambda)=e^{c\lambda^2}$ as a function of $\lambda$ for some $c=c(\lambda_0)>0$.
Noting that
 \[
 \xi_{0,\lambda,\bar\alpha,j,B}=1_{\left\{P \text{ is } (V,\lambda,\bar\alpha_{\rho})\text{-open for some } P\in \mathcal T_j(B) \right\}},
 \]
the next corollary restates Theorem \ref{thm:r=0}.

\begin{cor}\label{cor:r=0}
	Suppose $K\ge K_0(\lambda)$, $j\in[m-1]$, $B\in \mathrm{END}_j$, $V_{4K^j}(z_{B})\subseteq V\subseteq V_{2N}$, and $\bar\alpha\in\mathscr A$. Then,
	\begin{equation}
	\mathbb P(\xi_{0,\lambda,\bar\alpha,j,B}>0)\le e^{-0.01\sqrt{K}}.
	\end{equation}
\end{cor}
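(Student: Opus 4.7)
The plan is to recognize that this corollary is a direct notational rephrasing of Theorem~\ref{thm:r=0}, so the proof reduces to unwinding the definitions at level $r=0$ and then invoking the theorem.

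First I will spell out $Y_{P,0,\lambda,\bar\alpha}$. By construction of the tree $\mathcal{T}_P$, the level-$0$ set $\mathcal{T}_{P,0}$ contains only the root $\rho$, which is identified with $P$ itself, and the unit uniform flow satisfies $\theta_P(\rho)=1$. Moreover, the starting box $B_\rho$ of the root is precisely the box $B$ containing $x_P$, so $\bar\alpha_\rho=\bar\alpha(B)$. Consequently
\begin{equation*}
Y_{P,0,\lambda,\bar\alpha}=1_{\{P\text{ is tame and }(V,\lambda,\bar\alpha(B))\text{-open}\}}.
\end{equation*}

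Next I will take the maximum over $P\in\mathcal{P}_j(B)$ and use the definition $T_j(B)=\{P\in\mathcal{P}_j(B):P\text{ is tame}\}$ to conclude that
\begin{equation*}
\{\xi_{0,\lambda,\bar\alpha,j,B}>0\}=\{P\text{ is }(V,\lambda,\bar\alpha(B))\text{-open for some }P\in T_j(B)\}.
\end{equation*}
Since $\bar\alpha(B)$ is a deterministic real number, applying Theorem~\ref{thm:r=0} with the choice $\alpha:=\bar\alpha(B)$ immediately gives the bound $e^{-0.01\sqrt{K}}$. There is no real obstacle here; the only point worth flagging is that although $\bar\alpha$ is defined as a function on end-boxes, the event under consideration depends on it only through the scalar value $\bar\alpha(B)$, which is precisely what makes Theorem~\ref{thm:r=0} (stated for a single real $\alpha$) directly applicable.
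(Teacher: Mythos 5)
Your proposal is correct and takes essentially the same route as the paper: the paper simply observes that $\xi_{0,\lambda,\bar\alpha,j,B}$ is the indicator of the event that some $P\in T_j(B)$ is $(V,\lambda,\bar\alpha_\rho)$-open and declares the corollary a restatement of Theorem~\ref{thm:r=0}. You spell out the same unwinding (level-$0$ node is the root $\rho=P$ with flow $\theta_P(\rho)=1$ and $\bar\alpha_\rho=\bar\alpha(B)$) and then apply the theorem with $\alpha=\bar\alpha(B)$.
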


As for $r=1$, we have a similar result. Set
$
\varepsilon_{1}=8\sqrt{C_2}$, $ K_1(\lambda)=K_0(\lambda+\varepsilon_{1}).
$
We will prove the following theorem in Section \ref{subsec:4.1}.

\begin{thm}\label{thm:r=1}
	Suppose $K\ge K_1(\lambda)$, $j\in[2,m-1]\cap\mathbb Z$, $B\in \mathrm{END}_j$, $V_{4K^j}(z_{B})\subseteq V\subseteq V_{2N}$, and $\bar\alpha\in\mathscr A$. Then,
	\[
	\mathbb P(\xi_{1,\lambda,\bar\alpha,j,B}>\delta)
	\le e^{-K^{1/8}} \quad  \text{ for all } \delta\ge\delta_1:=\frac12.
	\]
\end{thm}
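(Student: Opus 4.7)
The plan is to bootstrap from Theorem~\ref{thm:r=0} (the $r=0$ case) via a Chernoff-type large-deviation bound on the level-$1$ children of the root of $\mathcal{T}_P$. By Proposition~\ref{prop:tree}(a) the root of any $P\in\mathcal{P}_j(B)$ has $d\geq K$ children in $\mathcal{SL}_{j-1}$, each carrying uniform flow weight $1/d$; by (b) each box in $\mathcal{BD}_{K^{j-1}}$ contains starting points of at most $12$ of them. Hence $Y_{P,1,\lambda,\bar\alpha}>\delta$ forces at least $\delta K/12$ distinct boxes $W$ in the finite collection
\[
\mathcal{W}:=\bigl\{W\in\mathcal{BD}_{K^{j-1}}:W\cap V_{3K^j}(z_B)\neq\emptyset\bigr\},\qquad |\mathcal{W}|=O(K^2),
\]
to contain the start of a tame $(V,\lambda,\bar\alpha(B'))$-open path in some $T_{j-1}(B')$ for an end-box $B'\in\mathrm{END}_{j-1}$ near $W$. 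Write $E_W$ for this event. The proof then reduces to bounding $\mathbb{P}\bigl(|\{W\in\mathcal{W}:E_W\}|\geq \delta K/12\bigr)$.

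To decouple the $E_W$'s I enlarge each $W$ to $\widetilde W:=V_{L_1}(z_W)$ for $L_1\asymp K^{j-1}$ large enough that $V_{4K^{j-1}}(z_{B'})\subseteq\widetilde W$ for every $B'$ contributing to $E_W$, and partition $\mathcal{W}$ into $M=O(1)$ sparse subcollections $\mathcal{W}^{(1)},\dots,\mathcal{W}^{(M)}$ whose enlarged boxes are pairwise disjoint within each subcollection. Applying Lemma~\ref{lem:DMP} to the disjoint union $\bigsqcup_{W\in\mathcal{W}^{(m)}}\widetilde W$ yields orthogonal decompositions $\eta^V(z)=\eta^{\widetilde W}(z)+H^{\widetilde W}(z)$ on each $\widetilde W$, with the fields $\{\eta^{\widetilde W}\}_{W\in\mathcal{W}^{(m)}}$ mutually independent and jointly independent of $\mathcal{F}_\partial:=\sigma(H^{\widetilde W}:W\in\mathcal{W}^{(m)})$. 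Lemma~\ref{lem:fluct-H} applied with $\ell=L_1$, $L\geq 4K^j$ and $\varepsilon=\varepsilon_1=8\sqrt{C_2}$, together with a union bound over $\mathcal{W}$, forces the exceptional event
\[
\mathcal{E}_1:=\bigcup_{W\in\mathcal{W}}\bigl\{|H^{\widetilde W}(z)-H^{\widetilde W}(z_W)|>\varepsilon_1 \text{ for some } z\in\widetilde W\bigr\}
\]
to have probability at most $O(K^2)\cdot 4e^{-\Omega(K)}\leq e^{-K/2}$.

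On $\mathcal{E}_1^c$ the triangle inequality upgrades $E_W$ to the event $E_W'$ that some tame path in some $T_{j-1}(B')$ (with $B'$ near $W$) is $(\widetilde W,\lambda+\varepsilon_1,\bar\alpha(B')+H^{\widetilde W}(z_W))$-open. Conditionally on $\mathcal{F}_\partial$ this depends only on $\eta^{\widetilde W}$, so Theorem~\ref{thm:r=0} applied to $\eta^{\widetilde W}$ (admissible because $K\geq K_1(\lambda)=K_0(\lambda+\varepsilon_1)$ and $V_{4K^{j-1}}(z_{B'})\subseteq\widetilde W\subseteq V$) combined with a union bound over the $O(K^4)$ candidate end-boxes $B'$ near $W$ gives
\[
\mathbb{P}(E_W'\mid\mathcal{F}_\partial)\leq K^{4}\,e^{-0.01\sqrt{K}}\leq e^{-0.005\sqrt{K}}.
\]
Crucially, within each $\mathcal{W}^{(m)}$ the $E_W'$ are conditionally mutually independent given $\mathcal{F}_\partial$.

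To close the argument, pigeonhole converts $|\{W\in\mathcal{W}:E_W\}|\geq\delta K/12$ into $|\{W\in\mathcal{W}^{(m)}:E_W'\}|\geq\delta K/(12M)\geq K/C_0$ for some $m$ and absolute constant $C_0$; a conditional Chernoff/union bound then gives
\[
\mathbb{P}\bigl(\xi_{1,\lambda,\bar\alpha,j,B}>\delta\bigr)\leq\mathbb{P}(\mathcal{E}_1)+M\binom{|\mathcal{W}^{(m)}|}{\lceil K/C_0\rceil}\bigl(e^{-0.005\sqrt{K}}\bigr)^{\lceil K/C_0\rceil}\leq e^{-K^{1/8}}
\]
for all $K\geq K_1(\lambda)$ sufficiently large, since the dominant $-\Omega(K^{3/2})$ exponent absorbs the $e^{O(K\log K)}$ binomial factor, uniformly in $\delta\geq 1/2$. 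The main obstacle I anticipate is parameter bookkeeping rather than a deep obstruction: the enlargement scale $L_1$ must be large enough for disjointness within each $\mathcal{W}^{(m)}$ and for the geometric inclusion $V_{4K^{j-1}}(z_{B'})\subseteq\widetilde W\subseteq V$ required by Theorem~\ref{thm:r=0}, yet small enough relative to $L$ (and fitting inside $V^\chi$) that $\varepsilon_1$ remains an absolute constant, keeping $K_1$ a function of $\lambda$ only, which is essential for the induction on the level $r$ carried out later in Theorem~\ref{thm:xi-bound}.
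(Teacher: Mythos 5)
Your overall architecture mirrors the paper's — decompose $\eta^V$ on enlarged boxes via the Markov property, use Theorem~\ref{thm:r=0} conditionally, control harmonic fluctuations with Lemma~\ref{lem:fluct-H}, and close with a Chernoff/union-type bound over a sparse grouping — but there is a genuine gap in the choice of the enlargement scale $L_1$, and your stated application of Lemma~\ref{lem:fluct-H} is inconsistent with the Markov decomposition you actually set up.

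You take $\widetilde W = V_{L_1}(z_W)$ with $L_1\asymp K^{j-1}$, and you decompose $\eta^V = \eta^{\widetilde W} + H^{\widetilde W}$ on $\widetilde W$. In Lemma~\ref{lem:fluct-H} the parameter $L$ must be the side length of the box on which the harmonic function $H^B$ lives, i.e.\ $L = L_1$ here, \emph{not} the side length $\gtrsim 4K^j$ of the ambient $V$; Lemma~\ref{lem:H^2} gives $\mathbb{E}(H^{\widetilde W}(x)-H^{\widetilde W}(y))^2\lesssim |x-y|_\infty/L_1$, which depends only on $\widetilde W$, not on $V$. With $\ell\asymp 4K^{j-1}$ (the sub-box $V_{4K^{j-1}}(z_{B'})$ over which you must control the fluctuation) and $L_1\asymp K^{j-1}$, the ratio $\ell/L_1$ is an absolute constant. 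Then Lemma~\ref{lem:fluct-H} yields only a constant-order bound $4\exp\{-\varepsilon_1^2 L_1/(8C_2\ell)\}=O(1)$ on the fluctuation event, so $\mathbb{P}(\mathcal{E}_1)$ does \emph{not} decay in $K$; your claimed $O(K^2)\cdot 4e^{-\Omega(K)}$ is based on plugging $L\geq 4K^j$ into the wrong slot. With $\mathbb{P}(\mathcal{E}_1)$ of constant order the argument collapses, since the final probability can then only be bounded by a constant. This is precisely the tension the paper resolves by choosing a strictly intermediate scale $V_i = V_{K^{j-7/8}}(z_i)$: then $\ell/L = 4K^{-1/8}\to 0$, giving the decay $4\exp\{-2K^{1/8}\}$ in Lemma~\ref{lem:E-1} and explaining the exponent $K^{1/8}$ in the statement of Theorem~\ref{thm:r=1}.

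Once you adopt the correct intermediate scale, a secondary adjustment is forced: the number of sparse subcollections $M$ is no longer $O(1)$ but $\Theta(K^{1/4})$ (times the constant $12$ from Proposition~\ref{prop:tree}(b)), because boxes of side $K^{j-7/8}$ centered on a grid of mesh $K^{j-1}$ overlap more; this is exactly the paper's $\beta_K^{-1}=48K^{1/4}$ grouping. Your pigeonhole/Chernoff argument still closes with the resulting powers of $K$ (the dominant exponent remains super-polynomial in $K$), so the remaining fix is numerical, but without it the per-group counts and the binomial factor are misestimated. In short: the decoupling-and-Chernoff skeleton is essentially the paper's, but the crucial choice of decomposition scale $K^{j-7/8}$ (rather than $K^{j-1}$) is missing, and this is the key technical step that makes the harmonic-fluctuation event negligible.
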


Before generalizing Theorem \ref{thm:r=1}, let us set our conventions for constants.
Set
\begin{equation}\label{eq:beta-c_r}
\beta=2^{-9}\ \text{ and }\  c_r=(\beta K)^r.
\end{equation}
Define $\{\delta_r : r\ge0\}$ to be
\begin{gather}
\delta_0=0, \quad \delta_1=\frac12; \quad \delta_{r+1}=\delta_r+\Delta_r \ \text{ for all } r\ge 1,\label{eq:delta}\\
\text{ where } \Delta_1=\frac{9\log K}{\beta K^{1/8}}; \quad \Delta_{r+1}=\frac{\log(1+2c_{r})+9\beta^{-1}\log K}{c_{r}}\ \text{ for all } \ r\ge 1.\label{eq:Delta}
\end{gather}
Set
\begin{gather}
\varepsilon_0=100\sqrt{C_2}, \quad \varepsilon_1=8\sqrt{C_2}; \quad \varepsilon_{r+1}=4\sqrt{C_2}\beta^{r/2}\ \text{ for all } \ r\ge 1, \label{eq:epsilon}\\
K_{r+1}(\lambda)=K_r(\lambda+\varepsilon_{r+1})=K_0\left(\lambda+\sum_{i=1}^{r+1}\varepsilon_i\right)
\ \text{ for all } r\ge 0.\label{eq:Kr}
\end{gather}

We will prove the following theorem by induction on admissible pair $(r,j)$ in Section \ref{subsec:4.2}.
\begin{thm}\label{thm:xi-bound}
	The following holds for any pair $(r,j)$ satisfying $r\in[2,m-2]\cap\mathbb Z$ and $j\in[r+1,m-1]\cap\mathbb Z$. For all $K\ge K_r(\lambda)$, $B\in \mathrm{END}_j$, $V_{4K^j}(z_{B})\subseteq V\subseteq V_{2N}$, and $\bar\alpha\in\mathscr A$, we have
	\[
	\mathbb{P}(\xi_{r,\lambda,\bar\alpha,j,B}>\delta)
	\le 2e^{-c_{r-1}(\delta-\delta_r)}
	\quad \text{for all } \delta\ge\delta_r.
	\]
\end{thm}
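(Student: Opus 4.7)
The plan is to induct on $r$, the base case $r=2$ being handled by Theorem~\ref{thm:r=1} combined with the Chernoff step described below. For the inductive step, assume the theorem at level $r-1$ (for all admissible $j'$) and fix an admissible pair $(r,j)$, together with $B$, $V$, and $\bar\alpha$ as in the statement. First I would use Proposition~\ref{prop:tree} to decompose any path $P\in\mathcal P_j(B)$ through its $d_P\ge K$ children $P^{(1)},\dots,P^{(d_P)}$: identifying level-$r$ nodes of $\mathcal T_P$ with level-$(r-1)$ nodes of the $\mathcal T_{P^{(i)}}$ and using the unit-uniform-flow definition of $\theta_P$,
\[
Y_{P,r,\lambda,\bar\alpha}=\frac{1}{d_P}\sum_{i=1}^{d_P}Y_{P^{(i)},r-1,\lambda,\bar\alpha}.
\]
Hence $\xi_{r,\lambda,\bar\alpha,j,B}$ is essentially a maximum of averages of the variables to which the inductive hypothesis applies, provided one can couple these variables to a single realisation of $\eta^V$.

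The bridge to the inductive hypothesis is the Markov property. For each child $i$ I would set $V_i:=V_{4K^{j-1}}(z_{B_{P^{(i)}}})\subseteq V$ and split $\eta^V=H^{V_i}+\eta^{V_i}$ via Lemma~\ref{lem:DMP}. On the small-fluctuation event
\[
\mathcal E_r:=\Big\{|H^{V_i}(z)-H^{V_i}(v_i)|\le\varepsilon_r\text{ for all }z\in V_i\text{ and all children }i\Big\},
\]
with $v_i$ a reference vertex in $B_{P^{(i)}}$, each $(V,\lambda,\bar\alpha_u)$-open vertex $u$ in the subtree rooted at $P^{(i)}$ is $(V_i,\lambda+\varepsilon_r,\bar\alpha_u+H^{V_i}(v_i))$-open; absorbing the shift into a modified function $\bar\alpha^{(i)}\in\mathscr A$ (shifting $\bar\alpha$ by $H^{V_i}(v_i)$ on end-boxes inside $V_i$) yields
\[
Y_{P^{(i)},r-1,\lambda,\bar\alpha}\le \xi^{V_i}_{r-1,\lambda+\varepsilon_r,\bar\alpha^{(i)},j-1,B_{P^{(i)}}}\qquad\text{on }\mathcal E_r.
\]
Lemma~\ref{lem:fluct-H} applied at scale ratio $\ell/L\le 1/K$, combined with the choice $\varepsilon_r=4\sqrt{C_2}\beta^{r/2}$, makes $\mathbb P(\mathcal E_r^c)$ negligible compared with the target bound; this is the job of the lemmas \ref{lem:E-1} and \ref{lem:E-r} hinted at in the outline. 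Summability of $\{\varepsilon_r\}$ keeps $K_r(\lambda)=K_0(\lambda+\sum_{s\le r}\varepsilon_s)$ finite uniformly in $r$, so $K\ge K_r(\lambda)$ suffices to invoke the inductive hypothesis at the shifted parameter $\lambda+\varepsilon_r$.

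The final step is a Chernoff bound on the average $\frac{1}{d_P}\sum_i\xi^{V_i}_{r-1,\lambda+\varepsilon_r,\bar\alpha^{(i)},j-1,B_{P^{(i)}}}$. The boxes $V_i$ may overlap, but Proposition~\ref{prop:tree}(b) bounds the overlap by a universal constant, so I would partition the children into $O(1)$ colour classes inside each of which the $V_i$ are pairwise disjoint; conditionally on the family of harmonic extensions, the fields $\eta^{V_i}$ within a class are then independent. Plugging the inductive tail $2e^{-c_{r-2}(t-\delta_{r-1})}$ into a standard moment-generating-function argument on the class averages, and taking a union bound over the $O(1)$ classes, produces an exponent of order $c_{r-1}=\beta K\cdot c_{r-2}$: the factor $K$ comes from averaging over $\ge K$ children, and the factor $\beta$ absorbs the optimisation loss in the MGF. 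The extra $9\beta^{-1}\log K$ term in $\Delta_r$ is designed to pay for the union bound over the polynomially-many (in $K$) configurations $(B_{P^{(i)}})_i$ of children end-boxes compatible with some $P\in\mathcal P_j(B)$, which collapses the maximum over $P$ to a bound on $\xi_{r,\lambda,\bar\alpha,j,B}$.

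The main obstacle, in my view, is the three-way balance forced by the induction. The fluctuation thresholds $\varepsilon_r$ must simultaneously satisfy $\varepsilon_r\gtrsim 1/\sqrt{K}$ at every scale for Lemma~\ref{lem:fluct-H} to bite \emph{and} form a summable sequence so that $K_r(\lambda)$ stays finite uniformly in $r$; the gaps $\Delta_r$ must be large enough to absorb the union-bound cost $\asymp(\log K)/c_{r-1}$ at each level while $\delta_r=\sum_{s<r}\Delta_s$ must stay bounded strictly below $1$; and the rates $c_r=(\beta K)^r$ must grow geometrically in order both to dominate the enumeration of paths and to drive the inductive tail into the regime where the MGF computation produces the next-level constant. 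The design of $(\varepsilon_r,\Delta_r,c_r)$ in \eqref{eq:beta-c_r}--\eqref{eq:Kr} is exactly what closes the recursion; verifying this closure, together with the colour-class decoupling that justifies using conditional independence in the MGF step, is where the technical heart of the proof lies.
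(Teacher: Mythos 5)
Your proposal follows essentially the same strategy as the paper: induction on $r$ via the identity $Y_{P,r,\lambda,\bar\alpha}=\frac{1}{d_P}\sum_i Y_{P^{(i)},r-1,\lambda,\bar\alpha}$, Markov-property conditioning with the good event on harmonic fluctuations (controlled by Lemma~\ref{lem:fluct-H} with the $\varepsilon_r$-thresholds of \eqref{eq:epsilon}), an MGF/Chernoff step aided by partitioning children into $O(1)$ colour classes of disjoint $V_i$'s, and a union bound over child end-box configurations paid for by the $9\beta^{-1}\log K$ term in $\Delta_r$. The paper packages this as an interleaved two-claim induction (the theorem statement and a companion averaging bound implying one another), and your indexing uses $\varepsilon_r$ where the paper uses $\varepsilon_{r+1}$ at the step from level $r$ to $r+1$, but these are expository rather than substantive differences.
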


In other words, by choosing $\delta_r$ as the threshold for total flows through level $r$, the overflows above $\delta_r$ will have an exponential decay uniformly in other parameters.
Furthermore, as in Lemma \ref{lem:E0}, we will use $\{\varepsilon_r : r\ge 1\}$ to bound the fluctuation of harmonic functions at different levels in Lemma \ref{lem:E-1} ($r=1$) and Lemma \ref{lem:E-r} ($r\ge2$), respectively.

\subsection{Proof of Theorem \ref{thm:r=1}}\label{subsec:4.1}
We assume $j\in[2,m-1]\cap\mathbb Z$, $B\in \mathrm{END}_j$, $V_{4K^j}(z_{B})\subseteq V\subseteq V_{2N}$, and $\bar\alpha\in\mathscr A$ in this section.
Define
$
\mathcal{P}_{j, d}(B):=\big\{P \in \mathcal{P}_{j}\left(B\right) : d_{P}=d\big\}.
$
Denote the child-paths of $P$ by $\{P^{(i)}\}_{i\in[d]}$ if $P\in\mathcal{P}_{j, d}(B)$. Note that $d\ge K$ always holds by (a) of Proposition \ref{prop:tree}. Define
\[
\mathrm{END}_{j-1, d} :=\left\{\left\{B_i\right\}_{i \in[d]} \subseteq \mathrm{END}_{j-1} :\left|\left\{i : B_{i} \subseteq \tilde B\right\}\right| \leq 12 \text { for each } \tilde B \in \mathcal{B} \mathcal{D}_{K^{j-1}}\right\},
\]
and for each sequence $\mathcal{S} :=\left\{ B_i \right\}_{i \in[d]}\in\mathrm{END}_{j-1, d}$,
define
\[
\mathcal{P}_{j, \mathcal{S}}(B) :=\big\{P \in \mathcal{P}_{j, d}(B) : P^{(i)} \in \mathcal{P}_{j-1}\left(B_i\right) \text { for all } i \in[d]\big\}.
\]
Furthermore, define
\begin{equation*}
\mathrm{END}_{j-1, d}(B):=\big\{\mathcal S\in \mathrm{END}_{j-1,d}: \mathcal{P}_{j, \mathcal{S}}(B)\neq\emptyset \big\}.
\end{equation*}
For the remainder of this paper, we always assume that $\mathcal{S} :=\left\{ B_i \right\}_{i \in[d]}\in\mathrm{END}_{j-1, d}(B)$ and $d\ge K$. Denote for brevity
\begin{equation}\label{eq:z-V}
z_i:=z_{B_{i}} \ \text{ and } \ V_i:=V_{K^{j-7/8}}(z_i) \ \text { for all } i \in[d].
\end{equation}
Note that $V_i\subseteq V_{4K^j}(z_{B})\subseteq V$ for all $i\in[d]$.
Let $H_i$ be the conditional expectation of $\eta^V$ given $\eta^V\big|_{ V_i^c\cup\partial V_i }$. By Lemma \ref{lem:DMP}, $\eta^{V_i}:=\eta^V-H_i$ is a DGFF on $V_i$ for each $i\in[d]$. Recall $\varepsilon_{1}=8\sqrt{C_2}$ and define
\begin{equation}\label{eq:E_S}
\mathcal{E}_{\mathcal S}=\bigcup_{i\in[d]}\Big\{ \big|H_i(x)-H_i(z_i)\big|\ge\varepsilon_{1} \text{ for some } x\in V_{4K^{j-1}}(z_i)\Big\}.
\end{equation}

For all $P\in\mathcal{P}_{j, \mathcal{S}}(B)$, noting that $P^{(i)}\subseteq V_{4K^{j-1}}(z_i)$ for all $i\in [d]$, and $\eta^{V}(x)=\left(\eta^{V_i}(x)+H_i(z_i)\right)+\big( H_i(x)-H_i(z_i) \big)$ for all $x\in P^{(i)}$; then on the event $\mathcal{E}_{\mathcal S}^c$, by the triangle inequality, $P^{(i)} $ is $(V,\lambda,\alpha_i)$-open implies that it is $\big(V_i,\lambda+\varepsilon_{1},\alpha_i+H_i(z_i)\big)$-open, where $\alpha_i=\bar\alpha(B_i)$ and $H_i(z_i)$ is regarded as a deterministic number with respect to the field $\eta^{V_i}$. Therefore,
\[
Y_{P, 1,\lambda, \bar\alpha}\le \frac1d \sum_{i=1}^{d} Y'_{i,0,\lambda+\varepsilon_{1}}
\] 
for all  $P\in\mathcal{P}_{j, \mathcal{S}}(B)$ on the event $\mathcal{E}_{\mathcal S}^c$,
where
$Y'_{i,0,\lambda+\varepsilon_{1}}$ is the indicator function of the event that 
$P^{(i)}$ is tame and  $\left(V_i,\lambda+\varepsilon_{1},\alpha_i+H_i(z_i)\right)$-open.
This implies that
\begin{equation*}
\zeta_{1,\mathcal{S}}\le
\frac1d \sum_{i=1}^{d}\xi'_{i} \ \text{ on the event } \mathcal{E}_{\mathcal S}^c,
\end{equation*}
where
$
\zeta_{1,\mathcal{S}} :=\max \big\{Y_{P, 1,\lambda, \bar\alpha}: P \in \mathcal{P}_{j, \mathcal{S}}(B)\big\},
$
and
\begin{equation}\label{eq:xi'0}
\xi'_{i}:=1_{ \big\{\text{ there exists a } (V_i,\lambda+\varepsilon_{1},\alpha_i+H_i(z_i))\text{-open path in }  T_{j-1}\left(B_i\right) \big\} }.
\end{equation}
It follows that for all $\delta>0$ and $\mathcal S\in\mathrm{END}_{j-1, d}(B)$,
\begin{equation}\label{eq:1S-delta}
	\mathbb P\big(\{\zeta_{1,\mathcal{S}} >\delta\}\cap\mathcal{E}_{\mathcal S}^c\big)\le \mathbb P\left(\frac1d \sum_{i=1}^{d}\xi'_i>\delta\right).
\end{equation}

Based on an argument similar to \cite[Lemma 4.4]{MR3947326}, we obtain the following lemma.

\begin{lem}\label{lem:average-level0}
	Let $K\ge K_1(\lambda)$. For each $\mathcal{S} :=\left\{ B_i \right\}_{i \in[d]}\in\mathrm{END}_{j-1, d}(B)$, we have
    \[
    \mathbb P\left(\frac1d \sum_{i=1}^{d}\xi'_i>\delta\right)\le e^{ -10^{-4}K^{1/4}\delta d } \quad \text{ for all } \delta\ge\delta_{1}=\frac12.
    \]
\end{lem}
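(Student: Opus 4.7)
The plan is to combine a uniform conditional single-index bound from Corollary~\ref{cor:r=0} with a geometric coloring of $[d]$ that enforces a conditional-independence structure on each color class, and then to close with a Chernoff estimate inside each class and a union bound over the $O(K^{1/4})$ classes.

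First I would derive the conditional per-index estimate. Set $\mathcal{F}_i^\star := \sigma\bigl(\eta^V|_{V_i^c \cup \partial V_i}\bigr)$. By Lemma~\ref{lem:DMP}, $\eta^{V_i} := \eta^V - H_i$ is a DGFF on $V_i$ independent of $\mathcal{F}_i^\star$, whereas $H_i(z_i)$ is $\mathcal{F}_i^\star$-measurable. The nesting $V_{4K^{j-1}}(z_{B_i}) \subseteq V_i \subseteq V_{2N}$ is automatic once $K^{1/8} \ge 8$, hence certainly for $K \ge K_1(\lambda)$. Applying Corollary~\ref{cor:r=0} to the DGFF $\eta^{V_i}$ on $V_i$ conditionally on $\mathcal{F}_i^\star$ (with level $\lambda+\varepsilon_1$ and constant $\alpha_i + H_i(z_i)$) yields
\[
\mathbb{P}\bigl(\xi'_i = 1 \,\big|\, \mathcal{F}_i^\star\bigr) \le e^{-0.01\sqrt{K}} =: p \qquad \text{a.s.}
\]

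Next I would partition $[d]$ into color classes on which the boxes $\{V_i\}$ are pairwise disjoint. Condition (b) of Proposition~\ref{prop:tree} allows at most $12$ starting corners $z_i$ per $\mathcal{BD}_{K^{j-1}}$-box, and $V_i$ has side $K^{j-7/8} = K^{1/8} \cdot K^{j-1}$, so each $V_i$ meets at most $O(K^{1/4})$ other $V_{i'}$; a greedy coloring then uses $C \le c_0 K^{1/4}$ colors for some universal $c_0$. Fix such a class $\mathcal{C}$ and enumerate its elements $i_1 < \cdots < i_{|\mathcal{C}|}$: for $l < k$, the disjointness $V_{i_l} \cap V_{i_k} = \emptyset$ together with the observation that $\xi'_{i_l}$ is a function of $\eta^V|_{V_{i_l}}$ places $\xi'_{i_l}$ inside $\mathcal{F}_{i_k}^\star$. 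The tower property combined with the previous step gives
\[
\mathbb{P}\bigl(\xi'_{i_k} = 1 \,\big|\, \xi'_{i_1}, \ldots, \xi'_{i_{k-1}}\bigr) \le p,
\]
and the standard super-martingale argument applied to $M_k := e^{t\sum_{l \le k}\xi'_{i_l}}(1+p(e^t-1))^{-k}$ produces $\mathbb{E}\bigl[e^{t\sum_{i\in\mathcal{C}}\xi'_i}\bigr] \le \exp\bigl(|\mathcal{C}|\,p(e^t-1)\bigr)$. With $t$ chosen of order $K^{1/2}$ so that $pe^t$ is itself a stretched-exponential in $-K^{1/2}$, Chernoff inside $\mathcal{C}$ then yields $\mathbb{P}\bigl(\sum_{i \in \mathcal{C}} \xi'_i \ge \delta d/C\bigr) \le \exp\bigl(-\Theta(K^{1/4})\,\delta d + o(d)\bigr)$, and since $\bigl\{\sum_i \xi'_i > \delta d\bigr\}$ forces some class to exceed its share $\delta d/C$, a union bound over $C = O(K^{1/4})$ classes introduces only a $\log K$ correction, which is absorbed for $K \ge K_1(\lambda)$ using $d \ge K$ and $\delta \ge 1/2$, delivering the claimed $e^{-10^{-4}K^{1/4}\delta d}$.

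The main obstacle is the genuine dependence of $(\xi'_i)$: each $\xi'_i$ depends globally on $\eta^V$ through its harmonic corrector $H_i$, so independence cannot be obtained directly. The coloring step is the workaround: on a class of pairwise-disjoint $V_i$'s every previously-processed indicator sits inside the exterior sigma-algebra against which Corollary~\ref{cor:r=0} already controls the current one, which lifts the single-index bound to a super-martingale inequality. The size $K^{j-7/8}$ of the Markov buffer $V_i$---a factor $K^{1/8}$ larger than the scale $K^{j-1}$ of the path---is precisely tuned so that the coloring uses $O(K^{1/4})$ classes, and this is exactly the $K^{1/4}$ appearing in the final exponent.
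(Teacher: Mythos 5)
Your proposal is correct and its skeleton matches the paper's: you invoke Corollary~\ref{cor:r=0} for the same conditional per-index bound $\mathbb{P}(\xi'_i=1\mid\mathcal{F}_i^\star)\le e^{-0.01\sqrt{K}}$, and you exploit the same geometric observation that $V_i=V_{K^{j-7/8}}(z_i)$ has side a factor $K^{1/8}$ larger than the $\mathcal{BD}_{K^{j-1}}$ grid, which together with property~(b) of Proposition~\ref{prop:tree} forces the coloring into $O(K^{1/4})$ classes of pairwise-disjoint $V_i$'s. Your justification that $\xi'_{i_l}$ is $\mathcal{F}_{i_k}^\star$-measurable for $l<k$ is also sound: since $H_{i_l}$ restricted to $V_{i_l}$ is determined by $\eta^V|_{\partial V_{i_l}}$ and $\partial V_{i_l}\subseteq V_{i_l}\subseteq V_{i_k}^c$, the whole indicator is a function of $\eta^V|_{V_{i_l}}$ and lands inside $\mathcal{F}_{i_k}^\star$.

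Where you diverge from the paper is in the final combining step. The paper forms a single exponential moment $\mathbb{E}\,e^{a\beta_K\sum_i(\xi'_i-\delta)}=\mathbb{E}\prod_{s,t}W_{s,t}$ and applies Hölder's inequality (which they call Cauchy--Schwarz) with $48K^{1/4}$ factors, yielding $\prod_{s,t}\bigl(\mathbb{E}W_{s,t}^{1/\beta_K}\bigr)^{\beta_K}$ and then $(2g(K)^\delta)^{\beta_K d}$ directly; the $\beta_K=(48K^{1/4})^{-1}$ is exactly the reciprocal of the number of color classes. You instead close each class separately via a super-martingale domination (with respect to the filtration $\mathcal{G}_k=\sigma(\xi'_{i_1},\ldots,\xi'_{i_k})$, which is genuinely nested even though $\mathcal{F}_{i_k}^\star$ is not) and then combine via pigeonhole and a union bound over the $O(K^{1/4})$ classes. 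Both close with the same $e^{-cK^{1/4}\delta d}$ rate; the Hölder route avoids the pigeonhole/union-bound bookkeeping and the extra $\log K$ correction you mention, while your super-martingale route is slightly more elementary in that it never needs to trade exponents between factors. One small caution if you carry this out in detail: your Chernoff parameter $t$ should satisfy $pe^t\ll 1$ (e.g.\ $t\sim 0.005\sqrt{K}$, so $pe^t=e^{-0.005\sqrt{K}}$), not quite ``of order $K^{1/2}$'' as literally written, which would make $pe^t$ blow up; with that adjustment the claimed bound follows.
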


\begin{proof}
Let $\beta_K=(48K^{1/4})^{-1}$.
We will classify $B_i$'s into $\beta_K^{-1}$ groups in the following procedure, such that $V_i$'s in each group are disjoint. Note that if $d_{\infty}(B_{i},B_{i'})\ge 1.5K^{j-7/8}$, then $d_{\infty}(V_i,V_{i'})\ge K^{j-1}$. First, we classify $\mathcal{BD}_{K^{j-1}}$ into $4K^{1/4}=\left( 2K^{j-7/8}/K^{j-1} \right)^2$ families $\tilde{\mathcal G}_s, s\in [4K^{1/4}]$, where $\tilde{\mathcal G}_1$ consists of boxes respectively containing $(2aK^{j-7/8},2bK^{j-7/8})$, $a,b\in\mathbb Z$ and other $\tilde{\mathcal G}_s$'s are its shifts. Let 
\[
\mathcal{G}_{s}:=\big\{B_{i}: i \in[d],\text{ and } B_{i} \subseteq \tilde B \text{ for some } \tilde B \in \tilde{\mathcal G}_s\big\}.
\] 
Then, by (b) of Proposition \ref{prop:tree}, we can classify each $\mathcal{G}_{s}$ into $12$ groups $\mathcal{G}_{s,t}, t\in [12]$, such that for each $s,t$, a box in $\tilde{\mathcal G_s}$ contains at most one $B_{i}$ in $\mathcal G_{s,t}$. Thus, $V_i$'s in each group $\mathcal G_{s,t}$ are disjoint.

 Let $V_{s,t}=\cup_i V_i$ be the union of $V_i$'s with $i$ such that $B_{i}\in \mathcal{G}_{s,t}$. Define the $\sigma$-field generated by the information outside $V_{s,t}$ by
\[
\mathcal F_{s,t}:=\big\{ \eta^{V} (x): x\in (V   \backslash V_{s,t})\cup\partial V_{s,t} \big\}.
\]
Then, conditioned on $\mathcal F_{s,t}$, $\xi'_i$'s in each group $\mathcal{G}_{s,t}$ are mutually independent. Denote \[W_{s,t}:=\prod_{B_{i}\in \mathcal{G}_{s,t}}e^{a\beta_K(\xi'_i-\delta)}, 
\]
where $\delta\ge\delta_1=\frac12$ and $a$ is a positive number to be set.
Then, we have
\begin{equation}\label{eq:W-st}
\mathbb{E}W_{s,t}^{1/\beta_K}
=\mathbb{E}\prod_{B_{i}\in \mathcal{G}_{s,t}}e^{a(\xi'_i-\delta)}
=\mathbb{E}\prod_{B_{i}\in \mathcal{G}_{s,t}}\mathbb{E}\left(e^{a(\xi'_i-\delta)} \big| \mathcal F_{s,t}\right).
\end{equation}

Next, we will estimate $\mathbb{E}\left(e^{a(\xi'_i-\delta)} | \mathcal F_{s,t}\right)$. Since $K\ge K_1(\lambda)= K_0(\lambda+\varepsilon_{1})$, by Corollary \ref{cor:r=0}, $\xi'_i$ is a Bernoulli random variable with
$
\mathbb{P}(\xi'_i=1| \mathcal F_{s,t})\le e^{-0.01\sqrt{K}}=:g(K).
$
Consequently,
\[
\mathbb{E}\left(e^{a(\xi'_i-\delta)} \big| \mathcal F_{s,t}\right)\le e^{a (1-\delta)}g(K)+e^{-a\delta}.
\]
Set $a=\log\left(\frac{\delta}{1-\delta}g(K)^{-1} \right)$ to optimize the above bound, noting
$a\ge\log\left( \frac{\delta_1}{1-\delta_1}g(K)^{-1} \right)=0.01\sqrt{K}>0$.
It follows that
\begin{equation}\label{eq:mathod-1}
\mathbb{E}\left(e^{a(\xi'_i-\delta)} \big| \mathcal F_{s,t}\right)\le  f(\delta)g(K)^{\delta}\le 2g(K)^{\delta},
\end{equation}
where $f(\delta):=\left( \frac{\delta}{1-\delta} \right)^{1-\delta}+\left( \frac{\delta}{1-\delta} \right)^{-\delta}\le 2$. Combined with \eqref{eq:W-st}, this yields
\begin{equation}\label{eq:W-st-2}
\mathbb{E}W_{s,t}^{1/\beta_K}\le \prod_{B_{i}\in \mathcal{G}_{s,t}} \left(2g(K)^{\delta} \right).
\end{equation}
By the Cauchy-Schwarz inequality,
\begin{equation}\label{eq:C-S}
\mathbb E e^{a\beta_K\sum_i(\xi'_i-\delta)}
=\mathbb{E}\prod_{s=1}^{4K^{1/4}}\prod_{t=1}^{12}W_{s,t}
\le \prod_{s=1}^{4K^{1/4}}\prod_{t=1}^{12}\left(\mathbb{E}W_{s,t}^{1/\beta_K}\right)^{\beta_K}.
\end{equation}
Combining \eqref{eq:W-st-2} and \eqref{eq:C-S}, we obtain
\begin{align}
\mathbb P\left(\frac1d \sum_{i=1}^{d}\xi'_i>\delta\right)
&\le \mathbb E e^{a\beta_K\sum_i(\xi'_i-\delta)}
\le\prod_{s=1}^{4K^{1/4}}\prod_{t=1}^{12}\prod_{B_i\in \mathcal{G}_{s,t}}\left(2g(K)^{\delta}\right)^{\beta_K}\nonumber\\
&\le\left(2g(K)^{\delta}\right)^{\beta_Kd}
\le e^{ -10^{-4}K^{1/4}\delta d },\label{eq:method-2}
\end{align}
where the last inequality follows from $d\ge K\ge 2^{32}$ and $\delta\ge \frac12$.
\end{proof}

Recall \eqref{eq:E_S} for the definition of $\mathcal{E}_{\mathcal S}$ and define the event
\begin{equation}\label{eq:E1}
\mathcal{E}_1:= \bigcup_{d\ge K}\bigcup_{\mathcal S\in\mathrm{END}_{j-1, d}(B)}\mathcal{E}_{\mathcal S}.
\end{equation}
To prove Theorem \ref{thm:r=1}, we need to estimate $\mathbb P\big(\mathcal{E}_1 \big)$ in addition. The argument is quite similar to Lemma \ref{lem:E0}.

\begin{lem}\label{lem:E-1}
	Let $K\ge C_5$. Then
	$
	\mathbb P\big(\mathcal{E}_1 \big)
	\le e^{-1.5K^{1/8} }.
	$
\end{lem}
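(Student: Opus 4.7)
The plan is to reduce $\mathcal{E}_1$ to a union of single-box fluctuation events, and then apply Lemma~\ref{lem:fluct-H} in essentially the same way as in the proof of Lemma~\ref{lem:E0}. The key observation is that the event
\[
\mathcal{E}_{B'} := \bigl\{ |H_{B'}(x) - H_{B'}(z_{B'})| \ge \varepsilon_1 \text{ for some } x \in V_{4K^{j-1}}(z_{B'}) \bigr\}
\]
depends only on the single box $B' \in \mathrm{END}_{j-1}$, not on any surrounding sequence $\mathcal{S}$. Writing $\mathcal{B}(B)$ for the set of $B' \in \mathrm{END}_{j-1}$ that appear as some $B_i$ in some $\mathcal{S} \in \mathrm{END}_{j-1, d}(B)$ for some $d \ge K$, the triple union in the definition of $\mathcal{E}_1$ thus collapses to $\mathcal{E}_1 \subseteq \bigcup_{B' \in \mathcal{B}(B)} \mathcal{E}_{B'}$.

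Next I apply Lemma~\ref{lem:fluct-H} to each $B' \in \mathcal{B}(B)$, with the outer box of side $L = K^{j-7/8}$ (the side of $V_{B'}$), inner region of side $\ell = 4K^{j-1}$, and threshold $\varepsilon = \varepsilon_1 = 8\sqrt{C_2}$. The hypothesis $\varepsilon_1 \ge C_3 \sqrt{\ell / L} = 2 C_3 K^{-1/16}$ simplifies, via $C_3 = 2 C_4 \sqrt{C_2}$, to $K^{1/16} \ge C_4 / 2$, which is ensured by $K \ge C_5 = (2 \vee C_4)^{32}$. One also checks that $V_{4K^{j-1}}(z_{B'}) \subseteq V_{B'}^{\chi}$ for such $K$. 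The lemma then yields
\[
\mathbb{P}(\mathcal{E}_{B'}) \le 4\exp\left\{ -\frac{\varepsilon_1^2 L}{8 C_2 \ell} \right\} = 4\exp\{-2 K^{1/8}\}.
\]

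It remains to bound $|\mathcal{B}(B)|$ combinatorially. Since every $P \in \mathcal{P}_j(B)$ satisfies $P \subseteq B(x_P, (1+1/K)K^j)$ with $x_P \in B$, the starting point $z_{B'}$ of each admissible child-path lies within $\ell_\infty$-distance $O(K^j)$ of $z_B$. Since $B'$ is a box in $\mathcal{BD}_{K^{j-3}}$, this gives $|\mathcal{B}(B)| \le C_6 K^6$ for a universal constant $C_6$. A union bound then yields
\[
\mathbb{P}(\mathcal{E}_1) \le 4 C_6 K^6 \exp\{-2 K^{1/8}\} \le \exp\{-1.5 K^{1/8}\},
\]
where the last inequality uses $6 \log K + \log(4 C_6) \le 0.5 K^{1/8}$, which holds whenever $K \ge C_5$ (possibly after enlarging the universal constant $C_5$). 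The argument is essentially a replay of Lemma~\ref{lem:E0}; the only step requiring some thought is decoupling the fluctuation event from the combinatorial sequence structure, after which the calculation is routine.
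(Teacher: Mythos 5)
Your proposal is correct and follows essentially the same approach as the paper: reduce $\mathcal{E}_1$ to a union of single-box fluctuation events (the paper phrases this as "$\mathcal{E}_1$ implies that the fluctuation of $H_t$ in $V_{4K^{j-1}}(z_t)$ is greater than $\varepsilon_1$ for some $t$"), apply Lemma~\ref{lem:fluct-H} with $\ell = 4K^{j-1}$ and $L = K^{j-7/8}$ to get the per-box bound $4e^{-2K^{1/8}}$, and conclude by a union bound over the $O(K^6)$ (the paper uses the looser $K^7$) relevant boxes. The explicit decoupling of the triple union into single-box events $\mathcal{E}_{B'}$ is a cleaner way of saying what the paper does implicitly, and your caveat about possibly enlarging $C_5$ is fair, since the final numerical union-bound step in both treatments relies on $C_5$ being large enough.
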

\begin{proof}
	There are at most $K^7$ boxes in $\mathcal{BD}_{K^{j-3}}$ intersecting with some paths in $\mathcal{P}_j(B)$. Denote them by $B_t$'s. For each $B_t$, denote for brevity
	\[
	z_t=z_{B_t}\ \text{ and } \  V_t=V_{K^{j-7/8}}(z_t).
	\]
	Let $H_t$ be the conditional expectation of $\eta^V$ given $\eta^V\big|_{ V_t^c\cup\partial V_t }$. Setting $\ell=4K^{j-1}, L=K^{j-7/8}$ in Lemma \ref{lem:fluct-H}, recalling $C_5=(2\vee C_4)^{32}$, for $K\ge C_5$, we have
	$
	C_3\sqrt{\frac{\ell}{L}}=C_3\sqrt{\frac{4}{K^{1/8}}}\le\varepsilon_1,
	$
	and for all $t$,
	\[
	\mathbb P\Big( \big|H_t(x)-H_t(z_t)\big|\ge\varepsilon_{1} \text{ for some } x\in V_{4K^{j-1}}(z_t) \Big)\le 4e^{ -2K^{1/8}}.
	\]
	Note that $\mathcal{E}_1 $ implies that the fluctuation of $H_t$ in $V_{4K^{j-1}}(z_t)$ is greater than $\varepsilon_{1}$ for some $t$. Thus, we obtain
	$
	\mathbb P\left(\mathcal{E}_1 \right)
	\le  4K^7e^{ -2K^{1/8}}
	\le e^{ -1.5K^{1/8}},
	$
	completing the proof.
\end{proof}

\begin{proof}[Proof of Theorem \ref{thm:r=1}]
It can be seen from the definition that
	\begin{equation}\label{eq:xi1-delta}
	\mathbb P\big(\xi_{1}>\delta\big)
	\le \sum_{d=K}^\infty\sum_{\mathcal S\in\mathrm{END}_{j-1,d}(B)} \mathbb P\big(\{\zeta_{1,\mathcal S}>\delta \}\cap\mathcal{E}_{\mathcal S}^c\big)+\mathbb P\big(\mathcal{E}_1\big),
	\end{equation}
	Note that there are at most $K^7$ boxes in $\mathcal{BD}_{K^{j-3}}$ intersecting with some path in $\mathcal{P}_j(B)$. Therefore, there are at most $K^{7d}$ sequences in $\mathrm{END}_{j-1,d}(B)$.
	Combined with \eqref{eq:1S-delta}, Lemma \ref{lem:average-level0} and Lemma \ref{lem:E-1}, and using a union bound, this yields
	\[
	\mathbb P(\xi_{1}>\delta)
	\le\sum_{d=K}^{\infty}K^{7d}e^{ -10^{-4}K^{1/4}\delta d }+e^{ -1.5K^{1/8}}
	\le e^{ -K^{1/8}},
	\]
	where in the last inequality we have used $\sum_{d=K}^{\infty}K^{7d}e^{ -10^{-4}K^{1/4}\delta d }
	\le e^{-K}$ for $\delta\ge\frac12$ and $K\ge 2^{32}$.
	This completes the proof of the theorem.
\end{proof}

\subsection{Proof of Theorem \ref{thm:xi-bound}}\label{subsec:4.2}
Assume $r\in[2,m-2]\cap\mathbb Z$, $j\in[r+1,m-1]\cap\mathbb Z$, $B\in \mathrm{END}_j,V_{4K^j}(z_{B})\subseteq V\subseteq V_{2N}$, and $\bar\alpha\in\mathscr A$ in this section.
The reasoning of the proof of Theorem \ref{thm:xi-bound} is similar to that of Theorem \ref{thm:r=1}.
Recall $\mathcal{S} :=\left\{B_{i}\right\}_{i \in[d]}\in\mathrm{END}_{j-1, d}(B)$. Compared with \eqref{eq:z-V}, here we set
\[
z_i:=z_{B_{i}} \ \text{ and } \ V_i:=V_{4K^{j-1}}(z_i) \ \text { for all } i \in[d].
\]
 Noting that $V_i\subseteq V$ for all $i$, let $H_i$ be the conditional expectation of $\eta^V$ given $\eta^V\big|_{ V_i^c\cup\partial V_i }$. By Lemma \ref{lem:DMP}, $\eta^{V_i}:=\eta^V-H_i$ is a GFF on $V_i$ for all $i$. Recall $\beta=2^{-9}$ set in \eqref{eq:beta-c_r} and  $\varepsilon_{r+1}=4\sqrt{C_3}\beta^{r/2}$ set in \eqref{eq:epsilon}. Analogous to \eqref{eq:E_S} and \eqref{eq:E1}, we define the events
 \begin{equation}\label{eq:Ei}
  \mathcal{E}_{r+1,i}:=\left\{\begin{array}{c}
 \text { there exists a box } B'\in \mathrm{END}_{j-r-1}\text{ such that } B'\subseteq V_{3K^{j-1}}(z_i) \\
  \text{ and }\big|H_i(x)-H_i(z_{B'})\big|\ge\varepsilon_{r+1} \text{ for some } x\in V_{4K^{j-r-1}}(z_{B'})
 \end{array}\right\},
 \end{equation}
 \[
  \mathcal{E}_{r+1,\mathcal S}:=\bigcup_{i\in [d]}\mathcal{E}_{r+1,i}, \ \text{ and } \
 \mathcal{E}_{r+1}:= \bigcup_{d\ge K}\bigcup_{\mathcal S\in\mathrm{END}_{j-1, d}(B)}\mathcal{E}_{r+1,\mathcal S} \ \text{ for all } r\ge 1.
 \]

 For $P\in \mathcal{P}_{j, \mathcal{S}}(B), i\in [d], u\in \mathcal{T}_{P^{(i)},r}$, we have $B_u\in \mathrm{END}_{j-r-1}, B_u\subseteq V_{3K^{j-1}}(z_i)$ and $u\subseteq V_{4K^{j-r-1}}(z_{u})$ with $z_u:=z_{B_u}$. Noting that $\eta^{V}(x)=\left(\eta^{V_i}(x)+H_i(z_u)\right)+\left( H_i(x)-H_i(z_u) \right)$ for all $x\in u$, on the event $\mathcal{E}_{r+1,i}^c$, by the triangle inequality, $u$ is $(V,\lambda,\bar\alpha_u)$-open implies that it is $(V_i, \lambda+\varepsilon_{r+1}, \bar\alpha_u+H_i(z_u))$-open.
 Thus, by an analogous reasoning of \eqref{eq:1S-delta}, for all $\delta>0$ and $\mathcal S\in\mathrm{END}_{j-1, d}(B)$,
 \begin{equation}\label{eq:r+1S-delta}
 \mathbb P\left(\{\zeta_{r+1,\mathcal{S}} >\delta\}\cap\mathcal{E}_{r+1,\mathcal S}^c\right)\le \mathbb P\left(\frac1d \sum_{i=1}^{d}\xi'_{i,r,\lambda+\varepsilon_{r+1},j-1}>\delta\right),
 \end{equation}
 where
 $
 \zeta_{r+1,\mathcal{S}} :=\max \big\{Y_{P, r+1,\lambda, \bar\alpha}: P \in \mathcal{P}_{j, \mathcal{S}}(B)\big\},
 $ and
 \begin{equation}\label{eq:xi'_i}
 \xi'_{i,r,\lambda+\varepsilon_{r+1},j-1}:=\max_{P' \in \mathcal{P}_{j-1}(B_i)}\sum_{u\in \mathcal T_{P',r}}\theta_{P'}(u)1_{\big\{ u \text{ is tame and } (V_i,\lambda+\varepsilon_{r+1},\bar\alpha_u+H_i(z_{u}))\text{-open}\big\}}.
 \end{equation}

In addition, the following lemma is analogous to Lemma \ref{lem:E-1}.
\begin{lem}\label{lem:E-r}
	Let $K\ge C_5$. Then
	$
	\mathbb P\left(\mathcal{E}_{r+1} \right)
	\le e^{-c_r},
	$
	where $c_r=(\beta K)^r$ is set in \eqref{eq:beta-c_r}.
\end{lem}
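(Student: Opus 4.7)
The plan is to adapt the proof of Lemma~\ref{lem:E-1} essentially verbatim, with the only structural change being that we now estimate fluctuations of the harmonic function on boxes of side length $4K^{j-r-1}$ embedded in regions of scale $4K^{j-1}$, so that the ratio $\ell/L=K^{-r}$ shrinks geometrically in $r$ and the calibration of $\varepsilon_{r+1}$ picks up the extra gain.

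First, since $\mathcal E_{r+1,i}$ depends on the sequence $\mathcal S$ only through the individual box $B_i$, the outer unions over $d\ge K$ and $\mathcal S\in\mathrm{END}_{j-1,d}(B)$ collapse to a single union over those $B_i\in\mathrm{END}_{j-1}$ that can arise as the starting box of some sub-path of a path in $\mathcal P_j(B)$. Just as in Lemma~\ref{lem:E-1}, the constraint that the parent path lies in $B(x_P,(1+1/K)K^j)$ gives at most $K^7$ such boxes. For each fixed $B_i$, the event $\mathcal E_{r+1,i}$ is itself a union over the boxes $B'\in\mathrm{END}_{j-r-1}$ with $B'\subseteq V_{3K^{j-1}}(z_i)$, and a straightforward count bounds the number of such $B'$ by $(3K^{j-1}/K^{j-r-3})^2\le 9K^{2r+4}$.

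Next, one fixes a pair $(B_i,B')$ and applies Lemma~\ref{lem:fluct-H} to $H_i$ with $L=4K^{j-1}$, $\ell=4K^{j-r-1}$, $U=V_\ell(z_{B'})$ and $\varepsilon=\varepsilon_{r+1}$. A brief geometric check, using that $V_{3K^{j-1}}(z_i)$ sits at $\ell_\infty$-distance at least $K^{j-1}/2$ from $\partial V_i$, confirms $U\subseteq V_i^{\chi}$ once $K\ge C_5$. The hypothesis $\varepsilon_{r+1}\ge C_3\sqrt{\ell/L}$ reduces, via $\varepsilon_{r+1}=4\sqrt{C_2}\beta^{r/2}$ and $C_3=2C_4\sqrt{C_2}$, to $(K\beta)^{r/2}\gtrsim C_4$, which is ensured by $K\ge C_5=(2\vee C_4)^{32}$. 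The lemma then delivers
\[
\mathbb P\Big(|H_i(x)-H_i(z_{B'})|\ge\varepsilon_{r+1}\text{ for some }x\in U\Big)\le 4\exp\Big\{-\frac{\varepsilon_{r+1}^2 L}{8C_2\ell}\Big\}=4e^{-2c_r},
\]
by the deliberate calibration $\varepsilon_{r+1}^2/C_2=16\beta^r$ together with $L/\ell=K^r$, so that $\varepsilon_{r+1}^2 L/(8C_2\ell)=2(\beta K)^r=2c_r$.

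A union bound over the at most $K^7\cdot 9K^{2r+4}\le K^{2r+12}$ pairs $(B_i,B')$ then yields $\mathbb P(\mathcal E_{r+1})\le 4K^{2r+12}e^{-2c_r}\le e^{-c_r}$; the last inequality is easy because $c_r=(\beta K)^r$ is exponential in $r$ whereas the logarithm of the combinatorial prefactor is only $O(r\log K)$, so for $K\ge C_5\ge 2^{32}$ the exponential decay dominates comfortably. There is no serious obstacle here: the only non-routine point is the calibration of $\varepsilon_{r+1}$, which must simultaneously be large enough that Lemma~\ref{lem:fluct-H} produces a bound of order $e^{-2c_r}$ and small enough that $\sum_r\varepsilon_r$ is finite so that the shift $\lambda\mapsto\lambda+\sum_i\varepsilon_i$ stays bounded through the induction driving Theorem~\ref{thm:xi-bound}. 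This balance is already built into the definitions \eqref{eq:beta-c_r}--\eqref{eq:Kr}, so the proof reduces to bookkeeping.
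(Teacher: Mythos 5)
Your proposal follows the paper's proof almost line by line: the same collapse of the unions to the $\le K^7$ relevant boxes in $\mathrm{END}_{j-1}$, the same count $\le 9K^{2r+4}$ of sub-boxes $B'\in\mathrm{END}_{j-r-1}$, the same application of Lemma~\ref{lem:fluct-H} with $\ell=4K^{j-r-1}$, $L=4K^{j-1}$, and the same calibration $\varepsilon_{r+1}^2L/(8C_2\ell)=2(\beta K)^r=2c_r$, followed by a union bound. If anything, your write-up is slightly more careful than the paper's: you correctly compare $H_i(x)$ with $H_i(z_{B'})$ as required by the definition of $\mathcal E_{r+1,i}$ (the paper's displayed estimate erroneously writes $z_t$ in place of $z_{B'}$), and you consistently use $C_2$ in the exponent, whereas the paper twice misprints $C_3$ for $C_2$ in this passage; the numbers nonetheless agree.
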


\begin{proof}
	There are at most $K^7$ boxes in $\mathrm{END}_{j-1}$ intersecting with some paths in $\mathcal{P}_j(B)$. Denote them by $B_t$'s. For each $B_t$, denote $z_t:=z_{B_t}, V_t:=V_{4K^{j-1}}(z_t)$, and by $H_t$ the conditional expectation of $\eta^V$ given $\eta^V\big|_{ V_t^c\cup\partial V_t }$. Let $\mathcal{E}_{r+1,t}$ be the event as in \eqref{eq:Ei} with $t$ in place of $i$. 

	Note that $\mathcal{E}_{r+1}$ implies $\mathcal{E}_{r+1,t}$ for some $t$. It suffices to estimate the probability of $\mathcal{E}_{r+1,t}$ for all $t$.
	Setting $\ell=4K^{j-r-1}$ and $L=4K^{j-1}$ in Lemma \ref{lem:fluct-H}, for $K\ge C_5$, we have
	$
	C_3\sqrt{\frac{\ell}{L}}=C_3 K^{-\frac r2}\le \varepsilon_{r+1},
	$
	and for any box $B'$ in $\mathrm{END}_{j-r-1}$ and $B'\subseteq V_{3K^{j-1}}(z_t)$, we have $V_{4K^{j-r-1}}(z_{B'})\subseteq V_t^{\chi}$, therefore
	\[
	\mathbb P\Big( \big|H_t(x)-H_t(z_t)\big|\ge\varepsilon_{r+1} \text{ for some }  x\in V_{4K^{j-r-1}}(z_{B'}) \Big)\le
	4\exp\left\{-\frac{\varepsilon_{r+1}^2}{8C_3}K^r \right\}\le 4e^{-2c_r}.
	\]
	 Note that there are at most $\left( \frac{3K^{j-1}}{K^{j-r-3}} \right)^2\le 9K^{2r+4}$ boxes $B'$'s in $\mathrm{END}_{j-r-1}$ such that $B'\subseteq V_{3K^{j-1}}(z_t)$. By a union bound,
	$
	\mathbb P\left(\mathcal{E}_{r+1} \right)
	\le K^{7}\cdot 9K^{2r+4} \cdot 4e^{-2c_r}
	\le e^{-c_r},
	$
	completing the proof.
\end{proof}

\begin{proof}[Proof of Theorem \ref{thm:xi-bound}]
		We will apply induction on $r$, similar to the proof of \cite[Lemma 4.4]{MR3947326}. To this end, we will prove that the following hold for all $r\in[2,m-2]\cap\mathbb Z$ and $j\in [r+1,m-1]\cap\mathbb Z$.
	
		(i) Suppose $K\ge K_r(\lambda)$, $B\in \mathrm{END}_j$, $V_{4K^j}(z_{B})\subseteq V\subseteq V_{2N}$, and $\bar\alpha\in\mathscr A$. Then, 
		\[
		\mathbb{P}(\xi_{r,\lambda,\bar\alpha,j,B}>\delta)
		\le 2 e^{-c_{r-1}(\delta-\delta_r)}
		\quad \text{for all } \delta\ge\delta_r.
		\]
		
		(ii) Suppose $K\ge K_{r+1}(\lambda)$, $B\in \mathrm{END}_{j+1}$, $d\ge K$, $\left\{B_{i}\right\}_{i \in[d]} \in \mathrm{END}_{j, d}(B)$, and denote $\xi'_i:=\xi'_{i,r,\lambda+\varepsilon_{r+1},j}, i\in [d]$, defined in \eqref{eq:xi'_i}. Then,
		\[
		P\left(\frac1d \sum_{i=1}^{d}\xi'_{i}>\delta\right)
		\le \Big(K^{-9} e^{-\beta c_{r-1}(\delta-\delta_{r+1})}\Big)^d \text{ for all } \delta\ge\delta_{r+1}.
		\]
		
	In Step 1, we will show that (i) implies (ii). In Step 2, we will show (i) for $r+1$ and all $j \in[r+2, m-1] \cap \mathbb{Z}$, provided that (ii) holds for all \(j \in[r+1, m-1] \cap \mathbb{Z}\). In Step 3, we will show (i) holds for $r=2$ and $j\in [3,m-1]\cap\mathbb Z$.
	
	\textbf{Step 1.} Suppose (i) holds. We will prove (ii). 
	We can classify $\{B_{i}\}_{i\in [d]}$ into $432(\le 2^9=\beta^{-1})$ groups $\mathcal G_t$'s such that $V_i$'s in each group are disjoint, where $V_i=V_{4K^{j-1}}(z_i)$. Let $V_t$ be the union of $V_i$'s with $i$ such that $B_i\in\mathcal G_t$. Define
	$
	\mathcal F_{t}:=\big\{ \eta^{V} (x): x\in (V   \backslash V_{t})\cup\partial V_{t} \big\}.
	$
	Conditioned on $\mathcal F_{t}$, $\xi'_i$'s in each group $\mathcal G_t$ are mutually independent.
	Next, we will estimate $\mathbb{E}\left(e^{a(\xi'_i-\delta)} \big| \mathcal F_{t}\right)$, where $\delta\ge\delta_{r+1}$ and $a>0$. For each $i\in [d]$, we apply (i) to $\lambda+\varepsilon_{r+1}$, and have for $K\ge K_{r+1}(\lambda):=K
	_r(\lambda+\varepsilon_{r+1})$,
	\begin{equation}\label{eq:xi'i}
	\mathbb P\left(\xi'_i>\delta\big| \mathcal F_{t}\right)\le 2e^{-c_{r-1}(\delta-\delta_r)} \quad \text{ for all } \delta\ge\delta_r.
	\end{equation}
	Note that $0\le\xi'_i\le1$. It follows that for each $a>0$,
\begin{small}
	\begin{equation*}
	\mathbb E \left(e^{a\xi'_i}\Big| \mathcal F_{t}\right)
	\le  e^{a\delta_r}+\int_{\delta_r}^{1} \mathbb P\left(\xi'_i>z\big| \mathcal F_{t}\right)ae^{az} dz\\
	=e^{a\delta_r}\left( 1+2a\int_{0}^{1-\delta_r}e^{(a-c_{r-1})z}dz \right).
	\end{equation*}
\end{small}
Take $a=c_{r-1}$, then $\mathbb E \left(e^{a\xi'_i}\big| \mathcal F_{t}\right)\le (1+2c_{r-1})e^{c_{r-1}\delta_r}$. Hence for all $\delta\ge\delta_r$,
	\[
	\mathbb E \left(e^{a\left(\xi'_i-\delta\right)}\Big| \mathcal F_{t}\right)\le (1+2c_{r-1})e^{-c_{r-1}(\delta-\delta_r)}.
	\]
	Using the same argument from \eqref{eq:mathod-1} to \eqref{eq:method-2} as in Lemma \ref{lem:average-level0}, we obtain
	\begin{equation}\label{eq:4.3-1}
	P\left(\frac1d \sum_{i=1}^{d}\xi'_i>\delta\right)
	\le \Big( (1+2c_{r-1})e^{-c_{r-1}(\delta-\delta_r)} \Big)^{\beta d}.
	\end{equation}
	Recall $\Delta_r=\delta_{r+1}-\delta_r$ from \eqref{eq:Delta}. We get
	$
	\Big( (1+2c_{r-1})e^{-c_{r-1}(\delta_{r+1}-\delta_r)} \Big)^{\beta}\le K^{-9}.
	$
    Combined with \eqref{eq:4.3-1}, this implies (ii).
	
	\textbf{Step 2.} Assuming that (ii) holds for all \(j \in[r+1, m-1] \cap \mathbb{Z}\), we will show (i) for $r+1$ and all $j \in[r+2, m-1] \cap \mathbb{Z}$.  Similar to \eqref{eq:xi1-delta}, we have
	\begin{equation}\label{eq:xi-r+1-delta}
	\mathbb P\big(\xi_{r+1}>\delta\big)
	\le \sum_{d=K}^\infty\sum_{\mathcal S\in\mathrm{END}_{j-1,d}(B)} \mathbb P\big(\{\zeta_{r+1,\mathcal S}>\delta \}\cap\mathcal{E}_{r+1,\mathcal S}^c\big)+\mathbb P\big(\mathcal{E}_{r+1}\big).
	\end{equation}
	Note that $r+2\le j\le m-1$ implies $r+1\le j-1\le m-1$, then for $K\ge K_{r+1}(\lambda)$, we apply (ii) to $j-1$, and have
	\[
	\mathbb P\left(\frac1d \sum_{i=1}^{d}\xi'_{i,r,\lambda+\varepsilon_{r+1},j-1}>\delta\right)\le
	\Big(K^{-9} e^{-\beta c_{r-1}(\delta-\delta_{r+1})}\Big)^d \ \text{ for all } \delta\ge\delta_{r+1}.
	\]
	Combined with \eqref{eq:r+1S-delta}, this gives that for each $\mathcal S\in\mathrm{END}_{j-1,d}(B)$,
	\begin{equation}
	\mathbb P\big(\{\zeta_{r+1,\mathcal S}>\delta \}\cap\mathcal{E}_{r+1,\mathcal S}^c\big)
	\le \Big(K^{-9} e^{-\beta c_{r-1}(\delta-\delta_{r+1})}\Big)^d \ \text{ for all } \delta\ge\delta_{r+1}.
	\end{equation}
	Note that there are at most $K^{7d}$ sequences in $\mathrm{END}_{j-1,d}(B)$. By a union bound, the first term on the right hand side of \eqref{eq:xi-r+1-delta} is less than
	\begin{equation}\label{eq:3/2}
     \sum_{d=K}^{\infty}\Big(K^{-2}e^{-\beta c_{r-1}(\delta-\delta_{r+1})}\Big)^d
     \le \frac32e^{-c_{r}(\delta-\delta_{r+1})},
    \end{equation}
     since $K^{-2}e^{-\beta c_{r-1}(\delta-\delta_{r+1})}\le K^{-2}\le\frac13$ for $\delta\ge \delta_{r+1}$.
	Moreover, note that $\delta-\delta_{r+1}\le\frac12$ since $\delta\le1$ and $\delta_{r+1}\ge\frac12$, then by Lemma \ref{lem:E-r},
	\begin{equation}\label{eq:1/2}
	P\big(\mathcal{E}_{r+1}\big)\le e^{-c_r}\le \frac12e^{-c_{r}/2}\le \frac12e^{-c_{r}(\delta-\delta_{r+1})}.
	\end{equation}
	Plugging \eqref{eq:3/2} and \eqref{eq:1/2} into \eqref{eq:xi-r+1-delta}, we obtain
	$
	\mathbb P(\xi_{r+1}>\delta) \le 2e^{-c_{r}(\delta-\delta_{r+1})}.
	$
	That is, (i) holds for $r+1$ and all $j \in[r+2, m-1] \cap \mathbb{Z}$.
	
	\textbf{Step 3.} We will show that (i) holds for $r=2$ and $j\in [3,m-1]\cap\mathbb Z$. This follows lines in Step 1. We write $\xi'_i=\xi'_{i,1,\lambda+\varepsilon_2,j-1}, i\in [d]$ for brevity. Note that $j-1\in [2,m-1]\cap\mathbb Z$. Then applying Theorem \ref{thm:r=1} to $\lambda+\varepsilon_2$ and $K\ge K_2(\lambda)=K_1(\lambda+\varepsilon_2)$, we obtain
	\[
	\mathbb P\left(\xi'_i>\delta\big| \mathcal F_{t}\right) \le \exp\{ -K^{1/8} \} \ \text{ for all } \delta\ge\delta_{1},
	\]
	playing the role of \eqref{eq:xi'i}.
	Consequently, for $a=K^{1/8}$,
    \begin{align*}
    & \mathbb E \left(e^{a\xi'_i}\Big| \mathcal F_{t}\right)
    =\int_{0}^{1} \mathbb P\left(\xi'_i>z\big| \mathcal F_{t}\right)ae^{az} dz\\
    \le & e^{a\delta_1}-1+\int_{\delta_r}^{1} e^{-K^{1/8}}ae^{az} dz \le e^{a\delta_1}-1+e^{a-K^{1/8}}=e^{a\delta_1}.
    \end{align*}
     Hence, it holds that
    \[
     \mathbb E \left(e^{a\left(\xi'_i-\delta\right)}\Big| \mathcal F_{t}\right)
     \le e^{-K^{1/8}(\delta-\delta_1)}.
    \]
    Then, we have
    \[
    \mathbb P\left(\frac1d \sum_{i=1}^{d}\xi'_i>\delta\right)
    \le \left( e^{-\beta K^{1/8}(\delta-\delta_1)}\right)^d,
    \]
    as the counterpart of \eqref{eq:4.3-1}.
	Recall that $\delta_2-\delta_1=\Delta_1=\frac{9\log K}{\beta K^{1/8}}$, thus for $\delta\ge\delta_2$,
	\[
	\mathbb P\left(\frac1d \sum_{i=1}^{d}\xi'_i>\delta\right)
	\le \left( K^{-9}e^{-\beta K^{1/8}(\delta-\delta_2)}\right)^d.
	\]
    Consequently,
    \[
    \mathbb{P}(\xi_{2}>\delta)
    \le \sum_{d=K}^{\infty}\Big(K^{-2}e^{-\beta K^{1/8}(\delta-\delta_{2})}\Big)^d+e^{-c_1},
    \]
    as the counterpart of \eqref{eq:xi-r+1-delta}.
    With estimates similar to \eqref{eq:3/2} and \eqref{eq:1/2} for $r=2$, we conclude that for $K\ge K_2(\lambda)$,
	\[
	\mathbb{P}(\xi_{2}>\delta)
	\le 2 e^{-c_{1}(\delta-\delta_2)}
	\] 
	for all $\delta\ge\delta_2,$
	completing the proof.
\end{proof}

\subsection{Proof of Theorem \ref{thm:1.1} }

Recall that $v\in V_{2N}$ is $\lambda$-open if $\left| \eta^{V_{2N}}(v) \right|\le\lambda$, i.e., $(V_{2N},\lambda,0)$-open. Define
$
\tilde Y_{P,r,\lambda}:=\sum_{u\in \mathcal T_{P,r}}\theta_P(u)1_{\{u \text{ is tame and } \lambda\text{-open}\}},
$ and
\[
\tilde \xi_{r,\lambda,j,B}:=\max\big\{\tilde Y_{P,r,\lambda}: P\in \mathcal P_j(B)\big\}\ \text{ for all }  B\in\mathrm{END}_j.
\]
For $j\in [3,m-1]\cap\mathbb Z$ and $2\le r\le j-1$, let $K\ge K_{r}(\lambda)$. Applying Theorem \ref{thm:xi-bound} to $V=V_{2N}$, $\bar\alpha\equiv 0$ and $B\in\mathrm{END}_j$, we get
	\begin{equation}\label{eq:tilde-xi-bound}
	\mathbb{P}\left(\tilde\xi_{r,\lambda,j,B}>\delta\right)
	\le 2 e^{-c_{r-1}(\delta-\delta_r)} \ \text{ for all } \delta\ge\delta_{r+1},
	\end{equation}
recalling \eqref{eq:beta-c_r}, \eqref{eq:delta} for the definition of $c_r, \delta_r$.
Recall $\mathcal{P}^{\kappa, \delta, K}$ from \eqref{eq:kap-delta-K}.
For each $P\in\mathcal{P}^{\kappa, \delta, K}$, let $\big\{P^{(i)}:i\in[d_0]\big\}$ be the child-paths of $P$ in $\mathcal{SL}_{m-1}$ from Proposition \ref{prop:tree}. Recall that $L(u)$ is the depth of $u$ with $L(\rho)=0$. For a sub-path $u$ of $P^{(i)}$ in $\mathcal T_P$, denote by $L_i(u):=L(u)-1$ the level of $u$ in $\mathcal T_{P^{(i)}}$. By Lemma \ref{lem:untame-flow},
\begin{equation*}
\sum_{i=1}^{d_0}\sum_{u: 0 \leq L_i(u) \leq m-2}\frac{1}{d_0} \theta_{P^{(i)}}(u) 1_{\{u \text{ is untamed}\}}= \sum_{u: 1 \leq L(u) \leq m-1} \theta_{P}(u) 1_{\{u \text{ is untamed}\}}\leq 2 \delta m.
\end{equation*}
This implies that there is at least one child-path $P^{(i_0)}$ such that
\begin{equation}\label{eq:thm1-untamed}
\sum_{u: 0 \leq L_{i_0}(u) \leq m-2} \theta_{P^{(i_0)}}(u) 1_{\{u \text{ is untamed}\}}\leq 2 \delta m.
\end{equation}
Thus if there is a $\lambda$-open path $P$ in $\mathcal{P}^{\kappa, \delta, K}$, there would exist a $\lambda$-open path $\tilde P$ in $\mathcal P_{m-1}(B)$ for some $B\in\mathrm{END}_{m-1}$ such that \eqref{eq:thm1-untamed} holds with $P^{(i_0)}$ replaced with $\tilde P$ and $L_{i_0}(u)$ replaced with $\tilde L(u)$, the depth of $u$ in $\mathcal T_{\tilde P}$. Note that if $\tilde P$ is $\lambda$-open, then all the sub-paths are $\lambda$-open, which leads to
\begin{align*}
m-1
&=\sum_{u: 0 \leq\tilde L(u) \leq m-2} \theta_{\tilde P}(u) 1_{\{u \text{ is } \lambda\text{-open}\}}\\
&=\sum_{r=0}^{m-2} \tilde Y_{\tilde P,r,\lambda}+\sum_{u: 0 \leq \tilde L(u) \leq m-2} \theta_{\tilde P}(u) 1_{\{u \text{ is untamed}\}}
\le \sum_{r=0}^{m-2}\tilde \xi_{r,\lambda,m-1,B}+2\delta m.
\end{align*}
By the above inequality, in order to prove that for some $\delta>0, K>0$,
\begin{equation}\label{eq:P-kap-delta-K}
\lim _{N \rightarrow \infty}\mathbb{P}\big( P \text { is } \lambda\text{-open for some } P \in \mathcal{P}^{\kappa, \delta, K}  \big)=0,
\end{equation}
it is sufficient to show that there exists $\delta>0, K(\lambda,\delta)\in (0,\infty)$ such that for $K\ge K(\lambda,\delta)$,
\begin{equation}\label{eq:sum-r}
\lim_{m\rightarrow\infty}\mathbb P\left( \sum_{r=0}^{m-2}\tilde \xi_{r,\lambda,m-1,B}\ge m-1-2\delta m \text{ for some } B\in\mathrm{END}_{m-1} \right)=0.
\end{equation}

Recall that $\{\varepsilon_r: r\ge 0\}$ is set in \eqref{eq:epsilon} and $K_r(\lambda)=K_0\left(\lambda+\sum_{i=1}^{r}\varepsilon_i\right)$ is defined in \eqref{eq:Kr}. Noting that $\sum_{i=1}^{\infty}\varepsilon_i<\infty$ and $K_0(\cdot)$ is a increasing function, one has $K_{\infty}(\lambda):=K_0\left(\lambda+\sum_{i=1}^{\infty}\varepsilon_i\right)<\infty$.
Recall \eqref{eq:Delta} for the definition of $\Delta_r$. There exists $K(\lambda,\delta)\ge K_{\infty}(\lambda)$ such that the following inequality holds for all $K\ge K(\lambda,\delta)$,
\[
\sum_{r=1}^{\infty}\Delta_r=\frac{9\log K}{\beta K^{1/8}}+\sum_{r=1}^{\infty}\frac{\log(1+2c_{r})+9\beta^{-1}\log K}{c_{r}}\le \delta.
\]
Consequently, for $K\ge K(\lambda,\delta)$, we have $\delta_r\le \frac12+\delta$ for all $r\ge 0$. As $m\ge \delta^{-1}$,
\begin{equation}\label{eq:sum-r-}
\sum_{r=0}^{m-2}\left(\delta_r+\frac{1}{2^{r+2} } \left(1-8\delta\right)m\right)
< m-1-2\delta m,
\end{equation}
where we set $\delta\in(0,\frac18)$. Since $\kappa N<K^{m+2}$, there are at most $(K^6/\kappa)^2$ boxes in $\mathrm{END}_{m-1}$.
By a union bound and \eqref{eq:sum-r-}, for $K\ge K(\lambda,\delta)$ and $m\ge \delta^{-1}$,
\begin{equation}\label{eq:sum-r-2}
\mathbb P\left( \sum_{r=0}^{m-2}\tilde \xi_{r,\lambda,m-1,B}\ge m-1-2\delta m \text{ for some } B\in\mathrm{END}_{m-1} \right)
\le \frac{K^{12}}{\kappa^2}\sum_{r=0}^{m-2}p_{m,r},
\end{equation}
where
$
p_{m,r}=\mathbb P\left(\tilde \xi_{r,\lambda,m-1,B}>\delta_r+\frac{1}{2^{r+2} } \left(1-8\delta\right)m\right).
$
As $m\ge \frac{8}{1-8\delta}$, for $r=0,1$, we have $\frac{1}{2^{r+2} } \left(1-8\delta\right)m\ge 1$, implying $p_{m,r}=0$. Applying \eqref{eq:tilde-xi-bound} to $j=m-1$, then for all $2\le r\le m-2$,
\[
p_{m,r}\le 2\exp\left\{ -\frac18\left(1-8\delta\right)\left(\frac{\beta K}{2}\right)^{r-1}m \right\}.
\]
Furthermore, $\beta K/2\ge 2^{22}$ implies that $\frac18\left(\frac{\beta K}{2}\right)^{r-1}\ge r$ for all $r\ge 2$. Thus,
\begin{equation}\label{eq:decay-rate}
\sum_{r=0}^{m-2}p_{m,r}\le 2\sum_{r=2}^{m-2}e^{ -\left(1-8\delta\right)mr}\le \frac{2}{e^{\left(1-8\delta\right)m}-1},
\end{equation}
which converges to $0$ as $m\rightarrow\infty$. Combined with \eqref{eq:sum-r-2}, this implies \eqref{eq:sum-r}.

Especially, set $\delta=\frac{1}{16}$.
Then for $\lambda\ge\lambda_0$, there exists $K(\lambda)=e^{b\lambda^2}\ge K(\lambda,1/16)$ for some $b=b(\lambda_0)>0$ such that for $\delta=\frac{1}{16}$ and $K\ge K(\lambda)$, \eqref{eq:P-kap-delta-K} holds. Let 
\begin{equation}\label{eq:eps-lambd}
\epsilon(\lambda)=\frac{1}{16K(\lambda)^2k(\lambda)},
\end{equation}
then $\mathcal{P}_{N}^{\kappa,\epsilon(\lambda)}=\mathcal{P}^{\kappa, 1/16, K(\lambda)}$ and \eqref{eq:P-kap-delta-K} implies \eqref{eq:complement-event}. We conclude the proof of Theorem \ref{thm:1.1}.

\bigskip
\noindent
{\bf Acknowledgments:}
This work is supported by NSF of China 11771027. We would like to thank Jian Ding for his suggestions and helpful discussions.
\bibliographystyle{abbrv}
\bibliography{GFFref}

\end{document}